\newtheorem{thm}{Theorem}[subsection]
\newtheorem{prop}[thm]{Proposition}
\newtheorem{cor}[thm]{Corollary}
\newtheorem{rem}[thm]{Remark}
\numberwithin{equation}{subsection}
\newcommand{\npmod}[1]{\!\!\pmod{#1}}
\newcommand{\nnpmod}[1]{\!\!\!\!\pmod{#1}}
\newenvironment{proof}{\par\noindent{\bf[Proof]}}%
                      {$\blacksquare$\noindent\par\vspace{0.5\baselineskip}}
                      {$\blacksquare$\par\noindent}
\font\b=cmr10 scaled \magstep4
\def\bigzerou{\smash{\lower0.7ex\hbox{\b 0}}}
\def\bigastl{\smash{\lower0.7ex\hbox{\b *}}}
\def\bigastu{\smash{\lower2.7ex\hbox{\b *}}}
\def\addots{\mathinner
    {\mkern1mu\raise1pt\hbox{.}\mkern2mu
    \raise4pt\hbox{.}\mkern2mu\raise7pt\vbox{\kern7pt\hbox{.}}\mkern1mu}}
\title{Regular irreducible representations \\
%       of reductive groups of type $A, B$ and $C$\\
        of classical groups\\
       over finite quotient rings}
\author{Koichi Takase
            \footnote{The author is partially supported by 
                      JSPS KAKENHI Grant Number JP 16K05053,\newline
                      MSC2010: primary 20C15, secondary 20C33,\newline
                     Keywords: Weil representation, reductive group,
                     finite ring }}
\date{}
\begin{document}   

%\begin{frontmatter}

%\title{Regular irreducible representations \\of a hyperspecial
%       compact group}
%\title{Regular irreducible representations \\
%       of reductive groups over finite quotient rings}
%\author{Koichi Takase
%            \footnote{The author is partially supported by 
%                      JSPS KAKENHI Grant Number JP 16K05053}}
%\address{Sendai 980-0845, Japan\\
%         Miyagi University of Education\\
%         Department of Mathematics}
%
%\begin{abstract}
%\input{abstract.tex}
%\end{abstract}
%
%\end{frontmatter}

\maketitle

%\begin{linenumbers}

\begin{abstract}
A parametrization of irreducible representations 
associated with a regular adjoint orbit of
a classical group over finite quotient rings of the ring of integer of
a non-dyadic
non-archimedean local field 
is presented. The parametrization is given by means of (a subset of) 
the character group of the centralizer of a representative of the 
regular adjoint orbit. 
Our method 
is based upon Weil representations over finite fields. 
More explicit parametrization in terms of tamely ramified extensions of
the base field is given for the general linear group, the special
linear group, the symplectic group and the orthogonal group.

\end{abstract}

\section{Introduction}\label{sec:introduction}

Let $F$ be a non-dyadic non-archimedean local field. 
The integer ring of $F$ is denoted by $O=O_F$ with the maximal
ideal $\frak{p}=\frak{p}_F$ generated by $\varpi$. The residue class field 
$\Bbb F=O/\frak{p}$ is a finite field of odd characteristic with 
$q$ elements.
For an integer $r>0$ put $O_r=O/\frak{p}^r$ so that $\Bbb F=O_1$. 

Let $G$ be a connected reductive group scheme over $O$. The problem
which we will consider in this paper is to determine the set 
$\text{\rm Irr}(G(O_r))$ of the equivalence classes of the 
irreducible complex representations of the finite group $G(O_r)$. 

This problem in the case $r=1$, that is the representation theory of
the finite reductive group $G(\Bbb F)$, has been studied extensively,
starting 
from Green \cite{Green1955} concerned with $GL_n(\Bbb F)$ to the
decisive paper of Deligne-Lusztig \cite{Deligne-Lusztig1976}. 
%See Carter \cite{Carter1985} for the details.

On the other hand, the study of the representation
theory of the finite group $G(O_r)$ with $r>1$ is less complete. 
%The preceding paper 
%\cite{Takase2016} treats this problem in the case of $G=GL_n$. 
The systematic studies are done mainly in the case of $G=GL_n$ 
\cite{Hill1993,Hill1994,Hill1995-1,Hill1995-2}, 
\cite{Krakovski2018}, \cite{Stasinski-Stevens2017}, 
\cite{Takase2016}. 
%The purpose of this paper is to show that the method used in 
%\cite{Takase2016} 
%works for greater range of reductive group schemes over $O$.

Shechter \cite{Shechter2019} studies the problem when $G$ is
a symplectic group or a special orthogonal group with $r>1$.
We have a canonical isomorphism
\begin{equation}
  K_{r-1}(O_r)\,\tilde{\to}\,\frak{g}(\Bbb F)
\label{eq:canonical-isom-of-k(r-1)-to-g(f)}
\end{equation}
where $K_{r-1}(O_r)$ is the kernel of the canonical group homomorphism
$G(O_r)\to G(O_{r-1})$ and $\frak{g}$ is the Lie algebra scheme of
$G$. Take an irreducible representation $\pi$ of $G(O_r)$. Then, by
Clifford's theorem, the restriction $\pi|_{K_{r-1}(O_r)}$ is a
multiple of the sum over a single $G(\Bbb F)$-conjugacy class of characters of 
$K_{r-1}(O_r)$ which determines, by the isomorphism
\eqref{eq:canonical-isom-of-k(r-1)-to-g(f)} combined with the
invariant bilinear form on $\frak{g}(\Bbb F)$, an adjoint 
$G(\Bbb F)$-orbit $\Omega$ in $\frak{g}(\Bbb F)$. So we have a
correspondence $\pi\mapsto\Omega$ of the set of the equivalence
classes of the irreducible representations of $G(O_r)$ to the set of
the adjoint $G(\Bbb F)$-orbits in $\frak{g}(\Bbb F)$. 
Shechter \cite{Shechter2019} constructs (when $G$ is a symplectic
group or a special orthogonal group) the irreducible representations
of $G(O_r)$ ($r>1$) which correspond to an adjoint $G(\Bbb F)$-orbit
$\Omega$ consisting of regular elements of $\frak{g}(\Bbb F)$ 
(see section \ref{sec:regularity-of-lie-element} for the definition of
  a regular Lie element). 

In this paper, we will treat more generally any smooth $O$-group
scheme $G$ with Lie algebra scheme $\frak{g}$ 
which satisfies three fundamental conditions I), II) and III)
presented in subsection \ref{subsec:fundamental-setting}. We will
also assume that $G$ is an $O$-group subscheme of $GL_n$
suitably and hence $\frak{g}$ is a closed $O$-subscheme of 
$\frak{gl}_n$ the Lie algebra scheme of $GL_n$. 
The condition
II) gives explicitly the isomorphism 
\begin{equation}
 K_l(O_r)\,\tilde{\to}\,\frak{g}(O_{l^{\prime}})
\label{eq:canonical-isom-of-k(l)-to-g(l-prime)}
\end{equation}
where $l^{\prime}=l$ if $r=2l$ is even and $l^{\prime}=l-1$ if
$r=2l-1>1$ is odd, and $K_l(O_r)$ is the kernel of the canonical group
homomorphism $G(O_r)\to G(O_l)$ which is surjective due to the formal
smoothness \cite[p.111, Cor. 4.6]{Demazure-Gabriel1970}. The condition
I) guarantees the existence of an invariant bilinear form on 
$\frak{g}(O_{l^{\prime}})$. 
Then the restriction of an irreducible representation
of $G(O_r)$ to $K_l(O_r)$ determines an adjoint 
$G(O_{l^{\prime}})$-orbit in $\frak{g}(O_{l^{\prime}})$ by Clifford's
theorem as above. Let 
$\Omega^{\prime}\subset\frak{g}(O_{l^{\prime}})$ be the adjoint 
$G(O_{l^{\prime}})$-orbit of 
$\beta\npmod{\frak{p}^{l^{\prime}}}\in\frak{g}(O_{l^{\prime}})$ with 
$\beta\in\frak{g}(O)$. Our main result establishes a bijection between
  the set of the equivalence classes of the irreducible
  representations of $G(O_r)$ corresponding to $\Omega^{\prime}$ and
  a subset of the character group of $G_{\beta}(O_r)$, where 
$G_{\beta}$ is the centralizer of $\beta$ in $G$, under the assumptions 
\begin{enumerate}
\item $G_{\beta}$ is smooth over $O$, and 
\item the characteristic polynomial of 
      $\beta\npmod{\frak p}\in M_n(\Bbb F)$ is its minimal polynomial 
      where we put 
      $\frak{g}(\Bbb F)\subset\frak{gl}_n(\Bbb F)=M_n(\Bbb F)$.
\end{enumerate}
Due to the second condition, the centralizer $GL_{n,\beta}$ of 
$\beta\in\frak{gl}_n(O)$ in $GL_n$ is commutative (see subsection 
\ref{subsec:detailed-description-of-the-case-gl-n}), and hence 
$G_{\beta}$ is a commutative $O$-group scheme. See section 
\ref{sec:regularity-of-lie-element} for relations between the
second condition and the regularity (or the smooth regularity, 
according to Springer \cite{Springer1966}) of Lie elements. 

Let us consider the inverse image of an adjoint $G(\Bbb F)$-orbit 
$\Omega$ in $\frak{g}(\Bbb F)$ under the canonical surjection 
$\frak{g}(O_{l^{\prime}})\to\frak{g}(\Bbb F)$. The inverse image
decomposes into several adjoint $G(O_{l^{\prime}})$-orbits consisting
of the same number of elements. Then the set of the equivalence
classes of irreducible representations of $G(O_r)$ corresponding to
$\Omega$ are divided into several subclasses corresponding to these 
adjoint $G(O_{l^{\prime}})$-orbits. 
In other words, 
our main result, applied to the case where $G$ is a symplectic group
or a special orthogonal group, divides Shechter's irreducible
representations into several subclasses and gives a parametrization of
the irreducible representations in each subclasses. 
In particular we will construct the irreducible representations of 
Shechter \cite{Shechter2019} 
%So, our result applied to the case where $G$ is a symplectic group
%or a special orthogonal group will reveal a fine structure of the irreducible
%representations constructed by Shechter \cite{Shechter2019}. 
(see subsection \ref{subsec:discussionon-shechter-results}).

The main results of this paper are Theorem \ref{th:main-result}
 and its applications to the
general linear group $GL_n$ (Theorem \ref{th:main-result-for-gl-n}, 
Theorem
\ref{th:main-result-for-gl-n-related-with-tamely-ramified-extension}), 
to the special linear group $SL_n$ (Theorem
\ref{th:shintani-gerardin-parametrization-for-sl-n}), to the
symplectic group $Sp_{2n}$ (Theorem
\ref{thm:shintani-gerardin-type-parametrization-for-sp-2n}) and to the
orthogonal groups $SO_{2n}$ (Theorem
\ref{thm:shintani-gerardin-type-parametrization-for-even-so}) and 
$SO_{2n+1}$ (Theorem
\ref{thm:shintani-gerardin-type-parametrization-for-odd-so}). 

The situation is quite simple and well known when $r$ is even, and
almost all of this 
paper is devoted to study the case of $r=2l-1$ being odd. In this case
Clifford theory requires us to construct an irreducible representation 
of $G_{\beta}(O_r)\cdot K_{l-1}(O_r)$. 
To construct an irreducible representation of
$K_{l-1}(O_r)$, 
we will use Schr\"odinger representation of the Heisenberg group
associated with a symplectic space over finite field which is
associated with $\beta$ 
(Proposition \ref{prop:another-expression-of-pi-beta-psi}). 
Then we will use Weil representation to
extend the irreducible representation of $K_{l-1}(O_r)$ to a 
projective representation of $G_{\beta}(O_r)\cdot K_{l-1}(O_r)$. 
Finally we will prove that the Schur multiplier associated with the
projective representation is trivial 
(Proposition \ref{prop:vanishing-of-c-beta-rho}) to get the required
irreducible linear representation of $G_{\beta}(O_r)\cdot K_{l-1}(O_r)$. 

In the case of $G=GL_n$, the extendability of the
irreducible representation of $K_{l-1}(O_r)$ to that of 
$G_{\beta}(O_r)\cdot K_{l-1}(O_r)$ is proved by 
\cite{Krakovski2018}, \cite{Stasinski-Stevens2017}. 
Based upon this result, we will prove
the triviality of the Schur multiplier for general $G\subset GL_n$
under the condition that the reduction modulo $\frak{p}$ of the
characteristic polynomial of 
$\beta\in\frak{g}(O)\subset\frak{gl}_n(O)$ is the minimal polynomial
of $\beta\npmod{\frak p}\in M_n(\Bbb F)$ (in this case 
$\beta\npmod{\frak p}\in\frak{gl}_n(\Bbb F)$ is regular with
respect to $GL_n$ over $\Bbb F$, 
see subsection \ref{subsec:detailed-description-of-the-case-gl-n}). 

Note that Weil representation over a finite field is a ``genuine'' linear
representation, and the definition of the Schur multiplier is
independent of the theory of Weil representation. 
The definition and 
fundamental properties of the Schur multiplier will be
discussed in section \ref{sec:schur-multiplier}.

%After proving the
%triviality of the Schur multiplier, we complete the proof of Theorem 
%\ref{th:main-result} when $r$ is odd. 

{\bf Notations}
The multiplicative group of the complex
numbers of absolute value one is denoted by $\Bbb C^1$. 
The character group of a finite abelian group $\mathcal{G}$,
 that is the multiplicative group consisting of the group
 homomorphisms of $\mathcal{G}$ to $\Bbb C^1$, 
is denoted by $\mathcal{G}\sphat$. 

For any $O$-scheme $X$, the image of $x\in X(O)$ under the canonical
mapping $X(O)\to X(O_l)$ is denoted by $x_l=x\pmod{\frak{p}^l}$. On the
other hand, when $K=\Bbb F$ or $F$, the image of $x\in X(O)$ under the
canonical mapping $X(O)\to X(K)$ is denoted by $\overline x$. Any
notational conflict between the cases $K=\Bbb F$ and $K=F$ may not
occur in this paper.

{\bf Acknowledgment} The author express his thanks to the referee who
read quite carefully the submitted manuscript and gave suggestions to
improve greatly this article. Particularly a critical step of the
proof of Proposition \ref{prop:vanishing-of-c-beta-rho} is suggested
by the referee. 

\section{Main results}\label{sec:main-results}

\subsection[]{Fundamental assumptions}\label{subsec:fundamental-setting}
Let $G\subset GL_n$ be a closed smooth $O$-group subscheme, and
$\frak{g}$ the Lie algebra scheme of $G$ which is a closed affine
$O$-subscheme of $\frak{gl}_n$ the Lie algebra scheme of $GL_n$. We 
  assume that the fibers $G{\otimes}_OK$ ($K=F$ or $K=\Bbb F$) are
  non-commutative algebraic $K$-groups (that is smooth $K$-group
  schemes).  

For any $O$-algebra $R$ (in this paper, an $O$-algebra means an
commutative unital $O$-algebra) 
the set of the $R$-valued points $\frak{gl}_n(R)$ is
  identified with the $R$-Lie algebra of square matrices $M_n(R)$ of size $n$
  with Lie bracket $[X,Y]=XY-YX$, and 
the group of $R$-valued points $GL_n(R)$ is
identified with the matrix group
$$
 GL_n(R)=\{g\in M_n(R)\mid \det g\in R^{\times}\}
$$  
where $R^{\times}$ is the multiplicative group of $R$. Hence
$\frak{g}(R)$ is identified with a matrix Lie subalgebra of $\frak{gl}_n(R)$
and $G(R)$ is identified with a matrix subgroup of $GL_n(R)$. Let 
$$
 B:\frak{gl}_n{\times}_O\frak{gl}_n\to\Bbb A_O^1
$$
be the trace form on $\frak{gl}_n$, that is $B(X,Y)=\text{\rm tr}(XY)$
for all $X,Y\in\frak{gl}_n(R)$ with any $O$-algebra $R$. The
smoothness of $G$ implies that we have a canonical isomorphism
$$
 \frak{g}(O)/\varpi^r\frak{g}(O)\,\tilde{\to}\,
 \frak{g}(O_r)=\frak{g}(O){\otimes}_{O}O_r
$$
(\cite[Chap.II, $\S 4$, Prop.4.8]{Demazure-Gabriel1970}) and that the
 canonical group homomorphism $G(O)\to G(O_r)$ is
 surjective due to the formal smoothness 
\cite[p.111, Cor. 4.6]{Demazure-Gabriel1970} and the canonical
isomorphism 
$O\,\tilde{\to}\,\lim_{\stackrel{\longleftarrow}{r}}O_r$. 
For any $0<l<r$, let us denote by $K_l(O_r)$ the kernel of the canonical
 group homomorphism $G(O_r)\to G(O_l)$ which is surjective.

Throughout this paper, 
we will assume the following three conditions;
\begin{itemize}
\item[I)] $B:\frak{g}(\Bbb F)\times\frak{g}(\Bbb F)\to\Bbb F$ is
  non-degenerate, 
\item[II)] for any integers $r=l+l^{\prime}$ with 
           $0<l^{\prime}\leq l$, we have a group isomorphism
$$
 \frak{g}(O_{l^{\prime}})\,\tilde{\to}\,K_l(O_r)
$$
           defined by 
$X\npmod{\frak{p}^{l^{\prime}}}\mapsto1+\varpi^lX\npmod{\frak{p}^r}$,
\item[III)] if $r=2l-1\geq 3$ is odd, then we have a mapping
$$
 \frak{g}(O)\to K_{l-1}(O_r)
$$
defined by 
$X\mapsto(1+\varpi^{l-1}X+2^{-1}\varpi^{2l-2}X^2)\npmod{\frak{p}^r}$.
\end{itemize}
The condition I) implies that 
$B:\frak{g}(O_l)\times\frak{g}(O_l)\to O_l$ 
is non-degenerate for all $l>0$, and so 
$B:\frak{g}(O)\times\frak{g}(O)\to O$ is also non-degenerate. 
The mappings of the conditions II) and III) from Lie algebras to
groups can be regarded as truncations of the exponential mapping. 
Note that the mapping of the condition III) is not a group
homomorphism. This mapping plays an important role when we will 
analyze in subsection \ref{subsec:k-l-1-as-group-extension} 
the structure of $K_{l-1}(O_r)$ where $r=2l-1$ is odd. We will
use it also in the proof of Proposition
\ref{prop:fundamental-bijection-of-parameter}.

\subsection[]{Clifford theory}\label{subsec:clifford-theory-in-general}
Fix a continuous unitary character $\tau$ of the additive group $F$ such
that 
$$
 \{x\in F\mid\tau(xO)=1\}=O.
$$
We will fix an integer $r\geq 2$ and put 
$r=l+l^{\prime}$ with the smallest integer $l$ such that 
$0<l^{\prime}\leq l$. In other words
$$
 l^{\prime}=\begin{cases}
             l&:r=2l,\\
             l-1&:r=2l-1.
            \end{cases}
$$
Take a $\beta\in\frak{g}(O)$ and define a
character $\psi_{\beta}$ of the finite abelian group 
$K_l(O_r)$ by 
$$
 \psi_{\beta}((1+\varpi^lX)\nnpmod{\frak{p}^r})
 =\tau(\varpi^{-l^{\prime}}B(X,\beta))
 \qquad
 (X\in\frak{g}(O)).
$$
Then $\beta\npmod{\frak{p}^{l^{\prime}}}\mapsto\psi_{\beta}$ 
gives an isomorphism of the
additive group $\frak{g}(O_{l^{\prime}})$ onto the character
group $K_l(O_r)\sphat$. For any 
$g_r=g\npmod{\frak{p}^r}\in G(O_r)$ with $g\in G(O)$, we have
\begin{equation}
 \psi_{\beta}(g_r^{-1}hg_r)
 =\psi_{\text{\rm Ad}(g)\beta}(h)
 \qquad
 (h\in K_l(O_r)).
\label{eq:conjugate-of-psi-beta}
\end{equation}
So the stabilizer of $\psi_{\beta}$ in $G(O_r)$ is 
$$
 G(O_r,\beta)
 =\left\{g_r\in G(O_r)\bigm|
         \text{\rm Ad}(g)\beta\equiv\beta
                    \pmod{\frak{p}^{l^{\prime}}}\right\}
$$
which is a subgroup of $G(O_r)$ containing $K_{l^{\prime}}(O_r)$.

Now let us denote by 
$\text{\rm Irr}(G(O_r)\mid\psi_{\beta})$ 
(resp. $\text{\rm Irr}(G(O_r,\beta)\mid\psi_{\beta})$) the set of the
isomorphism classes of the irreducible complex representations $\pi$ of 
$G(O_r)$ (resp. $\sigma$ of $G(O_r,\beta)$) such
that 
$$
 \langle\psi_{\beta},\pi\rangle_{K_l(O_r)}
 =\dim_{\Bbb C}\text{\rm Hom}_{K_l(O_r)}(\psi_{\beta},\pi)
 >0
$$
(resp. 
$\langle\psi_{\beta},\sigma\rangle_{K_l(O_r)}>0$). Then we have
\begin{enumerate}
\item $\text{\rm Irr}(G(O_r))
       =\bigsqcup_{\beta\npmod{\frak{p}^{l^{\prime}}}}
           \text{\rm Irr}(G(O_r)\mid\psi_{\beta})$
  where $\bigsqcup_{\beta\npmod{\frak{p}^{l^{\prime}}}}$ 
  is the disjoint union over the
  representatives $\beta\npmod{\frak{p}^{l^{\prime}}}$ 
  of the $\text{\rm Ad}(G(O_{l^{\prime}}))$-orbits in 
  $\frak{g}(O_{l^{\prime}})$,
\item a bijection of 
      $\text{\rm Irr}(G(O_r,\beta)\mid\psi_{\beta})$ onto 
      $\text{\rm Irr}(G(O_r)\mid\psi_{\beta})$ is given by 
$$
 \sigma\mapsto\text{\rm Ind}_{G(O_r,\beta)}^{G(O_r)}\sigma.
$$
\end{enumerate}
The first statement is clear from the fact that 
$K_l(O_r)\,\tilde{\to}\,\frak{g}(O_{l^{\prime}})$ is an commutative
group and the relation \eqref{eq:conjugate-of-psi-beta}. For the
second statement, see \cite[Th.6.11]{Isaacs1976}.

So our problem is reduced to give a good parametrization of the 
   set 
$$
 \text{\rm Irr}(G(O_r,\beta)\mid\psi_{\beta}).
$$

\subsection[]{Main theorem}\label{subsec:first-main-result}
For any $\beta\in\frak{g}(O)$, let us denote
by $G_{\beta}=Z_G(\beta)$ the centralizer of
$\beta$ in $G$  which is a closed $O$-group subscheme of $G$. The Lie
algebra $\frak{g}_{\beta}=Z_{\frak g}(\beta)$ of $G_{\beta}$ is 
a closed $O$-subscheme of $\frak{g}$ such that
$$
 \frak{g}_{\beta}(R)
 =\{X\in\frak{g}(R)\mid [X,\overline\beta]=0\}
$$
for any $O$-algebra $R$ where $\overline\beta\in\frak{g}(R)$ is the
image of $\beta\in\frak{g}(O)$ under the canonical morphism 
$\frak{g}(O)\to\frak{g}(R)$. 

Now our main result is

\begin{thm}\label{th:main-result}
Take a $\beta\in\frak{g}(O)$ such that
\begin{enumerate}
\item $G_{\beta}$ is a smooth $O$-group scheme, and
\item the characteristic polynomial 
      $\chi_{\overline\beta}(t)=\det(t\cdot 1_n-\overline\beta)$ of 
      $\overline\beta\in\frak{g}(\Bbb F)\subset\frak{gl}_n(\Bbb F)$ is
  the minimal polynomial of $\overline\beta\in M_n(\Bbb F)$.
\end{enumerate}
Then 
we have a bijection $\theta\mapsto\sigma_{\beta,\theta}$ of the set
$$
 \left\{\theta\in G_{\beta}(O_r)\sphat\;\;\;
         \text{\rm s.t. $\theta=\psi_{\beta}$ on 
               $G_{\beta}(O_r)\cap K_l(O_r)$}\right\}
$$
onto $\text{\rm Irr}(G(O_r,\beta)\mid\psi_{\beta})$.
\end{thm}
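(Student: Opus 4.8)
The plan is to establish the bijection in three stages, reducing everything eventually to the case $r = 2l-1$ odd, since the even case $r = 2l$ is essentially immediate (there $l' = l$, so $G(O_r,\beta) = G(O_r)\cap\{g : \mathrm{Ad}(g)\beta \equiv \beta\}$ behaves like a twisted extension of $G_\beta(O_r)$ by $K_l(O_r)$, and $\psi_\beta$ already extends canonically). So I would first dispose of $r$ even, then concentrate on $r = 2l-1 \geq 3$, and finally handle the base case $r = 3$ (i.e. $l = 2$) on which the induction rests.

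For $r = 2l-1$ odd, the strategy is Clifford-theoretic descent through the chain $K_l(O_r) \subset G_\beta(O_r)\cdot K_{l-1}(O_r) \subset G(O_r,\beta)$. First I would analyze the group $K_{l-1}(O_r)$: using conditions II) and III) of subsection \ref{subsec:fundamental-setting}, it is a two-step nilpotent group, and the commutator pairing induces a symplectic (or alternating) form on $\frak{g}(\mathbb F) / \frak{g}_\beta(\mathbb F)$ via $B$ and $\beta$ — the radical of this form is exactly $\frak{g}_\beta(\mathbb F)$ by the non-degeneracy condition I). Thus $K_{l-1}(O_r)$, modulo the part where $\psi_\beta$ restricts, is a Heisenberg group over $\mathbb F$ attached to the symplectic space associated to $\beta$; its unique irreducible representation with central character $\psi_\beta$ is the Schrödinger representation $\pi_{\beta,\psi}$ of Proposition \ref{prop:another-expression-of-pi-beta-psi}. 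Next I would let $G_\beta(O_r)$ act: the conjugation action of $G_\beta(O_r)$ on $K_{l-1}(O_r)$ preserves $\psi_\beta$ (this follows from \eqref{eq:conjugate-of-psi-beta} since $\mathrm{Ad}(g)\beta = \beta$ for $g \in G_\beta$), hence preserves the isomorphism class of $\pi_{\beta,\psi}$, giving a projective representation of $G_\beta(O_r)\cdot K_{l-1}(O_r)$ extending $\pi_{\beta,\psi}$. The obstruction to linearizing it is a class in the Schur multiplier, and invoking Proposition \ref{prop:vanishing-of-c-beta-rho} — whose hypotheses are precisely conditions (1) and (2) of the theorem — this class vanishes, so we get an honest linear extension $\widetilde\pi$ of $\pi_{\beta,\psi}$ to $G_\beta(O_r)\cdot K_{l-1}(O_r)$, unique up to twisting by characters of $G_\beta(O_r)\cdot K_{l-1}(O_r)/K_{l-1}(O_r) \cong G_\beta(O_{l-1})$.

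With $\widetilde\pi$ in hand, the parametrization follows by a "stepping-up" argument. For each $\theta \in G_\beta(O_r)\widehat{\phantom{x}}$ agreeing with $\psi_\beta$ on $G_\beta(O_r)\cap K_l(O_r)$, I would first descend $\theta$ to a character of $G_\beta(O_r)\cdot K_l(O_r)$, check it is compatible with $\psi_\beta$ on $K_l(O_r)$, and form $\widetilde\pi \otimes (\text{that character})$ as a representation of $G_\beta(O_r)\cdot K_{l-1}(O_r)$ — here one must verify the twist lands in the right coset so that the tensor product is well-defined and still restricts to $\pi_{\beta,\psi}$ on $K_{l-1}(O_r)$; then induce from $G_\beta(O_r)\cdot K_{l-1}(O_r)$ up to $G(O_r,\beta)$ to obtain $\sigma_{\beta,\theta}$. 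Irreducibility of the induced representation comes from Mackey's criterion, using that $K_{l-1}(O_r)$ is normal in $G(O_r,\beta)$ and that distinct $G(O_r,\beta)$-conjugates of $\pi_{\beta,\psi}$ are inequivalent (again traceable to condition I) and the structure of the symplectic space). Conversely, any $\sigma \in \mathrm{Irr}(G(O_r,\beta)\mid\psi_\beta)$ restricted to $K_{l-1}(O_r)$ is $\pi_{\beta,\psi}$-isotypic, so $\mathrm{Hom}_{K_{l-1}(O_r)}(\widetilde\pi,\sigma)$ is an irreducible module for the appropriate twisted group algebra of $G(O_r,\beta)/K_{l-1}(O_r)$; since $G(O_r,\beta)/(G_\beta(O_r)\cdot K_{l-1}(O_r))$ is abelian (this needs checking — it should follow because $G(O_r,\beta)$ acts on $\frak{g}(O_{l'})/\frak{g}_\beta(O_{l'})$ through a commutative quotient by the smoothness of $G_\beta$ and condition (2)), Clifford theory for this abelian quotient forces $\sigma$ to be induced from a character-twist of $\widetilde\pi$, and one recovers $\theta$ uniquely. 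Injectivity and surjectivity of $\theta \mapsto \sigma_{\beta,\theta}$ then both come out of this analysis.

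The main obstacle, I expect, is the verification that $G(O_r,\beta)/(G_\beta(O_r)\cdot K_{l-1}(O_r))$ is abelian and that the relevant twisted group algebra is in fact untwisted — that is, that after using Proposition \ref{prop:vanishing-of-c-beta-rho} to kill the Schur multiplier of $G_\beta(\mathbb F)$, there is no residual cocycle coming from the larger group $G(O_r,\beta)$. This is where conditions (1) and (2) must be used most carefully: condition (2) (the characteristic polynomial of $\overline\beta$ equals its minimal polynomial) is what makes $G_\beta$ commutative and controls the passage from $G_\beta$ to its centralizer-of-centralizer, while condition (1) (smoothness of $G_\beta$) is what lets one lift points and identify $G_\beta(O_r)\cap K_l(O_r)$ with $\frak{g}_\beta(O_{l'})$ cleanly. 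A secondary technical point is the well-definedness of the twist $\widetilde\pi \otimes \theta$: one must pin down the Weil-representation normalization so that the "genuine linear" Weil representation and the twist combine to something independent of choices, which is exactly the subtlety flagged in subsection \ref{subsec:action-of-g-o-f-beta} of the introduction. Everything else — the Heisenberg structure of $K_{l-1}(O_r)$, the Mackey irreducibility criterion, the final counting — should be routine once these two points are settled.
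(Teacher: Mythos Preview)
Your overall architecture is close to the paper's, but there is one structural misconception that generates phantom difficulties. You treat $G_\beta(O_r)\cdot K_{l-1}(O_r)$ as a proper subgroup of $G(O_r,\beta)$ and plan to induce from the former to the latter; in fact these two groups are \emph{equal}. By definition $G(O_r,\beta)$ consists of those $g_r$ with $\mathrm{Ad}(g)\beta\equiv\beta\pmod{\mathfrak p^{l'}}$ (here $l'=l-1$), so its image in $G(O_{l-1})$ lies in $G_\beta(O_{l-1})$; smoothness of $G_\beta$ (condition (1)) makes $G_\beta(O_r)\to G_\beta(O_{l-1})$ surjective, whence $G(O_r,\beta)=G_\beta(O_r)\cdot K_{l-1}(O_r)$. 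Consequently your ``main obstacle'' --- showing that $G(O_r,\beta)/(G_\beta(O_r)\cdot K_{l-1}(O_r))$ is abelian and killing a residual cocycle on the larger group --- does not arise: that quotient is trivial, no induction or Mackey argument is needed at this stage, and $\sigma_{\beta,\theta}$ is defined directly on $G(O_r,\beta)$ by $\sigma_{\beta,\theta}(gh)=\theta(g)\cdot U_\psi(g)\circ\pi_\psi(h)$ (see \eqref{eq:description-of-sigma-beta-theta-for-odd-r}). For the same reason there is no induction on $r$ and no special base case $r=3$.

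There is also a genuine gap. Your extension $\widetilde\pi$ of the Schr\"odinger representation depends on a hidden choice: the Heisenberg central character is not $\psi_\beta$ alone but $\psi_{\beta,\rho}$ for some $\rho\in\frak{g}_\beta(\Bbb F)\sphat$, and the natural Clifford-theoretic parameter for $\mathrm{Irr}(G(O_r,\beta)\mid\psi_\beta)$ is therefore a \emph{pair} $(\theta,\rho)$ with $\theta=\psi_{\beta,\rho}$ on $G_\beta(O_r)\cap K_{l-1}(O_r)$ (this is Proposition \ref{prop:generalized-main-result-for-odd-r}). To obtain the bijection stated in the theorem --- indexed by $\theta$ alone, subject only to $\theta=\psi_\beta$ on $G_\beta(O_r)\cap K_l(O_r)$ --- one must show that $\rho$ is recoverable from $\theta$. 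The paper does this in Proposition \ref{prop:fundamental-bijection-of-parameter} via a direct computation using the truncated exponential of condition III): for $X\in\frak{g}_\beta(O)$ the element $(1+\varpi^{l-1}X+2^{-1}\varpi^{2l-2}X^2)\npmod{\frak p^r}$ lies in $G_\beta(O_r)\cap K_{l-1}(O_r)$, and evaluating $\theta$ there yields $\rho(\overline X)=\tau(-\varpi^{-l}B(X,\beta))\cdot\theta(\cdots)$. Your proposal does not address this step, and without it the map $\theta\mapsto\sigma_{\beta,\theta}$ is not even well-defined.
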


\begin{cor}\label{cor:main-result}
Under the conditions of Theorem \ref{th:main-result}, we have a bijection 
$$
 \theta\mapsto\delta_{\beta,\theta}
 =\text{\rm Ind}_{G(O_r,\beta)}^{G(O_r)}\sigma_{\beta,\theta}
$$
of the set
$$
 \left\{\theta\in G_{\beta}(O_r)\sphat\;\;\;
         \text{\rm s.t. $\theta=\psi_{\beta}$ on 
               $G_{\beta}(O_r)\cap K_l(O_r)$}\right\}
$$
onto $\text{\rm Irr}(G(O_r)\mid\psi_{\beta})$.
\end{cor}

Note that $G_{\beta}$ in Theorem \ref{th:main-result} is commutative
because the centralizer of $\beta$ in $GL_n$ is commutative due to 
the second condition of the theorem. (see subsection
\ref{subsec:detailed-description-of-the-case-gl-n}). 

The explicit description of the representation $\sigma_{\beta,\theta}$
is given by 
\eqref{eq:description-of-sigma-beta-theta-for-even-r} if $r$ is even,
and by 
\eqref{eq:description-of-sigma-beta-theta-for-odd-r} if $r$ is odd. 

The proof of this theorem in the case of even $r$ is quite simple and
well known. For the sake of completeness, we will include a proof 
in the next subsection. The proof for the case
of odd $r$ is given in section \ref{sec:weil-representation} after
studying certain Schur multiplier in section
\ref{sec:schur-multiplier}. 

%These proves show that 
%      the second condition in the theorem is required only in the case
%      of $r$ being odd. The second condition is related with the
%      smooth regularity of $\beta\in\frak{g}(O)$ as presented in the
%      next subsection. 

\subsection[]{Proof of Theorem \ref{th:main-result} for even $r$}
\label{subsec:proof-of-main-result-for-even-r}
Assume that $r=2l$ is even so that $l^{\prime}=l$. Since $G_{\beta}$
is a smooth $O$-group scheme, the canonical map 
$G_{\beta}(O_r)\to G_{\beta}(O_l)$ is surjective. Then we have 
$$
 G(O_r,\beta)=G_{\beta}(O_r)\cdot K_l(O_r)
$$
where $G_{\beta}(O_r)$ is a commutative finite group. 
Let $\theta$ be a character of $G_{\beta}(O_r)$ such that 
$\theta=\psi_{\beta}$ on $G_{\beta}(O_r)\cap K_l(O_r)$. Then we have 
an one-dimensional
representation $\sigma_{\beta,\theta}$ of $G(O_r,\beta)$ defined by 
\begin{equation}
 \sigma_{\beta,\theta}(gh)=\theta(g)\cdot\psi_{\beta}(h)
 \qquad
 (g\in G_{\beta}(O_r), h\in K_l(O_r)).
\label{eq:description-of-sigma-beta-theta-for-even-r}
\end{equation}
Then $\theta\mapsto\sigma_{\beta,\theta}$ is an injection of the set
$$
 \left\{\theta\in G_{\beta}(O_r)\sphat\;\;\;
         \text{\rm s.t. $\theta=\psi_{\beta}$ on 
               $G_{\beta}(O_r)\cap K_l(O_r)$}\right\}
$$
into $\text{\rm Irr}(G(O_r,\beta)\mid\psi_{\beta})$.

Take any $\sigma\in\text{\rm Irr}(G(O_r,\beta)\mid\psi_{\beta})$ with
representation space $V_{\sigma}$. Then
$$
 V_{\sigma}(\psi_{\beta})
 =\{v\in V_{\sigma}\mid \sigma(g)v=\psi_{\beta}(g)v\,
                \text{\rm for $\forall g\in K_l(O_r)$}\}
$$
is a non-trivial $G(O_r,\beta)$-subspace of $V_{\sigma}$ so that 
$V_{\sigma}=V_{\sigma}(\psi_{\beta})$. Then, for any 
one-dimensional representation $\chi$ of $G(O_r,\beta)$ such that 
$\chi=\psi_{\beta}$ on $K_l(O_r)$, we have 
$K_l(O_r)\subset\text{\rm Ker}(\chi^{-1}\otimes\sigma)$. On the other
hand $G(O_r,\beta)/K_l(O_r)$ is commutative, we have 
$\dim(\chi^{-1}\otimes\sigma)=1$ and then $\dim\sigma=1$. 
Then $\theta=\sigma|_{G_{\beta}(O_r)}$ is a character of
$G_{\beta}(O_r)$ such that $\theta=\psi_{\beta}$ on 
$G_{\beta}(O_r)\cap K_l(O_r)$, and 
we have $\sigma=\sigma_{\beta,\theta}$.

\section{Regularity of Lie elements}
\label{sec:regularity-of-lie-element}
In this section, we will discuss the relation between the regularity
(or smooth regularity, according to 
\cite[p.138]{Springer1966}) of a Lie element and the two conditions of
Theorem \ref{th:main-result}.

\subsection[]{Regularity and smooth regularity}
\label{subsec:suff-condition-for-smooth-commutativeness-of-g-beta}
%We will present a sufficient condition on $\beta\in\frak{g}(O)$ under which 
%$G_{\beta}$ is commutative and smooth over $O$. 

Let us assume that the connected $O$-group scheme $G$ is reductive,
that is, the fibers $G{\otimes}_OK$ ($K=F,\Bbb F$) are reductive
$K$-algebraic groups. In this case the dimension of a maximal torus
in $G{\otimes}_OK$ is independent of $K$ which is denoted by 
$\text{\rm rank}(G)$. For any $\beta\in\frak{g}(O)$ we have
\begin{equation}
 \dim_K\frak{g}_{\beta}(K)=\dim\frak{g}_{\beta}{\otimes}_OK
 \geq\dim G_{\beta}{\otimes}_OK\geq\text{\rm rank}(G).
\label{eq:dimension-of-centrlizer-on-lie-alg-and-group}
\end{equation}
We say $\beta$ is {\it smoothly regular} (resp. {\it regular}) 
with respect to $G$ over $K$ (or simply with respect to $G{\otimes}_OK$) 
if $\dim_K\frak{g}_{\beta}(K)=\text{\rm rank}(G)$ 
(resp. $\dim G_{\beta}{\otimes}_OK=\text{\rm rank}(G)$)
(see \cite[(5.7)]{Springer1966}). 
The relation \eqref{eq:dimension-of-centrlizer-on-lie-alg-and-group}
shows that if $\beta$ is smoothly regular with respect to 
$G{\otimes}_OK$ then $\beta$ is regular with respect to 
$G{\otimes}_OK$. If $\beta$ is smoothly regular with respect to 
$G{\otimes}_OK$ then $G_{\beta}{\otimes}_OK$ is smooth over $K$. 

If $\beta$ is smoothly regular with respect to $G$
over $F$ and over $\Bbb F$, then we say $\beta$ is  smoothly regular
with respect to $G$. 

We say $\beta$ is {\it connected} with respect to $G$ if the 
fibers $G_{\beta}{\otimes}_OK$ ($K=F, \Bbb F$) are connected. 
See Remark
\ref{remark:smooth-regularity-and-connectedness-of-centralizer} for a
sufficient condition for the connectedness of
$G_{\beta}{\otimes}_OK$. Then we have

\begin{prop}
\label{tprop:sufficient-condition-for-smooth-commutativeness-of-g-beta}
If $\beta\in\frak{g}(O)$ is smoothly regular and connected with
respect to $G$, then $G_{\beta}$ is smooth over $O$.
\end{prop}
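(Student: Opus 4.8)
The plan is to verify the two assertions --- smoothness and commutativity of $G_{\beta}$ over $O$ --- separately, in each case reducing to statements about the two fibers $G_{\beta}{\otimes}_OK$ for $K=F,\Bbb F$, which are already under control by hypothesis. For smoothness: the scheme $G_{\beta}=Z_G(\beta)$ is a closed $O$-subgroup scheme of $G$, and $G$ is of finite type over $O$, a discrete valuation ring. By the hypothesis of smooth regularity, each fiber $G_{\beta}{\otimes}_OK$ is smooth over $K$ of dimension $\operatorname{rank}(G)$, which is the same integer for $K=F$ and $K=\Bbb F$. First I would invoke the fibrewise criterion for smoothness over a Noetherian base (flatness plus smooth fibers of locally constant dimension): the only missing ingredient is flatness of $G_{\beta}$ over $O$. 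Since $O$ is a DVR, flatness of the finite-type $O$-scheme $G_{\beta}$ is equivalent to $\varpi$ being a non-zero-divisor on its coordinate ring, i.e. to $G_{\beta}$ being $O$-torsion-free; equivalently, no irreducible component of $G_{\beta}$ is supported in the special fiber. The dimension equality $\dim G_{\beta}{\otimes}_OF=\dim G_{\beta}{\otimes}_O\Bbb F=\operatorname{rank}(G)$ forces the special fiber to have the expected codimension $1$ inside the total space, which rules out embedded or vertical components; I would make this precise via the going-down/altitude formula for the faithfully flat-after-normalization argument, or more cleanly by noting that $Z_{\mathfrak g}(\beta)$ --- the scheme-theoretic centralizer on the Lie algebra, cut out by the linear equations $[X,\beta]=0$ --- is automatically $O$-flat because it is the kernel of a morphism of free $O$-modules $\mathfrak g(O)\to\mathfrak g(O)$, $X\mapsto[X,\beta]$, and over a DVR the kernel of a map of free modules is free; then smooth regularity gives $\dim_K\mathfrak g_{\beta}(K)=\operatorname{rank}(G)$ for both $K$, so $\mathfrak g_{\beta}$ is a free $O$-module of rank $\operatorname{rank}(G)$ and is the Lie algebra of $G_{\beta}$ in each fiber of the expected dimension; this pins down $\dim_{\Bbb F}\operatorname{Lie}(G_{\beta}{\otimes}_O\Bbb F)=\dim G_{\beta}{\otimes}_O\Bbb F$, giving smoothness of the special fiber and, together with flatness, smoothness of $G_{\beta}$ over $O$.

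For commutativity: each fiber $G_{\beta}{\otimes}_OK$ is, by smooth regularity, a smooth group of dimension equal to $\operatorname{rank}(G)$, hence a group whose identity component is a torus (a connected smooth group of dimension $\operatorname{rank}(G)$ sitting inside a reductive group, all of whose semisimple-rank-$0$ such subgroups are tori --- here I would cite the standard fact that the centralizer of a regular element, or the connected centralizer of a smoothly regular element, in a connected reductive group is a maximal torus, using the connectedness hypothesis on $\beta$). A torus is commutative, and by the connectedness hypothesis $G_{\beta}{\otimes}_OK$ equals its identity component, so both fibers are commutative. To upgrade fibrewise commutativity to commutativity over $O$, I would consider the commutator morphism $c:G_{\beta}\times_O G_{\beta}\to G_{\beta}$, $(x,y)\mapsto xyx^{-1}y^{-1}$; it factors through a morphism to $G_{\beta}$ whose composition with the projection to each fiber lands in the identity section. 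Because $G_{\beta}$ is now known to be $O$-smooth, hence $O$-flat and $O$-separated, and its generic fiber is schematically dense, the locus where $c$ equals the constant map to $e$ is a closed subscheme of $G_{\beta}\times_O G_{\beta}$ containing the (schematically dense) generic fiber, hence is everything; so $c$ is the trivial map and $G_{\beta}$ is commutative over $O$.

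The main obstacle I anticipate is the flatness step --- making rigorous the passage from "the two fibers have the same, expected dimension" to "$G_{\beta}$ is $O$-flat" without hand-waving about components. The cleanest route, and the one I would actually write up, is to bypass the group scheme entirely at that stage: establish $O$-freeness of $\mathfrak g_{\beta}=Z_{\mathfrak g}(\beta)$ directly as the kernel of an $O$-linear endomorphism of the free module $\mathfrak g(O)$, observe via smooth regularity that its rank is $\operatorname{rank}(G)$ and that it realizes the Lie algebra of each fiber, deduce smoothness of each fiber from the dimension-of-Lie-algebra criterion, and only then combine with the fibrewise smoothness and constant fiber dimension to conclude $O$-smoothness of $G_{\beta}$ by the standard fibrewise criterion (e.g. EGA IV$_4$, 17.8.2, or \cite[Chap.~II]{Demazure-Gabriel1970}). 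A secondary subtlety is the identification "connected smooth centralizer of a smoothly regular element $=$ maximal torus," which needs the reductivity of $G$ and a reference such as \cite[(5.7)]{Springer1966}; once that is in hand, commutativity of the fibers --- and then, by schematic density of the generic fiber in the $O$-smooth scheme $G_{\beta}$, commutativity over $O$ --- is routine.
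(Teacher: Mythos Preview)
Your commutativity argument has a genuine error. You assert that the connected centralizer of a smoothly regular element in a reductive group is a maximal torus, and you point to \cite[(5.7)]{Springer1966} for this. That is false: smoothly regular does not mean semisimple. A regular \emph{nilpotent} element $\beta$ in (say) $\frak{sl}_n$ has connected centralizer which is a commutative \emph{unipotent} group, not a torus; and a general smoothly regular $\beta$ with Jordan decomposition $\beta=\beta_s+\beta_n$ has centralizer which is an extension of a torus by a unipotent group, again not a torus. The paper's proof handles exactly this point: it takes the Jordan decomposition of $\overline\beta\in\frak{g}(F)$, observes that $G_{\beta}\otimes_OF=Z_{\mathcal{G}}(\beta_n)$ where $\mathcal{G}=Z_{G\otimes_OF}(\beta_s)^o$ is reductive and $\beta_n$ is regular nilpotent in $\text{Lie}(\mathcal{G})$, and then invokes Lou's theorem \cite{Lou1968} that the centralizer of a regular unipotent (equivalently, regular nilpotent) element in a reductive group is commutative. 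Your argument would need to be rewritten along these lines; the reference to Springer alone does not give commutativity.

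Your smoothness argument is also more laborious than necessary, and the flatness step is not actually carried out (you offer several sketches but do not close any of them; the freeness of $\frak{g}_{\beta}(O)$ as an $O$-module does not by itself yield flatness of the group scheme $G_{\beta}$). The paper avoids flatness entirely by quoting the SGA~3 criterion (Expos\'e $\text{VI}_B$, Th.~3.10 and Cor.~4.4): for an $O$-group scheme, smoothness of the fibers with constant fiber dimension is equivalent to the neutral component $G_{\beta}^o$ being representable as a smooth open subgroup scheme; the connectedness hypothesis then gives $G_{\beta}^o=G_{\beta}$, hence $G_{\beta}$ is smooth over $O$. This bypasses the flatness-plus-smooth-fibers criterion you were aiming for.
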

\begin{proof}
Let $G_{\beta}^o$ be the neutral component of $O$-group scheme
$G_{\beta}$ which is a group functor of the category of $O$-scheme 
(see $\S 3$ of Expos\'e $\text{\rm VI}_B$ in \cite{SGA-3}).
The following statements are equivalent;
\begin{enumerate}
\item $G_{\beta}^o$ is representable as an smooth open $O$-group subscheme of
      $G_{\beta}$, 
\item $G_{\beta}$ is smooth at the points of unit section,
\item each fibers $G_{\beta}{\otimes}_OK$ ($K=F,\Bbb F$) are smooth
      over $K$ and their dimensions are constant
\end{enumerate}
(see Th. 3.10 and Cor. 4.4 of \cite{SGA-3}). So if $\beta$ is
smoothly regular with respect to $G$, then $G_{\beta}^o$ is smooth
open $O$-group subscheme of $G_{\beta}$. If further $\beta$ is connected
with respect to $G$, then $G_{\beta}^o=G_{\beta}$ is smooth over
$O$. 
\end{proof}

Let us present more detail description of the smooth regularity of Lie
element. Put $K=F$ or $\Bbb F$.
%Assume until the end of this subsection 
%that the characteristic of $K=F, \Bbb F$ is not bad
%with respect to $G{\otimes}_OK$. 
%The list of the bad primes is 
%(see \cite[p.178, I-4.3]{Borel1968}). 

Take a $\beta\in\frak{g}(O)$ and let $\overline\beta=\beta_s+\beta_n$
be the Jordan decomposition of $\overline\beta\in\frak{g}( K)$
into the semi-simple part $\beta_s\in\frak{g}( K)$ and the
nilpotent part $\beta_n\in\frak{g}( K)$
($\overline\beta\in\frak{g}(K)$ is the image of $\beta\in\frak{g}(O)$
under the canonical mapping $\frak{g}(O)\to\frak{g}(K)$). 
The identity component 
$L=Z_{G{\otimes}_O K}(\beta_s)^o$ of the centralizer of $\beta_s$ in 
$G{\otimes}_O K$ is a reductive group over $ K$ and there
exists a maximal torus $T$ of $G{\otimes}_O K$ such that
$$
 \beta_s\in\text{\rm Lie}(T)( K)
$$
(see \cite[Prop.13.19]{Borel1991} and its proof). 
Then $T\subset L$ and 
$\text{\rm rank}(L)=\text{\rm rank}(G)$. Put 
$\frak{l}=\text{\rm Lie}(L)$, then 
$\frak{l}(K)=Z_{\frak{g}(K)}(\beta_s)$. 
So $\overline\beta\in\frak{g}( K)$ is smoothly regular with
respect to $G{\otimes}_O K$ if and only if 
$\beta_n\in\frak{l}( K)$ is smoothly regular with respect to $L$. 

Now fix a system of
positive roots $\Phi^+$ in the root system $\Phi(T,L)$ of
$L$ with respect to $T$ such that 
\begin{equation}
 \beta_n=\sum_{\alpha\in\Phi^+}c_{\alpha}\cdot X_{\alpha}
\label{eq:root-expension-of-nilpotent-part-of-beta}
\end{equation}
where $X_{\alpha}$ is a root vector of the root $\alpha$. Let us
denote by $\Gamma$ (resp. $\Gamma^{\prime}$) the root lattice (the
weight lattice) of the root system $\Phi(T,L)$. Then the finite group
$\Gamma^{\prime}/\Gamma$ is called the fundamental group of
$\Phi(T,L)$ (see \cite[(1.4)]{Springer1966}). 
Considering the
decomposition into the simple factors, we may assume that the root
system $\Phi(T,L)$ is simple. Then results of 
\cite[(5.8), (5.9)]{Springer1966} and 
\cite[p.228,III-3.5]{Borel1968} implies 

\begin{prop}
\label{prop:regularity-in-terms-of-coefficient-of-root-vector}
Assume that $G$ is semi-simple, that is, the fibers 
$G{\otimes}_OK$ ($K=F,\Bbb F$) are semi-simple algebraic $K$-groups. 
Assume also that the characteristic of $K$ is not the bad prime and does
not divide the order of the fundamental group of 
$\Phi(T,L)$ tabulated below. 
\begin{center}
\begin{tabular}{c|c|c|c|c|c|c|c}
type of $\Phi(T,L)$   
         &$A_m$      &$B_m, C_m$&$D_m$&$E_6$&$E_7$&$E_8$  &$F_4, G_2$\\
\hline
bad prime&$\emptyset$&   $2$    &$2$  &$2,3$&$2,3$&$2,3,5$&$2,3$\\
\hline
oder of $\Gamma^{\prime}/\Gamma$
         &  $m+1$    &   $2$    & $4$ & $3$ & $2$ & $1$   & $1$.
\end{tabular}
\end{center}
Then the following three statements are equivalent:
\begin{enumerate}
\item $\overline\beta\in\frak{g}(K)$ is smoothly regular with respect
      to $G$ over $K$,
\item $\overline\beta\in\frak{g}(K)$ is regular with respect
      to $G$ over $K$,
\item $c_{\alpha}\neq 0$ for all simple $\alpha\in\Phi^+$ in 
      \eqref{eq:root-expension-of-nilpotent-part-of-beta}.
\end{enumerate}
\end{prop}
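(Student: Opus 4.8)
The plan is to reduce the implications not already on record to statements about the nilpotent element $\beta_{n}$ in the reductive $K$-group $L=Z_{G\otimes_{O}K}(\beta_{s})^{o}$, and then to combine the cited results of Springer and Borel with a single extra input on smoothness of centralizers. The implication $(1)\Rightarrow(2)$ has already been observed: it is \eqref{eq:dimension-of-centrlizer-on-lie-alg-and-group} together with the remark immediately following it. For the rest, recall from the discussion preceding the proposition that $\overline{\beta}$ is smoothly regular (resp.\ regular) with respect to $G$ over $K$ if and only if $\beta_{n}\in\frak{l}(K)$ is smoothly regular (resp.\ regular) with respect to $L$ — for the ``regular'' version one uses that $Z_{G\otimes_{O}K}(\overline{\beta})$ and $Z_{L}(\beta_{n})$ have the same identity component — and that $\text{\rm rank}(L)=\text{\rm rank}(G)$. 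Moreover, decomposing $\frak{l}$ into its abelian part and its simple factors, a decomposition compatible with the root expansion \eqref{eq:root-expension-of-nilpotent-part-of-beta}, with the dimension counts and with the notion of regularity, we may and do assume $\Phi(T,L)$ is simple. It therefore remains to prove $(2)\Leftrightarrow(3)$ and $(2)\Rightarrow(1)$ for $\beta_{n}$ and $L$.

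For $(2)\Leftrightarrow(3)$, the coordinate criterion for regular nilpotency, I would invoke \cite[(5.8)]{Springer1966} and \cite[p.228,III-3.5]{Borel1968}, which are available because $\text{\rm char}(K)$ is good: they assert that a nilpotent element $\sum_{\alpha\in\Phi^{+}}c_{\alpha}X_{\alpha}$ of $\frak{l}$ is regular with respect to $L$ if and only if $c_{\alpha}\neq0$ for every simple root $\alpha$, which in view of the expansion \eqref{eq:root-expension-of-nilpotent-part-of-beta} is precisely $(2)\Leftrightarrow(3)$. The geometric content behind the nontrivial direction $(2)\Rightarrow(3)$ is this: writing $\frak{u}=\bigoplus_{\alpha\in\Phi^{+}}\frak{g}_{\alpha}$ for the Lie algebra of the unipotent radical of the Borel subgroup $T\cdot U$ of $L$, the regular nilpotent elements of $\frak{l}$ contained in $\frak{u}$ form the dense $\text{\rm Ad}(T\cdot U)$-orbit in $\frak{u}$; if some simple coefficient $c_{\alpha_{0}}$ vanishes, then $\beta_{n}$ lies in the proper, $\text{\rm Ad}(T\cdot U)$-stable subspace $\frak{u}^{\prime}=\bigoplus_{\alpha\in\Phi^{+}\setminus\{\alpha_{0}\}}\frak{g}_{\alpha}$ — proper and $\text{\rm Ad}$-stable because a sum of two positive roots has height $\geq2$ and so cannot equal the simple root $\alpha_{0}$, giving $\text{\rm Ad}(T\cdot U)\beta_{n}\subseteq\beta_{n}+[\frak{u},\frak{u}^{\prime}]\subseteq\frak{u}^{\prime}$ — hence $\beta_{n}$ misses the dense orbit and is not regular. (The direction $(3)\Rightarrow(2)$ is the easy one: after a conjugation by $t\in T(\overline{K})$ normalising the simple coefficients to $1$, which is harmless since regularity is insensitive to base change to $\overline{K}$, one lands in the dense orbit.)

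For $(2)\Rightarrow(1)$, assume $\beta_{n}$ is regular, i.e.\ $\dim Z_{L}(\beta_{n})=\text{\rm rank}(L)$; one must show $\dim_{K}Z_{\frak{l}}(\beta_{n})(K)=\text{\rm rank}(L)$ for the Lie-algebra centralizer. One always has $\dim_{K}Z_{\frak{l}}(\beta_{n})(K)\geq\dim Z_{L}(\beta_{n})$, with equality exactly when the centralizer group scheme $Z_{L}(\beta_{n})$ is smooth over $K$. The hypothesis that $\text{\rm char}(K)$ not divide the order $|\Gamma^{\prime}/\Gamma|$ of the fundamental group guarantees that the centralizer of a regular nilpotent element is smooth — this is the content of \cite[(5.9)]{Springer1966} — and then $\dim_{K}Z_{\frak{l}}(\beta_{n})(K)=\dim Z_{L}(\beta_{n})=\text{\rm rank}(L)$, i.e.\ $\beta_{n}$ is smoothly regular.

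I expect this last implication $(2)\Rightarrow(1)$ to be the essential obstacle. The full hypothesis — $\text{\rm char}(K)$ neither a bad prime nor a divisor of $|\Gamma^{\prime}/\Gamma|$ — is exactly what is needed to force the orbit map $L\to L\cdot\beta_{n}$ to be separable, equivalently $Z_{L}(\beta_{n})$ to be smooth; without it the Lie-algebra centralizer of a regular nilpotent can be strictly larger than $\text{\rm rank}(L)$ (for type $A_{p-1}$ in characteristic $p$ the regular nilpotent has $\text{\rm ad}$-kernel of dimension $p>\text{\rm rank}$), so that $(2)$ no longer implies $(1)$. A secondary, purely technical matter is that the cited results of Springer and Borel are usually phrased over an algebraically closed field whereas $K=F$ may be imperfect; but $\beta_{n}$ is already nilpotent and every statement used concerns dimensions, which are unaffected by base change to $\overline{K}$, so no genuine difficulty arises there.
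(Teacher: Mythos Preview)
Your argument is correct and follows exactly the approach of the paper, which does not give a separate proof but simply asserts that the proposition follows from \cite[(5.8), (5.9)]{Springer1966} and \cite[p.228, III-3.5]{Borel1968}. You have in fact supplied more detail than the paper does, correctly allocating the citations: Springer (5.8) and Borel III-3.5 for the equivalence $(2)\Leftrightarrow(3)$ on regular nilpotents, and Springer (5.9) for the smoothness of the centralizer needed in $(2)\Rightarrow(1)$.
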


\begin{rem}
\label{remark:smooth-regularity-and-connectedness-of-centralizer}
Assume that $\overline\beta\in\frak{g}(K)$ is smoothly regular with
respect to $G{\otimes}_OK$. Then $G_{\beta}{\otimes}_OK$ is connected
if $Z_{G{\otimes}_OK}(\beta_s)$ and its center are connected 
(see Theorem 5.9 b) of \cite{Springer1966}).
\end{rem}

\subsection[]{Case of $GL_n$}
\label{subsec:detailed-description-of-the-case-gl-n}
Let us consider the case of $GL_n$ ($n\geq 2$) which is a connected smooth
reductive $O$-group scheme. For $\beta\in\frak{gl}_n(O)$, the
following statements are equivalent ($K=F,\Bbb F$);
\begin{enumerate}
\item $c_{\alpha}\neq 0$ for all simple $\alpha\in\Phi^+$ in 
      \eqref{eq:root-expension-of-nilpotent-part-of-beta},
\item the characteristic polynomial 
      $\chi_{\overline\beta}(t)
       =\det(t1_n-\overline\beta)\in K[t]$ is the
      minimal polynomial of $\overline\beta\in M_n(K)$,
\item $\overline\beta\in M_n(K)$ is 
      $GL_n(\overline K)$-conjugate to 
\begin{equation}
 J_{n_1}(\alpha_1)\boxplus\cdots\boxplus J_{n_r}(\alpha_r)=
 \begin{bmatrix}
  J_{n_1}(\alpha_1)&      &                 \\
                   &\ddots&                 \\
                   &      &J_{n_r}(\alpha_r)
 \end{bmatrix}
\label{eq:jordan-decomposition-of-beta}
\end{equation}
      where $\alpha_1,\cdots,\alpha_r$ are distinct elements of the
      algebraic closure $\overline K$ of $K$ and
$$
 J_m(\alpha)=\begin{bmatrix}
              \alpha&   1  &      &      \\
                    &\alpha&\ddots&      \\
                    &      &\ddots&   1  \\
                    &      &      &\alpha\\
             \end{bmatrix}
$$
      is a Jordan block of size $m$,
\item $\{X\in M_n(K)\mid X\overline\beta=\overline\beta X\}
       =K[\overline\beta]$.
\end{enumerate}
In these cases, $\overline\beta\in\frak{gl}_n(K)$ is smoothly regular
with respect to $GL_n$ over $K$ and 
the fiber $GL_{n,\beta}{\otimes}_OK$ is connected. 

Note also that the following statements are equivalent;
\begin{enumerate}
\item the characteristic polynomial of $\overline\beta\in M_n(\Bbb F)$
      is its minimal polynomial,
\item $\{X\in M_n(O_l)\mid X\beta\equiv\beta X\pmod{\frak{p}^l}\}
       =O_l[\beta_l]$ for all $l>0$,
       with $\beta_l=\beta\pmod{\frak{p}^l}$ 
\item $O^n$ is a cyclic $O[\beta]$-module, that is, there exists a
      vector $v\in O^n$ such that $O^n=O[\beta]v$.
\end{enumerate}
In these cases we have
$$
 \{X\in M_n(O)\mid X\beta=\beta X\}=O[\beta]
$$
and the characteristic polynomial of $\beta\in M_n(O)$ is its minimal
polynomial. In particular $GL_{n,\beta}$ is a smooth over $O$ 
by Proposition
\ref{tprop:sufficient-condition-for-smooth-commutativeness-of-g-beta}. 
%that is $\beta\in\frak{gl}_n(O)$ is smoothly regular with
%respect to $GL_n$ over $F$. Then the fibers 
%$GL_{n,\beta}{\otimes}_OK$ ($K=\Bbb F, F$) are connected, and hence 
%the centralizer $GL_{n,\beta}$ is commutative and smooth over $O$ by 
%Proposition
%\ref{tprop:sufficient-condition-for-smooth-commutativeness-of-g-beta}. 

It is easy to show that the conditions I), II) and III) of section 
\ref{subsec:fundamental-setting} hold for $GL_n$. So 
Theorem \ref{th:main-result} gives

\begin{thm}\label{th:main-result-for-gl-n}
%Assume that the characteristic of $\Bbb F$ is greater than $n$. 
Take a $\beta\in\frak{gl}_n(O)$ such that the characteristic
polynomial of $\overline\beta\in M_n(\Bbb F)$ is its minimal
polynomial. Then we have a bijection 
$\theta\mapsto
 \text{\rm Ind}_{GL_n(O_r,\beta)}^{GL_n(O_r)}\sigma_{\beta,\theta}$ 
of the set 
$$
  \left\{\begin{array}{l}
   \theta:O_r[\beta_r]^{\times}\to\Bbb C^1
     :\text{\rm group homomorphism}\\
   \hphantom{\theta:}
        \text{\rm s.t. 
  $\theta(x\npmod{\frak{p}^r})
    =\tau\left(\varpi^{-r}\text{\rm tr}(\beta(x-1_n))\right)$}\\
   \hphantom{\theta:s.t.}
        \text{\rm for 
   $\forall x\in 1_n+\varpi^lO[\beta]$}
         \end{array}\right\}
$$
onto the set 
$\text{\rm Irr}(GL_n(O_r)\mid\psi_{\beta})$.
\end{thm}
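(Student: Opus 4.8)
The plan is to deduce the assertion from Theorem \ref{th:main-result} together with the Clifford theory of subsection \ref{subsec:clifford-theory-in-general}. The two points that need attention are the verification of the hypotheses of Theorem \ref{th:main-result} for $G=GL_n$ under the assumption that the characteristic of $\Bbb F$ exceeds $n$, and the translation of the abstract parameter set appearing there into the concrete set displayed in the statement.

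First I would check the hypotheses. The conditions I), II), III) of subsection \ref{subsec:fundamental-setting} hold for $GL_n$ (the trace form is non-degenerate on $\frak{gl}_n$, and the two exponential truncations make sense because $\varpi$ is nilpotent modulo $\frak p^r$), and the second hypothesis of Theorem \ref{th:main-result} is exactly our assumption on $\overline\beta$. For the first hypothesis I would apply Proposition \ref{tprop:sufficient-condition-for-smooth-commutativeness-of-g-beta}: by the equivalences of subsection \ref{subsec:detailed-description-of-the-case-gl-n}, the hypothesis that $\chi_{\overline\beta}(t)$ be the minimal polynomial of $\overline\beta\in M_n(\Bbb F)$ forces $O^n$ to be a cyclic $O[\beta]$-module, hence $F^n$ to be a cyclic $F[\beta]$-module; so over $K=F$ and over $K=\Bbb F$ the matrix $\overline\beta$ is $GL_n(\overline K)$-conjugate to $J_{n_1}(\alpha_1)\boxplus\cdots\boxplus J_{n_s}(\alpha_s)$ with the $\alpha_i$ pairwise distinct and $n_1+\cdots+n_s=n$, and since the characteristic of $K$ is either $0$ or the characteristic of $\Bbb F$, which is $>n$, it divides none of the $n_i$ and hence not the product $n_1\cdots n_s$; therefore, again by subsection \ref{subsec:detailed-description-of-the-case-gl-n}, the fibre $GL_{n,\beta}\otimes_OK$ is connected and $\overline\beta$ is smoothly regular with respect to $GL_n$ over $K$. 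Thus $\beta$ is smoothly regular and connected with respect to $GL_n$, so Proposition \ref{tprop:sufficient-condition-for-smooth-commutativeness-of-g-beta} gives that $G_\beta=GL_{n,\beta}$ is commutative and smooth over $O$.

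Granting this, Theorem \ref{th:main-result} supplies a bijection $\theta\mapsto\sigma_{\beta,\theta}$ from $\{\theta\in G_\beta(O_r)\sphat\mid\theta=\psi_\beta\text{ on }G_\beta(O_r)\cap K_l(O_r)\}$ onto $\text{\rm Irr}(GL_n(O_r,\beta)\mid\psi_\beta)$, and composing with the bijection $\sigma\mapsto\text{\rm Ind}_{GL_n(O_r,\beta)}^{GL_n(O_r)}\sigma$ of subsection \ref{subsec:clifford-theory-in-general} produces a bijection onto $\text{\rm Irr}(GL_n(O_r)\mid\psi_\beta)$. It then remains to rewrite the source set. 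Since $GL_{n,\beta}(R)=\{g\in GL_n(R)\mid g\overline\beta=\overline\beta g\}$ and the centralizer of $\beta_r$ in $M_n(O_r)$ equals $O_r[\beta_r]$ (subsection \ref{subsec:detailed-description-of-the-case-gl-n}), an element of $O_r[\beta_r]$ lies in $GL_n(O_r)$ precisely when it is a unit of the finite $O_r$-algebra $O_r[\beta_r]$, because its matrix inverse again centralizes $\beta_r$; hence $G_\beta(O_r)=O_r[\beta_r]^\times$ and $G_\beta(O_r)\sphat$ is the group of homomorphisms $O_r[\beta_r]^\times\to\Bbb C^1$. Next, cyclicity of $O^n$ makes $1,\beta,\dots,\beta^{n-1}$ an $O$-basis of $O[\beta]$ and $M_n(O)/O[\beta]$ a torsion-free, hence free, $O$-module, so $O[\beta]$ is an $O$-module direct summand of $M_n(O)$; tensoring with $O_r$ yields $O_r[\beta_r]\cap\varpi^lM_n(O_r)=\varpi^lO_r[\beta_r]$, and therefore $G_\beta(O_r)\cap K_l(O_r)=1_n+\varpi^lO_r[\beta_r]$, the image of $1_n+\varpi^lO[\beta]$ in $GL_n(O_r)$. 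Finally, for $x=1_n+\varpi^lY$ with $Y\in O[\beta]$, the definition of $\psi_\beta$ together with the identity $r-l=l^\prime$ gives $\psi_\beta(x\npmod{\frak p^r})=\tau(\varpi^{-l^\prime}\text{\rm tr}(Y\beta))=\tau(\varpi^{-r}\text{\rm tr}(\beta(x-1_n)))$, so the condition ``$\theta=\psi_\beta$ on $G_\beta(O_r)\cap K_l(O_r)$'' is literally the normalization condition in the displayed set. This identifies the two parameter sets and completes the proof.

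The main obstacle I anticipate is the hypothesis check, and within it the point that the assumption ``characteristic of $\Bbb F$ greater than $n$'' is precisely what secures smooth regularity of $\overline\beta$ on both fibres (the $F$-fibre being handled through the passage from cyclicity of $O^n$ to cyclicity of $F^n$), which is what forces $G_\beta$ to be $O$-smooth; once the structure theory of subsection \ref{subsec:detailed-description-of-the-case-gl-n} is in hand, the identification of the parameter set is routine bookkeeping around the decomposition $r=l+l^\prime$.
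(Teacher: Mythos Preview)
Your argument is correct and follows the same route as the paper: the paper simply says ``It is easy to show that the conditions I), II) and III) of subsection \ref{subsec:fundamental-setting} hold for $GL_n$. So Theorem \ref{th:main-result} gives [this theorem]'', leaving the verification of smoothness of $G_\beta$ (via the discussion of smooth regularity in subsection \ref{subsec:detailed-description-of-the-case-gl-n} and the bound $p>n\geq n_i$, cf.\ Remark \ref{remark:condition-of-characteristic-is-prime-to-n-i}) and the identification $G_\beta(O_r)=O_r[\beta_r]^\times$, $G_\beta(O_r)\cap K_l(O_r)=1_n+\varpi^lO_r[\beta_r]$ implicit. You have spelled out precisely these details, and your bookkeeping around the decomposition $r=l+l'$ to match the normalization of $\psi_\beta$ with the displayed formula is exactly what is needed.
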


In order to give a more explicit description, let us recall the
following results due to Shintani \cite{Shintani1968}.

Let $L/F$ be a tamely ramified  separable field extension of degree $n$ 
and $O_L\subset L$ the integer ring with the maximal ideal
$\frak{p}_L=\varpi_LO_L$. The residue class field $\Bbb F=O/\frak{p}$
is identified with a subfield of $\Bbb L=O_L/\frak{p}_L$. 
A prime element $\varpi_L$ can be chosen so
that we have $\varpi_L^e\in O_{L_0}$ where  $L_0$ is the maximal
unramified subextension of $L/F$ and $e=(L:L_0)$ is the ramification
index of $L/F$. Then we have $O_L=O_{L_0}[\varpi_L]$. 
We will identify $L$ with a $F$-subalgebra of $M_n(F)$ by means of
the regular representation of $L$ with respect to an $O$-basis of
$O_L$. Then we have
\cite[p.545, Lemma 4-7, Cor.1, p.546, Cor.2]{Shintani1968}

\begin{prop}
\label{prop:generator-of-tame-extension-is-smoothly-regular}
For a $\beta=\sum_{i=0}^{e-1}b_i\varpi_L^i\in O_L$ with 
$b_i\in O_{L_0}$, the following two statements are equivalent;
\begin{enumerate}
\item $O_L=O[\beta]$,
\item $\overline{b_0}^{\sigma}\neq\overline{b_0}$ for all 
      $1\neq\sigma\in\text{\rm Gal}(\Bbb L/\Bbb F)$, and 
      $b_1\in O_{L_0}^{\times}$ if $e>1$.
\end{enumerate}
In this case, the characteristic
polynomial $\chi_{\beta}(t)=\det(t\cdot 1_n-\beta)$ of 
$\beta\in M_n(O)$ has the following properties;
\begin{enumerate}
\item $\chi_{\beta}(t)\npmod{\frak p}\in\Bbb F[t]$ is the minimal
  polynomial of $\overline\beta\in M_n(\Bbb F)$,
\item $\chi_{\beta}(t)\npmod{\frak p}=p(t)^e$ with an irreducible
      polynomial $p(t)\in\Bbb F[t]$, 
\item $\chi_{\beta}(t)\npmod{\frak{p}^2}\in O_2[t]$ is irreducible.
\end{enumerate}
\end{prop}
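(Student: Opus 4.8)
The plan is to reduce the generation statement 1)$\Leftrightarrow$2) to linear algebra over $\Bbb F$ by Nakayama's lemma, and then to read off the three properties of $\chi_{\beta}(t)$, the assertion modulo $\frak{p}^2$ being the only non-formal point. The first step is to identify the ring $O_L/\frak{p}O_L$. Since $L_0/F$ is unramified, $\varpi$ is a prime element of $L_0$; since $\varpi_L^e\in O_{L_0}$ has $L_0$-valuation $1$, the polynomial $X^e-\varpi_L^e$ is Eisenstein over $O_{L_0}$, so $O_L=O_{L_0}[\varpi_L]\cong O_{L_0}[X]/(X^e-\varpi_L^e)$, and reducing modulo $\frak{p}O_L=\varpi O_L$ gives a ring isomorphism $O_L/\frak{p}O_L\cong\Bbb L[X]/(X^e)$ under which $\overline\beta$ corresponds to $\sum_{i=0}^{e-1}\overline{b_i}X^i$. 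Because $\Bbb L/\Bbb F$ is separable, $\overline{b_0}\in\Bbb L$ is a semisimple element of the commutative $\Bbb F$-algebra $\Bbb L[X]/(X^e)$ and $\nu=\sum_{i\geq1}\overline{b_i}X^i$ is nilpotent, so $\overline\beta=\overline{b_0}+\nu$ is the Jordan decomposition of $\overline\beta$.

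For the equivalence of 1) and 2): by Nakayama, $O[\beta]=O_L$ iff $\Bbb F[\overline\beta]=\Bbb L[X]/(X^e)$. Since the semisimple part of an element is a polynomial in it over the ground field, $\overline{b_0}\in\Bbb F[\overline\beta]$, hence also $\nu\in\Bbb F[\overline\beta]$ and $\Bbb F[\overline\beta]=\Bbb F[\overline{b_0}][\nu]$. Now $\Bbb F[\overline{b_0}]=\Bbb L$ iff $\overline{b_0}$ has trivial stabilizer in $\text{\rm Gal}(\Bbb L/\Bbb F)$, i.e. iff $\overline{b_0}^{\sigma}\neq\overline{b_0}$ for all $\sigma\neq1$; granting this, $\Bbb L[\nu]=\Bbb L[X]/(X^e)$ iff $1,\nu,\dots,\nu^{e-1}$ is an $\Bbb L$-basis, which — since $\nu^k=\overline{b_1}^{\,k}X^k+(\text{higher order in }X)$ — holds exactly when $\overline{b_1}\neq0$, that is when $b_1\in O_{L_0}^{\times}$, a requirement vacuous precisely when $e=1$. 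This proves 1)$\Leftrightarrow$2).

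Assuming these conditions, $\beta$ generates $L/F$, so $\chi_{\beta}(t)$ is the minimal polynomial of $\beta$ over $F$ and $O[\beta]\cong O[t]/(\chi_{\beta}(t))$. Reducing modulo $\frak{p}$ gives $\Bbb F[\overline\beta]\cong\Bbb F[t]/(\chi_{\beta}\npmod{\frak p})$; since $\Bbb F[\overline\beta]$ has $\Bbb F$-dimension $n$ and the minimal polynomial of $\overline\beta$ divides the monic degree-$n$ polynomial $\chi_{\beta}\npmod{\frak p}$, the two agree, which is property 1). Moreover $\Bbb F[t]/(\chi_{\beta}\npmod{\frak p})\cong\Bbb L[X]/(X^e)$ is a local ring with residue field $\Bbb L$, which forces $\chi_{\beta}\npmod{\frak p}=p(t)^e$ with $p(t)$ the minimal polynomial of $\overline{b_0}$ over $\Bbb F$, irreducible of degree $f=[\Bbb L:\Bbb F]$ (so indeed $ef=n$); this is property 2).

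The remaining, and hardest, point is property 3). Let $h(t)\in O_{L_0}[t]$ be the characteristic polynomial of $\beta$ over $L_0$, so that $\chi_{\beta}(t)=\prod_{\sigma\in\text{\rm Gal}(L_0/F)}h^{\sigma}(t)$. If $e=1$ then $\chi_{\beta}\npmod{\frak p}=p(t)$ is irreducible and nothing is to be shown, so assume $e\geq2$. Then $\gamma=\beta-b_0$ is a prime element of $L$ (because $b_1\in O_{L_0}^{\times}$) with $L=L_0(\gamma)$, so its minimal polynomial over $L_0$ is Eisenstein of degree $e$, and since $\frak{p}_{L_0}=\varpi O_{L_0}$ we may write $h(t)=(t-b_0)^e+\varpi q(t)$ with $\deg q<e$ and $q(b_0)\in O_{L_0}^{\times}$. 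Expanding the product gives $\chi_{\beta}(t)\equiv P(t)^e+\varpi R(t)\npmod{\frak{p}^2}$, where $P(t)=\prod_{\sigma}(t-b_0^{\sigma})$ is a lift of $p(t)$ and $R(t)=\sum_{\sigma}q^{\sigma}(t)\prod_{\sigma'\neq\sigma}(t-b_0^{\sigma'})^e\in O[t]$. Now suppose $\chi_{\beta}\equiv g_1g_2\npmod{\frak{p}^2}$ with $g_1,g_2$ monic of positive degree; reducing modulo $\frak{p}$ and using unique factorization in $\Bbb F[t]$ forces $g_1\equiv P^a$, $g_2\equiv P^b\npmod{\frak p}$ with $a+b=e$ and $a,b\geq1$, so $g_1=P^a+\varpi s_1$, $g_2=P^b+\varpi s_2$ for suitable $s_1,s_2$, and multiplying out modulo $\frak{p}^2$ gives $R\equiv P^bs_1+P^as_2\npmod{\frak p}$, whence $p(t)$ divides $\overline R(t)$ in $\Bbb F[t]$. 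But evaluating $\overline R$ at a root $\overline{b_0}$ of $p$ kills every summand with $\sigma\neq1$ (each carries the factor $(\overline{b_0}-\overline{b_0})^e$), leaving $\overline R(\overline{b_0})=\overline{q(b_0)}\cdot\prod_{\sigma'\neq1}(\overline{b_0}-\overline{b_0}^{\sigma'})^e\neq0$, since $q(b_0)$ is a unit and the $\overline{b_0}^{\sigma'}$ are distinct from $\overline{b_0}$ by 2). This contradiction proves that $\chi_{\beta}\npmod{\frak{p}^2}$ is irreducible. I expect this last step to be the main obstacle: because $\chi_{\beta}$ reduces modulo $\frak{p}$ to a non-squarefree power of a single irreducible, ordinary Hensel lifting does not apply, and one must instead keep track of the first-order term $R(t)$ of the norm $\prod_{\sigma}h^{\sigma}(t)$ modulo $\frak{p}^2$ and verify that it is prime to $p(t)$; getting the bookkeeping of that norm right — that $R\in O[t]$, and that only the $\sigma=1$ term survives evaluation at $\overline{b_0}$ — is the real content, the rest being Nakayama together with standard facts on Jordan decomposition and Eisenstein polynomials.
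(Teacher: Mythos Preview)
Your argument is correct. The paper itself does not prove this proposition: it is stated as a quotation of results of Shintani \cite[p.~545, Lemma 4--7, Cor.~1; p.~546, Cor.~2]{Shintani1968}, with no proof given in the text. So there is nothing to compare line by line; what you have written is a self-contained proof that fills in what the paper leaves to the reference.

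A few remarks on the content. The reduction of 1)$\Leftrightarrow$2) via Nakayama and the identification $O_L/\frak{p}O_L\cong\Bbb L[X]/(X^e)$ is exactly the natural approach, and the use of the Jordan decomposition $\overline\beta=\overline{b_0}+\nu$ to split the generation question into ``$\overline{b_0}$ generates $\Bbb L/\Bbb F$'' and ``$\nu$ generates the nilpotent part'' is clean. Properties 1) and 2) then drop out formally as you say.

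Your treatment of property 3) is the substantive part and is correct: writing $h(t)=(t-b_0)^e+\varpi q(t)$ with $q(b_0)\in O_{L_0}^{\times}$ via the Eisenstein polynomial of $\gamma=\beta-b_0$, taking the norm modulo $\frak p^2$, and then showing that any putative factorization forces $p\mid\overline R$ while direct evaluation gives $\overline R(\overline{b_0})\neq 0$. The Galois-invariance check that $R\in O[t]$ is routine but worth keeping, and your observation that only the $\sigma=1$ summand survives evaluation at $\overline{b_0}$ is the key point. This is essentially Shintani's argument rephrased; you have reconstructed it accurately.
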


So any $\beta\in O_L$ such that $O_L=O[\beta]$ gives an
  example of a $\beta\in\frak{gl}_n(O)$ which is smoothly regular with
  respect to $G=GL_n$. Then
Theorem \ref{th:main-result} gives the following result of 
Shintani \cite[Prop.4-2, Prop.4-3]{Shintani1968}

\begin{thm}
\label{th:main-result-for-gl-n-related-with-tamely-ramified-extension}
There is a bijection 
$\theta\mapsto
 \text{\rm Ind}_{GL_n(O_r,\beta)}^{GL_n(O_r)}\sigma_{\beta,\theta}$ 
of the set 
$$
  \left\{\begin{array}{l}
        \theta:\left(O_L/\frak{p}_L^{er}\right)^{\times}
               \to\Bbb C^{\times}:\text{\rm group homomorphism}\\
   \hphantom{\theta:}
        \text{\rm s.t. 
  $\theta(x\npmod{\frak{p}_L^{er}})
    =\tau\left(\varpi^{-r}T_{L/F}(\beta (x-1))\right)$}\\
   \hphantom{\theta:s.t.}
        \text{\rm for 
   $\forall x\in 1+\frak{p}_L^{el}$}
         \end{array}\right\}
$$
onto the set $\text{\rm Irr}(GL_n(O_r)\mid\psi_{\beta})$.
\end{thm}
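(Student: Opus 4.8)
The plan is to deduce Theorem~\ref{th:main-result-for-gl-n-related-with-tamely-ramified-extension} from Theorem~\ref{th:main-result-for-gl-n} by translating the parametrizing set of characters of $O_r[\beta_r]^\times$ into characters of $(O_L/\frak{p}_L^{er})^\times$. By Proposition~\ref{prop:generator-of-tame-extension-is-smoothly-regular}, since $O_L = O[\beta]$, the characteristic polynomial of $\overline\beta\in M_n(\Bbb F)$ is its minimal polynomial, so Theorem~\ref{th:main-result-for-gl-n} applies and gives a bijection $\theta\mapsto \text{\rm Ind}_{GL_n(O_r,\beta)}^{GL_n(O_r)}\sigma_{\beta,\theta}$ from characters of $O_r[\beta_r]^\times$ satisfying the prescribed congruence on $1_n+\varpi^l O[\beta]$ onto $\text{\rm Irr}(GL_n(O_r)\mid\psi_\beta)$. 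The first step is therefore purely ring-theoretic: identify $O_r[\beta_r]$ with $O_L/\frak{p}_L^{er}$ as $O_r$-algebras.

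\textbf{Step 1 (the ring isomorphism).} The embedding $L\hookrightarrow M_n(F)$ via the regular representation on an $O$-basis of $O_L$ restricts to $O[\beta]=O_L\subset M_n(O)$, since $O_L=O[\beta]$. Reducing modulo $\frak{p}^r$, I get a surjection $O_L\to O_r[\beta_r]$. I claim its kernel is $\varpi^r O_L = \frak{p}_L^{er}$: indeed $\varpi O_L = \frak{p}_L^e$ because $e$ is the ramification index, so $\varpi^r O_L = \frak{p}_L^{er}$, and the kernel of $O[\beta]\to O_r[\beta_r]$ is $\varpi^r O[\beta]\cap O[\beta] = \varpi^r O_L$. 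Hence $O_r[\beta_r]\cong O_L/\frak{p}_L^{er}$ as $O_r$-algebras, and taking unit groups, $O_r[\beta_r]^\times \cong (O_L/\frak{p}_L^{er})^\times$. Under this isomorphism the subgroup $1_n+\varpi^l O[\beta]$ corresponds to $1+\frak{p}_L^{el}$.

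\textbf{Step 2 (matching the congruence conditions).} I need to check that the defining congruence for $\theta$ in Theorem~\ref{th:main-result-for-gl-n}, namely $\theta(x\npmod{\frak{p}^r}) = \tau(\varpi^{-r}\text{\rm tr}(\beta(x-1_n)))$ for $x\in 1_n+\varpi^l O[\beta]$, becomes exactly the condition $\theta(x\npmod{\frak{p}_L^{er}}) = \tau(\varpi^{-r}T_{L/F}(\beta(x-1)))$ for $x\in 1+\frak{p}_L^{el}$. This is immediate once I recall that the trace of the regular representation of $L$ over $F$ is the field trace $T_{L/F}$: for $y\in L$ acting on $L$ by multiplication, $\text{\rm tr}(y) = T_{L/F}(y)$. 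Applying this with $y=\beta(x-1)\in O_L$ identifies the two right-hand sides, and the left-hand sides agree by the ring isomorphism of Step 1. Thus the two parametrizing sets of characters are carried onto one another bijectively.

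\textbf{Conclusion.} Composing the bijection of Theorem~\ref{th:main-result-for-gl-n} with the bijection of character sets from Steps 1--2 yields the asserted bijection onto $\text{\rm Irr}(GL_n(O_r)\mid\psi_\beta)$. I expect the only genuine subtlety to be bookkeeping: verifying that the chosen $O$-basis of $O_L$ realizes $\beta$ as an element of $M_n(O)$ whose reduction has characteristic polynomial equal to its minimal polynomial (so that Theorem~\ref{th:main-result-for-gl-n} is applicable in the first place) — but this is exactly the content of Proposition~\ref{prop:generator-of-tame-extension-is-smoothly-regular}, which is already available. A minor point worth stating is that the characteristic of $\Bbb F$ exceeds $n$: this holds because $L/F$ is tamely ramified of degree $n$, so the residue characteristic does not divide $e$, and together with the hypothesis of non-dyadic $F$ one arranges the hypothesis of Theorem~\ref{th:main-result-for-gl-n}; if the paper's convention already builds this in, the remark can be dropped. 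No further obstacle is anticipated: the argument is a clean transport of structure along the regular representation $L\hookrightarrow M_n(F)$.
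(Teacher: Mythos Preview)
Your approach is essentially the paper's: the paper simply says ``Then Theorem~\ref{th:main-result} gives the following result,'' and your Steps~1--2 spell out the ring identification $O_r[\beta_r]\cong O_L/\frak{p}_L^{er}$ and the matching of trace conditions that make this transport work. The substance is the same.

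One correction is needed in your final paragraph. You route through Theorem~\ref{th:main-result-for-gl-n}, whose stated hypothesis is that the characteristic of $\Bbb F$ exceeds $n$, and you claim this holds because $L/F$ is tamely ramified. That claim is false: tame ramification gives only $p\nmid e$, while $n=ef$, so $p$ may divide $f$ (e.g.\ $L/F$ unramified of degree $p$). The paper handles this by applying Theorem~\ref{th:main-result} directly rather than Theorem~\ref{th:main-result-for-gl-n}; see also Remark~\ref{remark:condition-of-characteristic-is-prime-to-n-i}, which explains that the hypothesis ``$p>n$'' in Theorem~\ref{th:main-result-for-gl-n} is only a convenient way to ensure $p\nmid n_i$ for the Jordan block sizes $n_i$, and that in the present setting $n_1=\cdots=n_r=e$, so tame ramification already gives $p\nmid n_i$. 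Either invoke Theorem~\ref{th:main-result} directly (smooth regularity of $\beta$ follows from $p\nmid e$ via Proposition~\ref{prop:regularity-in-terms-of-coefficient-of-root-vector}, and $G_\beta$ is connected), or cite this remark; do not assert $p>n$.
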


%\begin{rem}\label{remark:condition-of-characteristic-is-prime-to-n-i}
%We assume in Theorem \ref{th:main-result-for-gl-n} that the
%characteristic $p$ of $\Bbb F$ is bigger that $n$ in order to
%guarantee $p$ being prime to the $n_i$'s in 
%\eqref{eq:jordan-decomposition-of-beta}. On the other hand, in Theorem
%\ref{th:main-result-for-gl-n-related-with-tamely-ramified-extension},
%we have $n_1=\cdots=n_r=e$ (and hence $r=f=(L_0:F)$) and $L/F$ is
%tamely ramified, the condition on $p$ and $n_i$ is satisfied
%automatically. 
%\end{rem}

\subsection[]{Regularity for classical groups}
\label{subsec:smooth-regularity-for-classical-group}
Let us see the smooth regularity over $K=F$ or $\Bbb F$ 
for the special linear group, the
symplectic group and the orthogonal group. 
Explicit applications, as Theorem 
\ref{th:main-result-for-gl-n-related-with-tamely-ramified-extension} 
for $GL_n$,
of Theorem \ref{th:main-result} to these groups are given in section 
\ref{sec:classical-group}.

\subsubsection[]{Case of $SL_n$}
\label{subsubsec:smooth-regularity-for-sl-n}
For the algebraic group $SL_n$ over $K$, the situation is almost the
same as the case of $GL_n$ in subsection
\ref{subsec:detailed-description-of-the-case-gl-n}, except that 
$SL_{n,\beta}{\otimes}_OK$ is connected if and only if the characteristic
polynomial of $\overline\beta\in M_n(K)$ is separable, that is 
$n_1=\cdots=n_r=1$ in \eqref{eq:jordan-decomposition-of-beta}. 
%Note that the $O$-group scheme $SL_n$ satisfies the condition I) of
%subsection \ref{subsec:fundamental-setting} if and only if the
%characteristic of $\Bbb F$ is prime to $n$ (see 
%Proposition \ref{prop:sl-n-is-smooth-with-condition-i-ii-iii}). 
%In other words the
%condition on the characteristic in Proposition 
%\ref{prop:regularity-in-terms-of-coefficient-of-root-vector} is
%automatically satisfied when we will apply our main theorem 
%\ref{th:main-result} to $SL_n$. 

\subsubsection[]{Case of $Sp_{2n}$}
\label{subsubsec:smooth-regularity-for-sp-2n}
The algebraic group $Sp_{2n}$ over $K$ is defined by
$$
 Sp_{2n}=\{g\in GL_{2n}\mid gJ_n\,^tg=J_n\}
$$
where
$$
 J_n=\begin{bmatrix}
      0&I_n\\
     -I_n&0
     \end{bmatrix}
 \quad
 \text{\rm with}
 \quad
 I_n=\begin{bmatrix}
       &       &1\\
       &\addots& \\
      1&       &
     \end{bmatrix}.
$$
Then a maximal torus $T$ of $Sp_{2n}$ is
$$
 T=\left\{\begin{bmatrix}
           a&0\\
           0&^{\tau}a^{-1}
          \end{bmatrix}\biggm| a=\begin{bmatrix}
                                  a_1&      &   \\
                                     &\ddots&   \\
                                     &      &a_n
                                 \end{bmatrix}\right\}
$$
where $^{\tau}a=I_n\,^taI_n$. The Weyl group is generated by the
permutations among $\{a_1,\cdots,a_n\}$ and the sign changes 
$a_i\mapsto a_i^{\pm 1}$. Any maximal torus of $Sp_{2n}$ is 
$Sp_{2n}(\overline K)$-conjugate to $T$  ($\overline K$ is the
algebraic closure of $K$). 
Then for any $\beta\in\text{\rm Lie}(Sp_{2n})(K)$, the
semi-simple part $\beta_s$ of $\beta\in M_{2n}(K)$ is 
$Sp_{2n}(\overline K)$-conjugate to an element
$$
 \begin{bmatrix}
  A&0\\
  0&-^{\tau}A
 \end{bmatrix}
 \quad
 \text{\rm with}
 \quad
 A=\begin{bmatrix}
    a_11_{n_1}&      &         \\
              &\ddots&         \\
              &      &a_r1_{n_r}
   \end{bmatrix}
$$
of $\text{\rm Lie}(T)(\overline K)$, where $a_i\neq\pm a_j$ if 
$i\neq j$ and $a_i\neq 0$ for $1\leq i<r$ possibly $a_r=0$. Then 
the centralizer $L=Z_{Sp_{2n}}(\beta_s)^o$ is 
$$
 L=GL_{n_1}\times\cdots\times GL_{n_{r-1}}\times
   \begin{cases}
    GL_{n_r}&:a_r\neq 0,\\
    Sp_{2n_r}&:a_r=0.
   \end{cases}
$$
We can choose a system of positive roots so that the
nilpotent part $\beta_n$ of $\beta\in M_{2n}(K)$ is an upper
triangle matrix. Then Proposition
\ref{prop:regularity-in-terms-of-coefficient-of-root-vector} says that 
the following two statements are equivalent:
\begin{enumerate}
\item $\beta$ is smoothly regular with respect to $Sp_{2n}$ over $K$, 
\item the characteristic polynomial of $\beta\in M_{2n}(K)$
      is its minimal polynomial.
\end{enumerate}
Note that the centralizer $Sp_{2n,\beta}$ is connected
if and only if $\det\beta\neq 0$.

\subsubsection[]{Case of $SO_{2n+1}$}
\label{subsubsec:smooth-regularity-for-so-an+1}
The algebraic group $SO_{2n+1}$ over $K$ is defined by
$$
 SO_{2n+1}=\{g\in SL_{2n+1}\mid gS\,^tg=S\}
$$
where
$$
 S=\begin{bmatrix}
      & &I_n\\
      &1&   \\
   I_n& &
   \end{bmatrix}
 \quad
 \text{\rm with}
 \quad
 I_n=\begin{bmatrix}
       &       &1\\
       &\addots& \\
      1&       &
     \end{bmatrix}.
$$
Then a maximal torus $T$ of $SO_{2n+1}$ is
$$
 T=\left\{\begin{bmatrix}
           a& &   \\
            &1&   \\
            & &^{\tau}a^{-1}
          \end{bmatrix}\biggm| a=\begin{bmatrix}
                                  a_1&      &   \\
                                     &\ddots&   \\
                                     &      &a_n
                                 \end{bmatrix}\right\}
$$
where $^{\tau}a=I_n\,^taI_n$. The Weyl group is generated by the
permutations among $\{a_1,\cdots,a_n\}$ and the sign changes 
$a_i\mapsto a_i^{\pm 1}$. Any maximal torus of $SO_{2n+1}$ is 
$SO_{2n+1}(\overline K)$-conjugate to $T$  ($\overline K$ is the
algebraic closure of $K$). Then for any 
$\beta\in\text{\rm Lie}(SO_{2n+1})(K)$, the 
semi-simple part $\beta_s$ of $\beta\in M_{2n+1}(K)$ is 
$SO_{2n+1}(\overline K)$-conjugate to an element
$$
 \begin{bmatrix}
  A& &  \\
   &0&  \\
   & &-^{\tau}A
 \end{bmatrix}
 \quad
 \text{\rm with}
 \quad
 A=\begin{bmatrix}
    a_11_{n_1}&      &         \\
              &\ddots&         \\
              &      &a_r1_{n_r}
   \end{bmatrix}
$$
of $\text{\rm Lie}(T)(\overline K)$, where $a_i\neq\pm a_j$ if 
$i\neq j$ and $a_i\neq 0$ for $1\leq i<r$ possibly $a_r=0$. Then 
the centralizer $L=Z_{SO_{2n+1}}(\beta_s)^o$ is 
$$
 L=GL_{n_1}\times\cdots\times GL_{n_{r-1}}\times
   \begin{cases}
    GL_{n_r}&:a_r\neq 0,\\
    SO_{2n_r+1}&:a_r=0.
   \end{cases}
$$
We can choose a system of positive roots so that the
nilpotent part $\beta_n$ of $\beta\in M_{2n+1}(K)$ is an upper
triangle matrix. Then Proposition
\ref{prop:regularity-in-terms-of-coefficient-of-root-vector} says that 
the following two statements are equivalent:
\begin{enumerate}
\item $\beta$ is smoothly regular with respect to $SO_{2n+1}$ over
      $K$, 
\item the characteristic polynomial of $\beta\in M_{2n+1}(K)$
      is its minimal polynomial.
\end{enumerate}
In this case the centralizer $SO_{2n+1,\beta}$ is connected. 

\subsubsection[]{Case of $SO_{2n}$}
\label{subsubsec:smooth-regularity-for-so-2n}
The algebraic group $SO_{2n}$ over $K$ is defined by
$$
 SO_{2n}=\{g\in SL_{2n}\mid gS\,^tg=S\}
$$
where
$$
 S=\begin{bmatrix}
      &I_n\\
   I_n&
   \end{bmatrix}
 \quad
 \text{\rm with}
 \quad
 I_n=\begin{bmatrix}
       &       &1\\
       &\addots& \\
      1&       &
     \end{bmatrix}.
$$
Then a maximal torus $T$ of $SO_{2n}$ is
$$
 T=\left\{\begin{bmatrix}
           a&   \\
            &^{\tau}a^{-1}
          \end{bmatrix}\biggm| a=\begin{bmatrix}
                                  a_1&      &   \\
                                     &\ddots&   \\
                                     &      &a_n
                                 \end{bmatrix}\right\}
$$
where $^{\tau}a=I_n\,^taI_n$. The Weyl group is generated by the
permutations among $\{a_1,\cdots,a_n\}$ and the sign changes 
$$
 (a_1,\cdots,a_n)\mapsto
 (a_1^{\varepsilon_1},\cdots,a_n^{\varepsilon_n})
$$
($\varepsilon_i=\pm 1$) such that 
$\varepsilon_1\cdots\varepsilon_n=1$.
Any maximal torus of $SO_{2n}$ is 
$SO_{2n}(\overline K)$-conjugate to $T$  ($\overline K$ is the
algebraic closure of $K$). Then for any 
$\beta\in\text{\rm Lie}(SO_{2n})(K)$, the 
semi-simple part $\beta_s$ of $\beta\in M_{2n}(K)$ is 
$SO_{2n}(\overline K)$-conjugate to an element
$$
 \begin{bmatrix}
  A&  \\
   &-^{\tau}A
 \end{bmatrix}
 \quad
 \text{\rm with}
 \quad
 A=\begin{bmatrix}
    a_11_{n_1}&      &         \\
              &\ddots&         \\
              &      &a_r1_{n_r}
   \end{bmatrix}
$$
of $\text{\rm Lie}(T)(\overline K)$, where $a_i\neq a_j$ if 
$i\neq j$ and $a_i\neq 0$ for $1\leq i<r$ possibly $a_r=0$. If
$a_i\neq\pm a_j$ for all $i\neq j$ and $a_r\neq 0$, then 
the centralizer $L=Z_{SO_{2n}}(\beta_s)^o$ is 
$$
 L=GL_{n_1}\times\cdots GL_{n_r}.
$$
If $a_i=-a_j$ for some $i\neq j$, then $L$ contains the factor 
$GL_{n_1+n_j}$. If $a_r=0$, then $L$ contains the factor 
$SO_{2n_r}$ which breaks our argument. For example
$$
 \begin{bmatrix}
  0&1&1&0\\
   &0&0&-1\\
   & &0&-1\\
   & & &0
 \end{bmatrix}
$$
%satisfies the condition 
%``$c_{\alpha}\neq 0$ for all simple $\alpha\in\Phi^+$ in 
%\eqref{eq:root-expension-of-nilpotent-part-of-beta}'' 
is smoothly regular with respect to $SO_4$ over $K$ but the
characteristic polynomial is not the minimal polynomial. 
Any way we can choose a system of positive roots so that the
nilpotent part $\beta_n$ of $\beta\in M_{2n}(K)$ is an upper
triangle matrix. If $\det\beta\neq 0$ then Proposition
\ref{prop:regularity-in-terms-of-coefficient-of-root-vector} says that 
the following two statements are equivalent:
\begin{enumerate}
\item $\beta$ is smoothly regular with respect to $SO_{2n}$ over
      $K$, 
\item the characteristic polynomial of $\beta\in M_{2n}(K)$
      is its minimal polynomial.
\end{enumerate}
In this case the centralizer $SO_{2n,\beta}$ is connected.

%Applications of Theorem \ref{th:main-result} to
%the classical groups $SL_n$, $Sp_{2n}$ and $SO_{2n+1}$ will be given
%in section \ref{sec:classical-group}.

\subsection[]{Some results of Shechter \cite{Shechter2019}}
\label{subsec:discussionon-shechter-results}
Let $G$ be a connected reductive $O$-group scheme which satisfies the
conditions I), II) and III) of subsection 
\ref{subsec:fundamental-setting}. 
Take an adjoint $G(\Bbb F)$-orbit $\Omega\subset\frak{g}(\Bbb F)$
whose elements are smoothly regular with respect to $G$ over $\Bbb
F$. Take a  
$\beta\in\frak{g}(O)$ such that $\overline\beta\in\Omega$ and assume
that the two conditions of Theorem \ref{th:main-result} are satisfied
with respect to this $\beta$. Since we have canonical isomorphisms 
$$
 \frak{g}(\Bbb F)\,\tilde{\to}\,K_{m-1}(O_m),
 \quad
 \frak{g}_{\beta}(\Bbb F)\,\tilde{\to}\,
  G_{\beta}(O_m)\cap K_{m-1}(O_m)
$$
for any $m>1$, we have
$$
 |G(O_l)|=|G(\Bbb F)|\cdot q^{(l-1)\dim G},
 \qquad
 |G_{\beta}(O_l)|=|G_{\beta}(\Bbb F)|\cdot q^{(l-1)\text{\rm rank}\,G}
$$
for all $l>0$, where $\text{\rm rank}\,G$ is the absolute rank of $G$.  
Since the canonical group homomorphism 
$G(O_{l^{\prime}})\to G(\Bbb F)$ is surjective, the inverse image of 
$\Omega$ under the canonical surjection 
$\frak{g}(O_{l^{\prime}})\to\frak{g}(\Bbb F)$ is divided into 
$q^{(l^{\prime}-1)\text{\rm rank}\,G}$ adjoint 
$G(O_{l^{\prime}})$-orbits consisting of 
$$
 \frac{|G(O_{l^{\prime}})|}
      {|G_{\beta}(O_{l^{\prime}})|}
 =\sharp\Omega\cdot 
  q^{(l^{\prime}-1)(\dim G-\text{\rm rank}\,G)}.
$$
elements. On the other hand, we have
\begin{align*}
 &\sharp\left\{\theta\in G_{\beta}(O_r)\sphat\;\;\;
          \text{\rm s.t. $\theta=\psi_{\beta}$ on 
                $G_{\beta}(O_r)\cap K_l(O_r)$}\right\}\\
 =&\left(G_{\beta}(O_r):G_{\beta}(O_r)\cap K_l(O_r)\right)
 =|G_{\beta}(O_l)|\\
 =&|G_{\beta}(\Bbb F)|\cdot q^{(l-1)\text{\rm rank}\,G}
 =\frac{|G(\Bbb F)|}
       {\sharp\Omega}\cdot q^{(l-1)\text{\rm rank}\,G}.
\end{align*}
Then Theorem \ref{th:main-result} implies that the number of the
irreducible representations of $G(O_r)$ corresponding to the adjoint
orbit $\Omega\subset\frak{g}(\Bbb F)$ is 
$$
 \frac{|G(\Bbb F)|}
       {\sharp\Omega}\cdot q^{(l-1)\text{\rm rank}\,G}\times 
 q^{(l^{\prime}-1)\text{\rm rank}\,G}
 =\frac{|G(\Bbb F)|}
       {\sharp\Omega}\cdot q^{(r-2)\text{\rm rank}\,G}.
$$
This is the first statement of Theorem I of Shechter
\cite{Shechter2019}. This means that we have constructed the
irreducible representations of Shechter \cite{Shechter2019}. 

The construction of $\sigma_{\beta,\theta}$ presented in subsection 
\ref{subsec:proof-of-main-result-for-even-r} (when $r$ is even) and 
section \ref{sec:schur-multiplier} (when $r$ is odd) shows that
$$
 \dim\sigma_{\beta,\theta}
 =\begin{cases}
   1&:\text{\rm $r$ is even},\\
   q^{\frac 12\dim_{\Bbb F}(\frak{g}(\Bbb F)/\frak{g}_{\beta}(\Bbb F))}
   =q^{(\dim G-\text{\rm rank}\,G)/2}
     &:\text{\rm $r$ is odd}.
  \end{cases}
$$
On the other hand, we have
\begin{align*}
 \left(G(O_r):G(O_r,\beta)\right)
 &=\frac{|G(O_{l^{\prime}})|}
        {|G_{\beta}(O_{l^{\prime}})|}\\
 &=\sharp\Omega\cdot q^{(l^{\prime}-1)(\dim G-\text{\rm rank}\,G)},
\end{align*}
because we have $G(O_r,\beta)=G_{\beta}(O_r)\cdot K_{l^{\prime}}(O_r)$
and
$$
 |K_{l^{\prime}}(O_r)|=\frac{|G(O_r)|}
                            {|G(O_{l^{\prime}})|},
 \quad
 |G_{\beta}(O_r)\cap K_{l^{\prime}}(O_r)|
 =\frac{|G_{\beta}(O_r)|}
       {|G_{\beta}(O_{l^{\prime}})|}.
$$
Then we have
\begin{align*}
 \dim\delta_{\beta,\theta}
 &=\left(G(O_r):G(O_r,\beta)\right)\cdot\dim\sigma_{\beta,\theta}\\
 &=\sharp\Omega\cdot q^{(r-2)(\dim G-\text{\rm rank}\,G)/2}
\end{align*}
which is the second statement of Theorem I of Shechter
\cite{Shechter2019}. 

\section{Schur multiplier}
\label{sec:schur-multiplier}
Let $G\subset GL_n$ be a closed $\Bbb F$-algebraic subgroup and 
$\frak{g}$ the Lie algebra scheme of $G$ which is a closed affine 
$\Bbb F$-subscheme of the Lie algebra scheme $\frak{gl}_n$ of
$GL_n$. Let us assume that the trace form
$$
 B:\frak{g}(\Bbb F)\times\frak{g}(\Bbb F)\to\Bbb F
 \quad
 ((X,Y)\mapsto\text{\rm tr}(XY))
$$
is non-degenerate. Fix a $\beta\in\frak{g}(\Bbb F)$ such that 
$\frak{g}_{\beta}(\Bbb F)\lvertneqq\frak{g}(\Bbb F)$.

We have fixed a continuous unitary character $\tau$ of the additive
group $F$ such that 
$$
 \{x\in F\mid\tau(xO)=1\}=O.
$$
Define an additive character $\overline\tau$ of $\Bbb F$ by 
$\overline\tau(\overline x)=\tau(\varpi^{-1}x)$. 

\subsection[]{Definition of a Schur multiplier}
\label{subsec:definition-of-schur-multiplier}
The non-zero $\Bbb F$-vector space 
$\Bbb V_{\beta}=\frak{g}(\Bbb F)/\frak{g}_{\beta}(\Bbb F)$ has a
symplectic form 
$$
 \langle \dot X,\dot Y\rangle_{\beta}=B([X,Y],\overline\beta)
$$
where $\dot X=X\pmod{\frak{g}_{\beta}(\Bbb F)}\in\Bbb V_{\beta}$ with 
$X\in\frak{g}_{\beta}(\Bbb F)$.  
Then $g\in G_{\beta}(\Bbb F)$ gives an element $\sigma_g$ of 
 the symplectic group $Sp(\Bbb V_{\beta})$ defined by 
$$
 X\npmod{\frak{g}_{\beta}(\Bbb F)}\mapsto
 \text{\rm Ad}(g)^{-1}X\npmod{\frak{g}_{\beta}(\Bbb F)}.
$$
Note that the group $Sp(\Bbb V_{\beta})$ acts on $\Bbb V_{\beta}$ from
right. Let 
$v\mapsto[v]$ be a $\Bbb F$-linear section on $\Bbb V_{\beta}$ of the
exact sequence 
\begin{equation}
 0\to\frak{g}_{\beta}(\Bbb F)
  \to\frak{g}(\Bbb F)\to\Bbb V_{\beta}\to 0.
\label{eq:fundamental-exact-sequence-of-lie-alg-and-symplectic-space}
\end{equation}
For any $v\in\Bbb V_{\beta}$ and $g\in G_{\beta}(\Bbb F)$, put
$$
 \gamma(v,g)=\gamma_{\frak g}(v,g)
 =\text{\rm Ad}(g)^{-1}[v]-[v\sigma_g]\in\frak{g}_{\beta}(\Bbb F).
$$
Take a character $\rho\in\frak{g}_{\beta}(\Bbb F)\sphat$. Then there
exists uniquely a $v_g\in\Bbb V_{\beta}$ such that
$$
 \rho(\gamma(v,g))=\overline\tau(\langle v,v_g\rangle_{\beta})
$$
for all $v\in\Bbb V_{\beta}$. Note that $v_g\in\Bbb V_{\beta}$ depends
on $\rho$ as well as the section $v\mapsto[v]$. Let
$$
 G_{\beta}(\Bbb F)^{(c)}
 =\{g\in G(\Bbb F)\mid\text{\rm Ad}(g)Y=Y\;\text{\rm for}\;
                        \forall Y\in\frak{g}_{\beta}(\Bbb F)\}
$$
be the centralizer of $\frak{g}_{\beta}(\Bbb F)$ in $G(\Bbb F)$, which
is a subgroup of $G_{\beta}(\Bbb F)$. Then for any 
$g,h\in G_{\beta}(\Bbb F)^{(c)}$, we have
\begin{equation}
 v_{gh}=v_h\sigma_g^{-1}+v_g
\label{eq:multiplication-formula-of-v-epsilon}
\end{equation}
because $\gamma(v,gh)=\gamma(v,g)+\gamma(v\sigma_g^{-1},h)$ for all
$v\in\Bbb V_{\beta}$. Put
$$
 c_{\beta,\rho}(g,h)
 =\overline\tau(2^{-1}\langle v_g,v_{gh}\rangle_{\beta})
$$
for $g,h\in G_{\beta}(\Bbb F)^{(c)}$. Then the relation 
\eqref{eq:multiplication-formula-of-v-epsilon} shows that 
$c_{\beta,\rho}\in Z^2(G_{\beta}(\Bbb F)^{(c)},\Bbb C^{\times})$ is a
2-cocycle with trivial action of $G_{\beta}(\Bbb F)^{(c)}$ on 
$\Bbb C^{\times}$. Moreover we have

\begin{prop}
\label{prop:schur-multiplier-associated-with-finite-symplectic-space}
The cohomology class 
$[c_{\beta,\rho}]\in H^2(G_{\beta}(\Bbb F)^{(c)},\Bbb C^{\times})$ is
independent of the choice of the $\Bbb F$-linear section 
$v\mapsto[v]$.
\end{prop}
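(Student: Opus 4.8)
The plan is to compare the 2-cocycles arising from two different $\Bbb F$-linear sections and show they differ by a coboundary. Let $v\mapsto[v]$ and $v\mapsto[v]'$ be two $\Bbb F$-linear sections of the exact sequence \eqref{eq:fundamental-exact-sequence-of-lie-alg-and-symplectic-space}. Their difference $v\mapsto\delta(v):=[v]'-[v]$ is an $\Bbb F$-linear map $\Bbb V_{\beta}\to\frak{g}_{\beta}(\Bbb F)$. First I would record how the basic data $\gamma(v,g)$, and hence $v_g$, transform: writing $\gamma'(v,g)=\text{\rm Ad}(g)^{-1}[v]'-[v\sigma_g]'$ for the primed section, one gets
$$
 \gamma'(v,g)-\gamma(v,g)=\text{\rm Ad}(g)^{-1}\delta(v)-\delta(v\sigma_g).
$$
For $g\in G_{\beta}(\Bbb F)^{(c)}$ the operator $\text{\rm Ad}(g)$ acts trivially on $\frak{g}_{\beta}(\Bbb F)$, so this simplifies to $\delta(v)-\delta(v\sigma_g)$. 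Applying $\rho$ and using that $\rho(\delta(v))=\overline\tau(\langle v,w\rangle_{\beta})$ for a unique $w=w_{\delta}\in\Bbb V_{\beta}$ (non-degeneracy of $\langle\,,\rangle_{\beta}$), together with $\rho(\gamma'(v,g))=\overline\tau(\langle v,v_g'\rangle_{\beta})$ and the right-action compatibility $\langle v\sigma_g,w\rangle_{\beta}=\langle v,w\sigma_g^{-1}\rangle_{\beta}$, I obtain the clean transformation rule
$$
 v_g'=v_g+w_{\delta}-w_{\delta}\sigma_g^{-1}=v_g+w_{\delta}(1-\sigma_g^{-1}).
$$

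The second step is to plug this into the formula $c_{\beta,\rho}(g,h)=\overline\tau\bigl(2^{-1}\langle v_g,v_{gh}\rangle_{\beta}\bigr)$ and expand, using bilinearity and alternativity of $\langle\,,\rangle_{\beta}$. Set $u_g:=w_{\delta}(1-\sigma_g^{-1})$, so $v_g'=v_g+u_g$ and similarly $v_{gh}'=v_{gh}+u_{gh}$. Then
$$
 \langle v_g',v_{gh}'\rangle_{\beta}
 =\langle v_g,v_{gh}\rangle_{\beta}
 +\langle v_g,u_{gh}\rangle_{\beta}
 +\langle u_g,v_{gh}\rangle_{\beta}
 +\langle u_g,u_{gh}\rangle_{\beta}.
$$
So $c_{\beta,\rho}'(g,h)/c_{\beta,\rho}(g,h)=\overline\tau\bigl(2^{-1}(\langle v_g,u_{gh}\rangle_{\beta}+\langle u_g,v_{gh}\rangle_{\beta}+\langle u_g,u_{gh}\rangle_{\beta})\bigr)$, and it remains to exhibit this quotient as a coboundary $\partial b(g,h)=b(h)b(gh)^{-1}b(g)$ for a suitable function $b:G_{\beta}(\Bbb F)^{(c)}\to\Bbb C^{\times}$. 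The natural candidate is $b(g)=\overline\tau\bigl(\langle v_g,w_{\delta}\rangle_{\beta}+(\text{quadratic correction in }u_g)\bigr)$; I would verify the coboundary identity by feeding in the multiplication law $v_{gh}=v_h\sigma_g^{-1}+v_g$ from \eqref{eq:multiplication-formula-of-v-epsilon} and the obvious relation $u_{gh}=w_{\delta}(1-\sigma_{gh}^{-1})=u_g+u_h\sigma_g^{-1}$ (since $\sigma_{gh}=\sigma_h\sigma_g$ on $\Bbb V_{\beta}$ for the right action), then matching terms. The Weil-representation flavour of the computation — a difference of quadratic forms on a symplectic space being a coboundary — is exactly the phenomenon that makes the metaplectic cocycle section-independent, so I expect the bookkeeping to close up.

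The main obstacle is the term $\langle u_g,u_{gh}\rangle_{\beta}$, which is genuinely quadratic in $w_{\delta}$ and does not obviously split; handling it requires choosing the quadratic correction in $b(g)$ carefully, most cleanly by introducing the quadratic form $Q(x)=\overline\tau\bigl(2^{-1}\langle x\sigma_g^{-1},x\rangle_{\beta}\bigr)$-type expressions attached to the Cayley-like operator $1-\sigma_g^{-1}$ and exploiting that $\sigma_g$ is symplectic (so $\langle x\sigma_g,y\sigma_g\rangle_{\beta}=\langle x,y\rangle_{\beta}$). Once the correction term is pinned down so that the $u$-only contributions assemble into $\partial(\text{something})$ on their own, the mixed terms $\langle v_g,u_{gh}\rangle_{\beta}+\langle u_g,v_{gh}\rangle_{\beta}$ collapse under \eqref{eq:multiplication-formula-of-v-epsilon} to give the linear part of $b$, and the proof concludes. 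I would not grind through the full term-matching here, but the structure above shows $[c_{\beta,\rho}']=[c_{\beta,\rho}]$ in $H^2(G_{\beta}(\Bbb F)^{(c)},\Bbb C^{\times})$.
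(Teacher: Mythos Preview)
Your approach is exactly the paper's: compare two linear sections, derive the transformation law $v_g'=v_g+w_\delta-w_\delta\sigma_g^{-1}$, and exhibit the ratio $c'_{\beta,\rho}/c_{\beta,\rho}$ as a coboundary. The paper simply completes what you leave as an exercise by writing down the explicit cochain $\alpha(g)=\overline\tau\bigl(2^{-1}\langle v_g'-v_{g^{-1}},\delta\rangle_\beta\bigr)$ (its $\delta$ is your $w_\delta$ up to a sign) and checking $c'_{\beta,\rho}(g,h)=c_{\beta,\rho}(g,h)\cdot\alpha(h)\alpha(gh)^{-1}\alpha(g)$ directly.

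Your concern about the purely quadratic term $\langle u_g,u_{gh}\rangle_\beta$ is overstated: since each $\sigma_g$ is symplectic, a one-line expansion gives $\langle u_g,u_{gh}\rangle_\beta=\phi(g)+\phi(h)-\phi(gh)$ with $\phi(g)=\langle w_\delta,w_\delta\sigma_g^{-1}\rangle_\beta$, so that piece is already an additive coboundary on its own and no delicate ``quadratic correction'' is needed. (One small slip: for this right action $\sigma_{gh}=\sigma_g\sigma_h$, not $\sigma_h\sigma_g$; your formula $u_{gh}=u_g+u_h\sigma_g^{-1}$ is nevertheless correct.)
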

\begin{proof}
Take another $\Bbb F$-linear section $v\mapsto[v]^{\prime}$ 
with respect to which
we will define $\gamma^{\prime}(v,g)\in\frak{g}_{\beta}$ 
and $v^{\prime}_g\in\Bbb V_{\beta}$ as above. 
Then there exists a $\delta\in\Bbb V_{\beta}$ such that 
$\rho([v]-[v]^{\prime})
 =\overline\tau\left(\langle v,\delta\rangle_{\beta}\right)$ 
for all $v\in\Bbb V_{\beta}$. We have 
$v_g^{\prime}=v_g+\delta-\delta\sigma_g$ for all 
$g\in G_{\beta}(\Bbb F)^{(c)}$. So if we put 
$\alpha(g)
 =\overline\tau\left(2^{-1}\langle 
    v_g^{\prime}-v_{g^{-1}},\delta\rangle_{\beta}
            \right)$ 
for $g\in G_{\beta}(\Bbb F)^{(c)}$, then we have
$$
 \overline\tau\left(2^{-1}\langle v_g^{\prime},
           v_{gh}^{\prime}\rangle_{\beta}\right)
 =\overline\tau\left(2^{-1}\langle v_g,
           v_{gh}\rangle_{\beta}\right)\cdot
  \alpha(h)\alpha(gh)^{-1}\alpha(g)
$$
for all $g,h\in G_{\beta}(\Bbb F)^{(c)}$.
\end{proof}

\subsection[]{Relation to an over group}
\label{subsec:schur-multiplier-and-over-group}
Let us assume that there exists a closed smooth $O$-group subscheme 
$H\subset GL_n$ of which our $G$ is a closed $O$-group subscheme and
that the trace form
$$
 B:\frak{h}(\Bbb F)\times\frak{h}(\Bbb F)\to\Bbb F
$$ 
is non-degenerate where $\frak{h}$ is the Lie algebra scheme of $H$. 
Then we have 
$$
 \frak{h}(\Bbb F)=\frak{g}(\Bbb F)\oplus\frak{g}(\Bbb F)^{\perp}
$$
where 
$\frak{g}(\Bbb F)^{\perp}
 =\{X\in\frak{h}(\Bbb F)\mid B(X,\frak{g}(\Bbb F))=0\}$ 
is the orthogonal complement of $\frak{g}(\Bbb F)$ in 
$\frak{h}(\Bbb F)$. 

Take a $\beta\in\frak{g}(O)$ such that 
$\frak{g}_{\beta}(\Bbb F)\lvertneqq\frak{g}(\Bbb F)$. Then 
$\beta\in\frak{h}(O)$ and 
$\frak{h}_{\beta}(\Bbb F)\lvertneqq\frak{h}(\Bbb F)$ where 
$\frak{h}_{\beta}=Z_{\frak h}(\beta)$ is the centralizer. We have
decompositions 
$$
 \frak{h}_{\beta}(\Bbb F)
 =\frak{g}_{\beta}(\Bbb F)\oplus
  \left(\frak{g}(\Bbb F)^{\perp}\right)_{\beta}
$$
where 
$\left(\frak{g}(\Bbb F)^{\perp}\right)_{\beta}
 =\frak{h}_{\beta}(\Bbb F)\cap\frak{g}(\Bbb F)^{\perp}$, and
$$
 \widetilde{\Bbb V}_{\beta}
 =\frak{h}(\Bbb F)/\frak{h}_{\beta}(\Bbb F)
 =\Bbb V_{\beta}\oplus
  \left(\frak{g}(\Bbb F)^{\perp}/
   \left(\frak{g}(\Bbb F)^{\perp}\right)_{\beta}\right)
$$
is an orthogonal decomposition of symplectic spaces. 

Let $v\mapsto[v]$ be a $\Bbb F$-linear section of the exact sequence 
$$
 0\to\frak{h}_{\beta}(\Bbb F)\to\frak{h}(\Bbb F)
  \to\widetilde{\Bbb V}_{\beta}\to 0
$$
of $\Bbb F$-vector space such that 
$[\Bbb V_{\beta}]\subset\frak{g}(\Bbb F)$
and  
$[\frak{g}(\Bbb F)^{\perp}/\left(\frak{g}(\Bbb F)^{\perp}\right)_{\beta}]
 \subset\frak{g}(\Bbb F)^{\perp}$. 

Take $\rho\in\frak{g}_{\beta}(\Bbb F)\sphat$ and put
$$
 \widetilde\rho:\frak{h}_{\beta}(\Bbb F)
 =\frak{g}_{\beta}(\Bbb F)\oplus
    \left(\frak{g}(\Bbb F)^{\perp}\right)_{\beta}
 \xrightarrow{\text{\rm projection}}\frak{g}_{\beta}(\Bbb F)
 \xrightarrow{\rho}\Bbb C^{\times}.
$$
For any $g\in G_{\beta}(\Bbb F)\subset H_{\beta}(\Bbb F)$, there
exists uniquely a $v_g\in\Bbb V_{\beta}$ such that
$$
 \rho\left(\gamma_{\frak g}(v,g)\right)
 =\overline\tau\left(\langle v,v_g\rangle_{\beta}\right)
$$
for all $v\in\Bbb V_{\beta}$. Then we have
$$
 \widetilde\rho\left(\gamma_{\frak{h}}(v,v_g)\right)
 =\overline\tau\left(\langle v,v_g\rangle_{\beta}\right)
$$
for all $v\in\widetilde{\Bbb V}_{\beta}$. In fact if we put 
$v=v^{\prime}+v^{\prime\prime}$ with $v^{\prime}\in\Bbb V_{\beta}$ and 
$v^{\prime\prime}\in
 \frak{g}(\Bbb F)^{\perp}/\left(\frak{g}(\Bbb F)^{\perp}\right)_{\beta}$,
then we have 
$\gamma_{\frak{h}}(v,g)
 =\gamma_{\frak g}(v^{\prime},g)
 +\gamma_{\frak{h}}(v^{\prime\prime},g)$ with 
$\gamma_{\frak{h}}(v^{\prime\prime},g)\in
 \left(\frak{g}(\Bbb F)^{\perp}\right)_{\beta}$, since 
$$
 \text{\rm Ad}(g)\frak{g}(\Bbb F)^{\perp}=\frak{g}(\Bbb F)^{\perp},
 \qquad
 \text{\rm Ad}(g)\left(\frak{g}(\Bbb F)^{\perp}\right)_{\beta}
 =\left(\frak{g}(\Bbb F)^{\perp}\right)_{\beta}.
$$
Then we have
\begin{align*}
 \widetilde\rho\left(\gamma_{\frak{h}}(v,g)\right)
 &=\rho\left(\gamma_{\frak g}(v^{\prime},g)\right)
  =\overline\tau\left(\langle v^{\prime},v_g\rangle_{\beta}\right)\\
 &=\overline\tau\left(\langle v,v_g\rangle_{\beta}\right)
\end{align*}
because $\langle v^{\prime\prime},v_g\rangle_{\beta}=0$. Hence we have

\begin{prop}
\label{prop:regular-schur-multiplier-is-restriction-from-upper-group}
If $G_{\beta}(\Bbb F)^{(c)}\subset H_{\beta}(\Bbb F)^{(c)}$ then 
the Schur multiplier 
      $[c_{\beta,\rho}]
       \in H^2(G_{\beta}(\Bbb F)^{(c)},\Bbb C^{\times})$ is the image
       under the restriction mapping 
$$
 \text{\rm Res}:H^2(H_{\beta}(\Bbb F)^{(c)},\Bbb C^{\times})\to
                H^2(G_{\beta}(\Bbb F)^{(c)},\Bbb C^{\times})
$$
      of the Schur multiplier 
      $[c_{\beta,\widetilde\rho}]
       \in H^2(H_{\beta}(\Bbb F)^{(c)},\Bbb C^{\times})$.
\end{prop}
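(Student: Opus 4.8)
The plan is to show directly that the cocycle $c_{\beta,\rho}$ on $G_\beta(\Bbb F)^{(c)}$ is the restriction of the cocycle $c_{\beta,\widetilde\rho}$ on $H_\beta(\Bbb F)^{(c)}$, using the section $v\mapsto[v]$ of the exact sequence for $\widetilde{\Bbb V}_\beta$ that was chosen above to be compatible with the orthogonal decomposition $\widetilde{\Bbb V}_\beta=\Bbb V_\beta\oplus\bigl(\frak g(\Bbb F)^\perp/(\frak g(\Bbb F)^\perp)_\beta\bigr)$. By Proposition \ref{prop:schur-multiplier-associated-with-finite-symplectic-space} the cohomology class $[c_{\beta,\widetilde\rho}]$ does not depend on which section of the $H$-sequence we use, so it suffices to work with this particular one. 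The restriction of this section to $\Bbb V_\beta$ lands in $\frak g(\Bbb F)$, so it is a legitimate section of the $G$-sequence \eqref{eq:fundamental-exact-sequence-of-lie-alg-and-symplectic-space}, and again by Proposition \ref{prop:schur-multiplier-associated-with-finite-symplectic-space} we may compute $[c_{\beta,\rho}]$ with it.

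First I would record, for $g\in G_\beta(\Bbb F)^{(c)}\subset H_\beta(\Bbb F)^{(c)}$, the identity
$$
 \widetilde\rho\bigl(\gamma_{\frak h}(v,g)\bigr)
 =\overline\tau\bigl(\langle v,v_g\rangle_\beta\bigr)
 \qquad(v\in\widetilde{\Bbb V}_\beta),
$$
which was established in the paragraph preceding the statement; this says precisely that the vector $\widetilde v_g\in\widetilde{\Bbb V}_\beta$ attached to $g$ and $\widetilde\rho$ coincides with $v_g\in\Bbb V_\beta\subset\widetilde{\Bbb V}_\beta$ (here one uses that the symplectic form on $\widetilde{\Bbb V}_\beta$ restricts to $\langle\,,\,\rangle_\beta$ on $\Bbb V_\beta$ and that $\Bbb V_\beta$ and its complement are orthogonal, so the defining equation determines the same vector). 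Next I would note that the inclusion $G_\beta(\Bbb F)^{(c)}\subset H_\beta(\Bbb F)^{(c)}$ guarantees $\sigma_g$ acts trivially on the complement $\frak g(\Bbb F)^\perp/(\frak g(\Bbb F)^\perp)_\beta$ and equals the old $\sigma_g$ on $\Bbb V_\beta$, so the multiplication formula \eqref{eq:multiplication-formula-of-v-epsilon} for $H$ restricts to that for $G$, consistently with $\widetilde v_g=v_g$.

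With this in hand the computation is immediate: for $g,h\in G_\beta(\Bbb F)^{(c)}$,
$$
 \bigl(\mathrm{Res}\,c_{\beta,\widetilde\rho}\bigr)(g,h)
 =\overline\tau\bigl(2^{-1}\langle\widetilde v_g,\widetilde v_{gh}\rangle_{\widetilde{\Bbb V}_\beta}\bigr)
 =\overline\tau\bigl(2^{-1}\langle v_g,v_{gh}\rangle_\beta\bigr)
 =c_{\beta,\rho}(g,h),
$$
where the middle equality uses $\widetilde v_g=v_g$, $\widetilde v_{gh}=v_{gh}$, and the fact that the symplectic form on $\widetilde{\Bbb V}_\beta$ restricts to $\langle\,,\,\rangle_\beta$ on $\Bbb V_\beta$. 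Hence $\mathrm{Res}\,c_{\beta,\widetilde\rho}=c_{\beta,\rho}$ as cocycles, which gives $\mathrm{Res}[c_{\beta,\widetilde\rho}]=[c_{\beta,\rho}]$ in $H^2$. The only point needing a little care — and the step I expect to be the mild obstacle — is checking that the chosen section of the $H$-sequence really does restrict to a section of the $G$-sequence with values in $\frak g(\Bbb F)$ and that the two notions of $\gamma$ are compatible on it (i.e.\ $\gamma_{\frak h}(v,g)=\gamma_{\frak g}(v,g)$ for $v\in\Bbb V_\beta$, which follows from $\mathrm{Ad}(g)\frak g(\Bbb F)=\frak g(\Bbb F)$ and $[\Bbb V_\beta]\subset\frak g(\Bbb F)$); once these bookkeeping compatibilities are verified, invoking Proposition \ref{prop:schur-multiplier-associated-with-finite-symplectic-space} twice finishes the argument.
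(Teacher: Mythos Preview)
Your approach is essentially identical to the paper's: the paper establishes in the paragraphs preceding the proposition that, with the chosen compatible section, one has $\widetilde\rho(\gamma_{\frak h}(v,g))=\overline\tau(\langle v,v_g\rangle_\beta)$ for all $v\in\widetilde{\Bbb V}_\beta$, i.e.\ $\widetilde v_g=v_g$, and then the proposition follows at once from the definition of the cocycle.

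One small inaccuracy worth flagging: your claim that $\sigma_g$ acts \emph{trivially} on the complement $\frak g(\Bbb F)^\perp/(\frak g(\Bbb F)^\perp)_\beta$ is neither justified by the hypothesis $G_\beta(\Bbb F)^{(c)}\subset H_\beta(\Bbb F)^{(c)}$ (which only says that $g$ centralizes $\frak h_\beta(\Bbb F)$, not all of $\frak g(\Bbb F)^\perp$) nor needed for the argument. What actually matters is that $\text{Ad}(g)$ preserves both $\frak g(\Bbb F)$ and $\frak g(\Bbb F)^\perp$, so $\widetilde\sigma_g$ preserves the orthogonal decomposition of $\widetilde{\Bbb V}_\beta$ and restricts to the old $\sigma_g$ on $\Bbb V_\beta$; since all the $\widetilde v_g$ lie in $\Bbb V_\beta$, this is already enough for the cocycle identity and for the consistency of the multiplication formulas.
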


\section{Proof of Theorem \ref{th:main-result} for odd $r$}
\label{sec:weil-representation}
In this section, we will give a proof of Theorem \ref{th:main-result}
in the case of odd $r$. 

Let $G\subset GL_n$ be a smooth $O$-group scheme which satisfies the
conditions described in subsection 
\ref{subsec:fundamental-setting}. Take a $\beta\in\frak{g}(O)$ such
that the centralizer $G_{\beta}$ is commutative and smooth over $O$. 
Put $r=2l-1$ with an integer $l\geq 2$ and put $l^{\prime}=l-1\geq 1$.

\subsection[]{Construction of irreducible representations}
\label{subsec:crude-weil-representation}
We have a chain of canonical surjections
\begin{equation}
 \heartsuit:K_{l-1}(O_r)\to K_{l-1}(O_{r-1})\,\tilde{\to}\,
        \frak{g}(O_{l-1})\to\frak{g}(\Bbb F)
\label{eq:canonical-surjection-of-k-l-1-to-g(f)}
\end{equation}
defined by
\begin{align*}
 1+\varpi^{l-1}X\nnpmod{\frak{p}^r}
 &\mapsto
 1+\varpi^{l-1}X\nnpmod{\frak{p}^{r-1}}\\
 &\mapsto
  X\nnpmod{\frak{p}^{l-1}}\mapsto
  \overline X=X\npmod{\frak p}.
\end{align*}
Here we use the condition II) of
subsection \ref{subsec:fundamental-setting}. 
Let us denote by $Z(O_r,\beta)$ the inverse image 
under the surjection $\heartsuit$ of $\frak{g}_{\beta}(\Bbb F)$. Then 
$Z(O_r,\beta)$ is a normal subgroup of $K_{l-1}(O_r)$ containing 
$K_l(O_r)$ as the kernel of $\heartsuit$. 

Let us denote by $Y_{\beta}$ the set of the group
homomorphisms $\psi$ of $Z(O_r,\beta)$ to $\Bbb C^{\times}$ 
such that $\psi=\psi_{\beta}$ on $K_l(O_r)$. Then
a bijection of $\frak{g}_{\beta}(\Bbb F)\sphat$ onto $Y_{\beta}$ is
given by
$$
 \rho\mapsto\psi_{\beta,\rho}
            =\widetilde{\psi}_{\beta}\cdot(\rho\circ\heartsuit),
$$
where a group homomorphism 
$\widetilde{\psi}_{\beta}:Z(O_r,\beta)\to\Bbb C^{\times}$ 
is defined by 
$$
 1+\varpi^{l-1}X\npmod{\frak{p}^r}\mapsto
 \tau\left(\varpi^{-l}B(X,\beta)-(2\varpi)^{-1}B(X^2,\beta)\right)
$$
with $\overline X=X\npmod{\frak{p}}\in\frak{g}_{\beta}(\Bbb F)$. 

Take a $\psi\in Y_{\beta}$. 
For two elements 
$$
 x=1+\varpi^{l-1}X\npmod{\frak{p}^r},
 \quad
 y=1+\varpi^{l-1}Y\npmod{\frak{p}^r}
$$ 
of $K_{l-1}(O_r)$, we have 
$x^{-1}=1-\varpi^{l-1}X+2^{-1}\varpi^{2l-2}X^2\npmod{\frak{p}^r}$ so
that we have
$$
 xyx^{-1}y^{-1}
 =1+\varpi^{r-1}[X,Y]\npmod{\frak{p}^r}
 \in K_{r-1}(O_r)\subset K_l(O_r)
$$
and so 
$\psi_{\beta}(xyx^{-1}y^{-1})
 =\tau\left(\varpi^{-1}B(X,\text{\rm ad}(Y)\beta)\right)$. 
Hence we have 
$$
 \psi(xyx^{-1}y^{-1})=\psi_{\beta}(xyx^{-1}y^{-1})=1
$$
for all $x\in K_{l-1}(O_r)$ and $y\in Z(O_r,\beta)$ so
that we can define 
$$
 D_{\psi}:K_{l-1}(O_r)/Z(O_r,\beta)\times K_{l-1}(O_r)/Z(O_r,\beta)
          \to\Bbb C^{\times}
$$
by
$$
 D_{\psi}(\dot g,\dot h)
 =\psi(ghg^{-1}h^{-1})
 =\psi_{\beta}(ghg^{-1}h^{-1})
 =\tau\left(\varpi^{-1}B([X,Y],\beta)\right)
$$
for 
$g=(1+\varpi^{l-1}X)\npmod{\frak{p}^r}, 
 h=(1+\varpi^{l-1}Y)\npmod{\frak{p}^r}
 \in K_{l-1}(O_r)$. Note that $D_{\psi}$ is non-degenerate. 
Then Proposition 3.1.1 of \cite{Takase2016} gives

\begin{prop}
\label{prop:existence-of-fundamental-rep-associated-with-psi} 
For any $\psi=\psi_{\beta,\rho}\in Y_{\beta}$ with 
$\rho\in\frak{g}_{\beta}(\Bbb F)\sphat$, 
there exists unique irreducible
representation $\pi_{\psi}$ of $K_{l-1}(O_r)$ such that 
$\langle\psi,\pi_{\psi}\rangle_{Z(O_r,\beta)}>0$. Furthermore 
$$
 \text{\rm Ind}_{Z(O_r,\beta)}^{K_{l-1}(O_r)}\psi
 =\bigoplus^{\dim\pi_{\psi}}\pi_{\psi}
$$
and $\pi_{\psi}(x)$ is the homothety $\psi(x)$ for all 
$x\in Z(O_r,\beta)$.
\end{prop}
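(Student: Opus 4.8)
The plan is to recognize Proposition~\ref{prop:existence-of-fundamental-rep-associated-with-psi} as the finite Stone--von Neumann theorem applied to the Heisenberg datum built from $K_{l-1}(O_r)$ and the character $\psi$ of $Z(O_r,\beta)$, and then to verify that this datum meets the hypotheses of \cite[Prop.~3.1.1]{Takase2016}, from which the three assertions follow.

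First I would collect the group-theoretic data already assembled above: $Z(O_r,\beta)$ is a normal subgroup of $K_{l-1}(O_r)$ with $K_l(O_r)$ the kernel of $\heartsuit$; $\heartsuit$ induces a group isomorphism $K_{l-1}(O_r)/Z(O_r,\beta)\,\tilde{\to}\,\Bbb V_{\beta}$; $Z(O_r,\beta)$ is abelian because $G_{\beta}$ is commutative; and the commutator map of $K_{l-1}(O_r)$ descends to the alternating form $D_{\psi}(\dot g,\dot h)=\overline\tau(\langle\dot X,\dot Y\rangle_{\beta})$ on $\Bbb V_{\beta}$. Next I would invoke the identity $\psi(xyx^{-1}y^{-1})=\psi_{\beta}(xyx^{-1}y^{-1})=1$ for $x\in K_{l-1}(O_r)$, $y\in Z(O_r,\beta)$, established just before the statement: it says $[K_{l-1}(O_r),Z(O_r,\beta)]\subset\mathrm{Ker}\,\psi$, so $\mathrm{Ker}\,\psi$ is normal in $K_{l-1}(O_r)$ and $\psi$ is invariant under $K_{l-1}(O_r)$-conjugation. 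Consequently, on any irreducible $\pi$ of $K_{l-1}(O_r)$ with $\langle\psi,\pi\rangle_{Z(O_r,\beta)}>0$ the abelian normal subgroup $Z(O_r,\beta)$ acts by the scalar $\psi$ (this is already the homothety assertion $\pi_{\psi}(x)=\psi(x)$), so $\pi$ factors through $\overline K=K_{l-1}(O_r)/\mathrm{Ker}\,\psi$, a central extension of $\Bbb V_{\beta}$ by the cyclic image of $\psi$ whose commutator pairing is $D_{\psi}$.

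It then remains to check that $\overline K$ is a genuine Heisenberg group, i.e. that $D_{\psi}$ is non-degenerate (as already remarked near the statement). For this I would reduce to non-degeneracy of $\langle\cdot,\cdot\rangle_{\beta}$ on $\Bbb V_{\beta}$: writing $\langle X,Y\rangle_{\beta}=B([X,Y],\overline\beta)=-B(X,\mathrm{ad}(\overline\beta)Y)$ and using that $\mathrm{ad}(\overline\beta)$ is skew-adjoint for the trace form $B$, which is non-degenerate on $\frak{g}(\Bbb F)$ by Condition I) of subsection~\ref{subsec:fundamental-setting}, the radical of $\langle\cdot,\cdot\rangle_{\beta}$ on $\frak{g}(\Bbb F)$ is the $B$-orthogonal complement of $\mathrm{im}\,\mathrm{ad}(\overline\beta)$, which equals $\mathrm{Ker}\,\mathrm{ad}(\overline\beta)=\frak{g}_{\beta}(\Bbb F)$; hence $\langle\cdot,\cdot\rangle_{\beta}$ is non-degenerate on $\Bbb V_{\beta}$ and $D_{\psi}$ is non-degenerate since $\overline\tau$ is a non-trivial character of $\Bbb F$. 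Now \cite[Prop.~3.1.1]{Takase2016} applies to $\overline K$ and produces a unique $\pi_{\psi}$ of dimension $|\Bbb V_{\beta}|^{1/2}$, realized for instance by inducing $\psi$ from the preimage in $K_{l-1}(O_r)$ of a Lagrangian subspace of $\Bbb V_{\beta}$. The identity $\mathrm{Ind}_{Z(O_r,\beta)}^{K_{l-1}(O_r)}\psi=\bigoplus^{\dim\pi_{\psi}}\pi_{\psi}$ would then follow by counting: by Frobenius reciprocity every irreducible constituent $\sigma$ satisfies $\langle\psi,\sigma\rangle_{Z(O_r,\beta)}>0$, hence $\sigma\cong\pi_{\psi}$, so the induced representation is $\pi_{\psi}$-isotypic, and comparing dimensions with $[K_{l-1}(O_r):Z(O_r,\beta)]=|\Bbb V_{\beta}|=(\dim\pi_{\psi})^2$ pins the multiplicity to $\dim\pi_{\psi}$.

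The only substantive ingredient is the finite Stone--von Neumann theorem, which is quoted wholesale from \cite{Takase2016}; within the present bookkeeping the one step that is not purely formal is the identification of the radical of $\langle\cdot,\cdot\rangle_{\beta}$ with $\frak{g}_{\beta}(\Bbb F)$, i.e. the place where Condition I) is actually used.
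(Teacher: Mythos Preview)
Your proposal is correct and follows the same route as the paper: the paper simply records the data $D_\psi$ is non-degenerate and then writes ``Then Proposition~3.1.1 of \cite{Takase2016} gives'' the proposition, with no further argument. You supply more detail than the paper does---in particular the verification that the radical of $\langle\cdot,\cdot\rangle_\beta$ is exactly $\frak{g}_\beta(\Bbb F)$ via skew-adjointness of $\mathrm{ad}(\overline\beta)$ for $B$, and the dimension count for the multiplicity---but the core step is identical.
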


Fix a $\psi=\psi_{\beta,\rho}\in Y_{\beta}$ with 
$\rho\in\frak{g}_{\beta}(\Bbb F)\sphat$. Our problem is to extend the
representation $\pi_{\psi}$ of $K_{l-1}(O_r)$ to a representation of 
$G(O_r,\beta)=G_{\beta}(O_r)\cdot K_{l-1}(O_r)$. 
Now for any $g_r=g\npmod{\frak{p}^r}\in G_{\beta}(O_r)$ and 
$x=(1+\varpi^{l-1}X)\npmod{\frak{p}^r}\in Z(O_r,\beta)$, we have
\begin{align*}
 g_r^{-1}xg_rx^{-1}
 &=\left(1+\varpi^{l-1}g^{-1}Xg\right)
  \left(1-\varpi^{l-1}X+2^{-1}\varpi^{2l-2}X^2\right)\npmod{\frak{p}^r}\\
 &=1+\varpi^{l-1}\left(\text{\rm Ad}(g)^{-1}X-X\right)
    \npmod{\frak{p}^r}
  \in K_l(O_r),
\end{align*}
and
$$
 \psi(g_r^{-1}xg_rx^{-1})
 =\psi_{\beta}(g_r^{-1}xg_rx^{-1})
 =\tau\left(\varpi^{-l}B(X,\text{\rm Ad}(g)\beta-\beta)\right)=1,
$$
that is $\psi(g_r^{-1}xg_r)=\psi(x)$ for all $x\in Z(O_r,\beta)$. This
means that, for any $g\in G_{\beta}(O_r)$, the $g$-conjugate of
$\pi_{\psi}$ is isomorphic to $\pi_{\psi}$, that is, there
exists a $U(g)\in GL_{\Bbb C}(V_{\psi})$ ($V_{\psi}$ is the
representation space of $\pi_{\psi}$) such that
$$
 \pi_{\psi}(g^{-1}xg)
 =U(g)^{-1}\circ\pi_{\psi}(x)\circ U(g)
$$
for all $x\in K_{l-1}(O_r)$, and moreover, for any 
$g,h\in G_{\beta}(O_r)$, there exists a 
$c_U(g,h)\in\Bbb C^{\times}$ such that
$$
 U(g)\circ U(h)=c_U(g,h)\cdot U(gh).
$$
Then $c_U\in Z^2(G_{\beta}(O_r),\Bbb C^{\times})$ is a 
$\Bbb C^{\times}$-valued 2-cocycle on $G_{\beta}(O_r)$ with
trivial action on $\Bbb C^{\times}$, and the cohomology class 
$[c_U]\in H^2(G_{\beta}(O_r),\Bbb C^{\times})$ is independent of
the choice of each $U(g)$. 

We will construct $\pi_{\psi}$ by means of
Schr\"odinger representations over the finite field $\Bbb F$ in
subsection \ref{subsec:schrodinger-representation} 
(see Proposition
\ref{prop:another-expression-of-pi-beta-psi}), and will show 
in subsection \ref{subsec:action-of-g-o-f-beta} 
that we can construct $U(g)$ by means of Weil representation so that we have 
\begin{equation}
 c_U(g,h)=c_{\beta,\rho}(\overline g,\overline h)
\label{eq:c-u-is-c-beta-theta}
\end{equation}
for all $g,h\in G_{\beta}(O_r)$, where 
$\overline g\in G_{\beta}(\Bbb F)$ is the image of 
$g\in G_{\beta}(O_r)$ under the canonical surjection 
$G(O_r)\to G(\Bbb F)$ (see subsection
\ref{subsec:action-of-g-o-f-beta}), and 
$c_{\beta,\rho}$ is the
Schur multiplier defined in section \ref{sec:schur-multiplier}. 
Furthermore, Proposition 
\ref{prop:vanishing-of-c-beta-rho} tells us that the
Schur multiplier 
$[c_{\beta,\rho}]\in H^2(G_{\beta}(\Bbb F),\Bbb C^{\times})$ is
trivial if the characteristic polynomial of 
$\overline\beta\in M_n(\Bbb F)$ is its minimal polynomial. 
So the Schur multiplier 
$[c_U]\in H^2(G_{\beta}(O_r),\Bbb C^{\times})$ is actually trivial
under the second condition of our main theorem 
\ref{th:main-result}. Now we have 

\begin{prop}
\label{prop:existence-of-canonical-intertwiner-of-pi-psi}
Assume that the Schur multiplier 
$[c_U]\in H^2(G_{\beta}(O_r),\Bbb C^{\times})$ is trivial. Then 
there exists a group homomorphism 
$U_{\psi}:G_{\beta}(O_r)\to GL_{\Bbb C}(V_{\psi})$ such that
\begin{enumerate}
\item $\pi_{\psi}(g^{-1}xg)
       =U_{\psi}(g)^{-1}\circ\pi_{\psi}(x)
                              \circ U_{\psi}(g)$ for all 
      $g\in G_{\beta}(O_r)$ and $x\in K_{l-1}(O_r)$ and
\item $U_{\psi}(h)=1$ for all $h\in G_{\beta}(O_r)\cap K_{l-1}(O_r)$.
\end{enumerate}
\end{prop}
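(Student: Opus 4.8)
The plan is to pass from the projective representation $U$ to an honest one using the vanishing hypothesis, and then to normalize it by a character of $G_{\beta}(O_r)$ so that the second condition holds. First, since $[c_U]\in H^2(G_{\beta}(O_r),\Bbb C^{\times})$ is trivial, choose a function $\lambda:G_{\beta}(O_r)\to\Bbb C^{\times}$ with $c_U(g,h)=\lambda(g)\lambda(h)\lambda(gh)^{-1}$ and put $U'(g)=\lambda(g)^{-1}U(g)$. Then $U'(g)U'(h)=\lambda(gh)^{-1}U(gh)=U'(gh)$, so $U'$ is a group homomorphism $G_{\beta}(O_r)\to GL_{\Bbb C}(V_{\psi})$, and since $U'(g)$ and $U(g)$ differ only by a scalar, $U'$ still satisfies $\pi_{\psi}(g^{-1}xg)=U'(g)^{-1}\circ\pi_{\psi}(x)\circ U'(g)$ for all $g\in G_{\beta}(O_r)$ and $x\in K_{l-1}(O_r)$. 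Thus $U'$ already satisfies condition (1), and only condition (2) remains to be arranged.

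Next I would show that $U'$ is scalar-valued on $A=G_{\beta}(O_r)\cap K_{l-1}(O_r)$. Writing $h\in A$ as $h=(1+\varpi^{l-1}X)\npmod{\frak{p}^r}$ with $X\in\frak{g}(O)$, the relation $h\beta\equiv\beta h\npmod{\frak{p}^r}$ gives $\varpi^{l-1}[X,\beta]\equiv 0\npmod{\frak{p}^r}$, hence $[X,\beta]\equiv 0\npmod{\frak{p}^l}$ and in particular $[\overline X,\overline\beta]=0$ in $\frak{g}(\Bbb F)$; thus $\heartsuit(h)\in\frak{g}_{\beta}(\Bbb F)$, i.e. $A\subset Z(O_r,\beta)$. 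Now for $h\in A$ both $U'(h)$ and $\pi_{\psi}(h)$ implement the automorphism $\pi_{\psi}(x)\mapsto\pi_{\psi}(h^{-1}xh)$ of $\pi_{\psi}(K_{l-1}(O_r))$ — the former by condition (1), the latter because $\pi_{\psi}$ is a representation of the group $K_{l-1}(O_r)\supset A$ — so $U'(h)\pi_{\psi}(h)^{-1}$ commutes with $\pi_{\psi}(x)$ for every $x\in K_{l-1}(O_r)$. By Schur's lemma and the irreducibility of $\pi_{\psi}$ this operator is a scalar, and since $\pi_{\psi}(h)$ is the homothety $\psi(h)$ by Proposition \ref{prop:existence-of-fundamental-rep-associated-with-psi} (as $h\in Z(O_r,\beta)$), we conclude $U'(h)=\nu(h)\cdot\mathrm{id}$ for some $\nu(h)\in\Bbb C^{\times}$; since $U'$ is a homomorphism, $\nu\in A\sphat$.

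Finally, recall that throughout this section $G_{\beta}$ is commutative, so $G_{\beta}(O_r)$ is a finite abelian group, and the character $\nu$ of the subgroup $A$ therefore extends to a character $\widetilde\nu\in G_{\beta}(O_r)\sphat$. Setting $U_{\psi}(g)=\widetilde\nu(g)^{-1}U'(g)$ gives again a group homomorphism satisfying condition (1) (because $\widetilde\nu(g)$ is a scalar), and for $h\in A$ one has $U_{\psi}(h)=\widetilde\nu(h)^{-1}U'(h)=\nu(h)^{-1}\nu(h)\cdot\mathrm{id}=1$, which is condition (2).

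I expect the only substantive step to be the middle one: locating $A$ inside $Z(O_r,\beta)$ so that the homothety property of $\pi_{\psi}$ from Proposition \ref{prop:existence-of-fundamental-rep-associated-with-psi} applies, and combining it with Schur's lemma to force $U'|_A$ to be scalar. The passage from $U$ to $U'$ via a coboundary and the final twist by an extended character are routine.
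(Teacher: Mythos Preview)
Your proof is correct and follows essentially the same route as the paper's: trivialize the cocycle to get a genuine homomorphism, show $G_{\beta}(O_r)\cap K_{l-1}(O_r)\subset Z(O_r,\beta)$ so that $\pi_{\psi}$ acts there by the scalar $\psi$, use Schur's lemma to see the homomorphism is scalar on that intersection, and then twist by an extension of the resulting character (using that $G_{\beta}(O_r)$ is finite abelian). The only cosmetic difference is that the paper packages the scalar as $U(h)=c(h)\cdot\pi_{\psi}(h)$ and extends $h\mapsto c(h)\psi(h)$, whereas you write $U'(h)=\nu(h)\cdot\mathrm{id}$ directly; these are the same thing once one notes $\pi_{\psi}(h)=\psi(h)\cdot\mathrm{id}$.
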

\begin{proof}
Since the Schur multiplier 
$[c_U]\in H^2(G_{\beta}(O_r),\Bbb C^{\times})$ is trivial, there exists a
group homomorphism $U:G_{\beta}(O_r)\to GL_{\Bbb C}(V_{\psi})$ such that 
$\pi_{\psi}(g^{-1}xg)=U(g)^{-1}\circ\pi_{\psi}(x)\circ U(g)$ for all 
$g\in G_{\beta}(O_r)$ and $x\in K_{l-1}(O_r)$. Then for any 
$h\in G_{\beta}(O_r)\cap K_{l-1}(O_r)$ there exists a 
$c(h)\in\Bbb C^{\times}$ such that $U(h)=c(h)\cdot\pi_{\psi}(h)$. 
On the other hand we have 
$$
 G_{\beta}(O_r)\cap K_{l-1}(O_r)\subset Z(O_r,\beta)
$$
since 
$(1+\varpi^{l-1}X)_r\in G_{\beta}(O_r)\cap K_{l-1}(O_r)$
means that
\begin{align*}
 \beta
 &\equiv(1+\varpi^{l-1}X)\beta(1+\varpi^{l-1}X)^{-1}\npmod{\frak{p}^l}\\
 &\equiv(\beta+\varpi^{l-1}X\beta)(1-\varpi^{l-1}X)\npmod{\frak{p}^l}\\
 &\equiv\beta+\varpi^{l-1}[X,\beta]\npmod{\frak{p}^l}
\end{align*}
and then $[X,\beta]\equiv 0\npmod{\frak p}$, that is 
$X\npmod{\frak p}\in\frak{g}_{\beta}(\Bbb F)$. Then 
$\pi_{\psi}(h)$ is the homothety $\psi(h)$ for all 
$h\in G_{\beta}(O_r)\cap K_{l-1}(O_r)$. Extend the group homomorphism 
$h\mapsto c(h)\psi(h)$ of $G_{\beta}(O_r)\cap K_{l-1}(O_r)$ to a group
homomorphism $\theta:G_{\beta}(O_r)\to\Bbb C^{\times}$. Then 
$g\mapsto U_{\psi}(g)=\theta(g)^{-1}U(g)$ is the required group homomorphism.
\end{proof}

Let us denote by 
$G_{\beta}(O_r)\sphat{\times}_{K_{l-1}(O_r)}\frak{g}_{\beta}(\Bbb F)\sphat$
the set of 
$(\theta,\rho)
 \in G_{\beta}(O_r)\sphat\times\frak{g}_{\beta}(\Bbb F)\sphat$ such that 
$\theta=\psi_{\beta,\rho}$ on $G_{\beta}(O_r)\cap K_{l-1}(O_r)$. Then,
 under the assumption of Proposition
 \ref{prop:existence-of-canonical-intertwiner-of-pi-psi}, 
$(\theta,\rho)\in
 G_{\beta}(O_r)\sphat{\times}_{K_{l-1}(O_r)}\frak{g}_{\beta}(\Bbb F)\sphat$ 
defines an irreducible representation $\sigma_{\theta,\rho}$ of 
$G(O_r,\beta)=G_{\beta}(O_r)\cdot K_{l-1}(O_r)$ by
\begin{equation}
 \sigma_{\theta,\rho}(gh)
 =\theta(g)\cdot U_{\psi}(g)\circ\pi_{\psi}(h)
\label{eq:description-of-sigma-beta-theta-for-odd-r}
\end{equation}
for $g\in G_{\beta}(O_r)$ and $h\in K_{l-1}(O_r)$ 
with $\psi=\psi_{\beta,\rho}$. Then we have

\begin{prop}\label{prop:generalized-main-result-for-odd-r}
Assume that the Schur multiplier 
$[c_U]\in H^2(G_{\beta}(O_r),\Bbb C^{\times})$ is trivial. Then 
a bijection of 
$G_{\beta}(O_r)\sphat{\times}_{K_{l-1}(O_r)}\frak{g}_{\beta}(\Bbb F)\sphat$
onto $\text{\rm Irr}(G(O_r,\beta)\mid\psi_{\beta})$ is given by 
$(\theta,\rho)\mapsto\sigma_{\theta,\rho}$.
\end{prop}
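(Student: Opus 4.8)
The plan is to verify, in turn, that $(\theta,\rho)\mapsto\sigma_{\theta,\rho}$ is well-defined, that its values lie in $\text{\rm Irr}(G(O_r,\beta)\mid\psi_\beta)$, that it is injective, and that it is surjective. Throughout write $\psi=\psi_{\beta,\rho}$ and $D=G_\beta(O_r)\cap K_{l-1}(O_r)$, and recall from the discussion preceding Proposition \ref{prop:existence-of-canonical-intertwiner-of-pi-psi} that $D\subset Z(O_r,\beta)$, that $\pi_\psi(d)$ is the scalar $\psi(d)$ for $d\in D$, and that $U_\psi(d)=1$ for $d\in D$ by the normalization in Proposition \ref{prop:existence-of-canonical-intertwiner-of-pi-psi}(2). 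For well-definedness, if $gh=g'h'$ with $g,g'\in G_\beta(O_r)$ and $h,h'\in K_{l-1}(O_r)$, then $d:=g'^{-1}g=h'h^{-1}\in D$; writing $g'=gd^{-1}$, $h'=dh$ and using $U_\psi(d)=1$, $\pi_\psi(d)=\psi(d)$ together with the compatibility $\theta=\psi$ on $D$ built into the definition of $G_{\beta}(O_r)\sphat{\times}_{K_{l-1}(O_r)}\frak{g}_{\beta}(\Bbb F)\sphat$, one gets $\theta(g')U_\psi(g')\pi_\psi(h')=\theta(g)U_\psi(g)\pi_\psi(h)$, so \eqref{eq:description-of-sigma-beta-theta-for-odd-r} is unambiguous. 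That $\sigma_{\theta,\rho}$ is a homomorphism follows from a direct computation using the intertwining relation of Proposition \ref{prop:existence-of-canonical-intertwiner-of-pi-psi}(1) (equivalently $\pi_\psi(h)U_\psi(g')=U_\psi(g')\pi_\psi(g'^{-1}hg')$) and the fact that $U_\psi$ is a group homomorphism.

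Since $\sigma_{\theta,\rho}$ restricts on $K_{l-1}(O_r)$ to $\pi_\psi$, which is irreducible by Proposition \ref{prop:existence-of-fundamental-rep-associated-with-psi}, the representation $\sigma_{\theta,\rho}$ is irreducible; and because $\pi_\psi(x)=\psi(x)=\psi_\beta(x)$ for $x\in K_l(O_r)\subset Z(O_r,\beta)$ we get $\langle\psi_\beta,\sigma_{\theta,\rho}\rangle_{K_l(O_r)}=\dim\pi_\psi>0$, so $\sigma_{\theta,\rho}\in\text{\rm Irr}(G(O_r,\beta)\mid\psi_\beta)$. For injectivity, suppose $\sigma_{\theta,\rho}\cong\sigma_{\theta',\rho'}$; restricting to $K_{l-1}(O_r)$ gives $\pi_{\psi_{\beta,\rho}}\cong\pi_{\psi_{\beta,\rho'}}$, and since $\pi_\psi|_{Z(O_r,\beta)}$ is the scalar $\psi$ this forces $\psi_{\beta,\rho}=\psi_{\beta,\rho'}$, hence $\rho=\rho'$ because $\rho\mapsto\psi_{\beta,\rho}$ is a bijection. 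Then any intertwiner between $\sigma_{\theta,\rho}$ and $\sigma_{\theta',\rho}$ commutes with $\pi_\psi(K_{l-1}(O_r))$, hence is a scalar by Schur's lemma and the irreducibility of $\pi_\psi$; comparing the actions of $g\in G_\beta(O_r)$ then yields $\theta=\theta'$.

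The substantial point is surjectivity. Let $\sigma\in\text{\rm Irr}(G(O_r,\beta)\mid\psi_\beta)$. Because $K_l(O_r)\triangleleft G(O_r,\beta)$ and $\psi_\beta$ is invariant under $G(O_r,\beta)$ (its elements satisfy $\text{\rm Ad}(g)\beta\equiv\beta\npmod{\frak{p}^{l'}}$, so $\psi_{\text{\rm Ad}(g)\beta}=\psi_\beta$ by \eqref{eq:conjugate-of-psi-beta}), Clifford theory shows $\sigma|_{K_l(O_r)}$ is $\psi_\beta$-isotypic. Restricting further to $K_{l-1}(O_r)$, each irreducible constituent lies over $\psi_\beta$ and hence equals $\pi_{\psi'}$ for some $\psi'\in Y_\beta$ by the uniqueness assertion of Proposition \ref{prop:existence-of-fundamental-rep-associated-with-psi}; by Clifford theory these constituents form a single $G(O_r,\beta)$-orbit, while $K_{l-1}(O_r)$ acts trivially on the isomorphism classes of its own irreducibles and each $g\in G_\beta(O_r)$ fixes every $[\pi_{\psi'}]$ (the argument preceding Proposition \ref{prop:existence-of-canonical-intertwiner-of-pi-psi} applies to every $\psi'\in Y_\beta$ and shows $\psi'(g^{-1}xg)=\psi'(x)$ on $Z(O_r,\beta)$, so the $g$-conjugate of $\pi_{\psi'}$ again lies over $\psi'$). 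Hence $\sigma|_{K_{l-1}(O_r)}\cong m\,\pi_\psi$ for one $\psi=\psi_{\beta,\rho}$. Identify the space of $\sigma$ with $V_\psi\otimes\Bbb C^m$ so that $\sigma(h)=\pi_\psi(h)\otimes 1$ for $h\in K_{l-1}(O_r)$, and for $g\in G_\beta(O_r)$ set $W(g)=\sigma(g)\circ(U_\psi(g)^{-1}\otimes 1)$. Using $\sigma(g)\sigma(g^{-1}hg)=\sigma(hg)$ for $h\in K_{l-1}(O_r)$ together with Proposition \ref{prop:existence-of-canonical-intertwiner-of-pi-psi}(1), one checks that $W(g)$ commutes with $\pi_\psi(h)\otimes 1$ for all $h$, so by irreducibility of $\pi_\psi$ we get $W(g)=1\otimes\mu(g)$ with $\mu(g)\in GL_m(\Bbb C)$, i.e.\ $\sigma(g)=U_\psi(g)\otimes\mu(g)$. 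Since $U_\psi$ is a homomorphism, so is $\mu$; as $G_\beta(O_r)$ is abelian, $\mu$ splits into characters $\theta_1,\dots,\theta_m$. Comparing $\sigma(d)=\pi_\psi(d)\otimes 1=\psi(d)(1\otimes 1)$ with $\sigma(d)=U_\psi(d)\otimes\mu(d)=1\otimes\mu(d)$ for $d\in D$ shows $\theta_j=\psi$ on $D$, so each $(\theta_j,\rho)$ lies in the index set and the subspace $V_\psi\otimes\Bbb C e_j$ carries exactly $\sigma_{\theta_j,\rho}$; irreducibility of $\sigma$ then forces $m=1$ and $\sigma\cong\sigma_{\theta_1,\rho}$.

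The main obstacle is this surjectivity step, and inside it the Clifford-theoretic reduction to a single $\pi_\psi$ followed by the verification that the operators $\mu(g)$ assemble into a genuine linear (not merely projective) representation of the abelian group $G_\beta(O_r)$; once both are secured, reading off $\sigma\cong\sigma_{\theta_1,\rho}$ is immediate. The triviality hypothesis on the Schur multiplier $[c_U]$ enters only through Proposition \ref{prop:existence-of-canonical-intertwiner-of-pi-psi}, which furnishes the homomorphism $U_\psi$ underpinning the whole construction; everything else is bookkeeping with the normalizations recorded in Propositions \ref{prop:existence-of-fundamental-rep-associated-with-psi} and \ref{prop:existence-of-canonical-intertwiner-of-pi-psi}.
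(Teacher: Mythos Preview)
Your proof is correct and in fact more thorough than the paper's. For surjectivity the paper takes the dual (Frobenius-reciprocity) route: it embeds an arbitrary $\sigma\in\text{\rm Irr}(G(O_r,\beta)\mid\psi_\beta)$ into $\text{\rm Ind}_{K_l(O_r)}^{G(O_r,\beta)}\psi_\beta$, then uses induction in stages through $Z(O_r,\beta)$ and $K_{l-1}(O_r)$ together with Proposition~\ref{prop:existence-of-fundamental-rep-associated-with-psi} to land in $\bigoplus^{\dim\pi_\psi}\text{\rm Ind}_{K_{l-1}(O_r)}^{G(O_r,\beta)}\pi_\psi$ for some $\psi=\psi_{\beta,\rho}$, and finally invokes the Gallagher-type decomposition $\text{\rm Ind}_{K_{l-1}(O_r)}^{G(O_r,\beta)}\pi_\psi=\bigoplus_\theta\sigma_{\theta,\rho}$ (valid because $\pi_\psi$ extends). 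The paper leaves well-definedness of \eqref{eq:description-of-sigma-beta-theta-for-odd-r} and injectivity of $(\theta,\rho)\mapsto\sigma_{\theta,\rho}$ as understood. Your argument is the restriction-side mirror: you use Clifford theory to see that $\sigma|_{K_{l-1}(O_r)}$ is $\pi_\psi$-isotypic for a single $\psi$, then split off the abelian multiplicity factor $\mu$ by hand. The two approaches are equivalent in content; yours has the advantage of making the homomorphism property, injectivity, and the compatibility on $G_\beta(O_r)\cap K_{l-1}(O_r)$ fully explicit, while the paper's induction chain reaches surjectivity in fewer lines.
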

\begin{proof}
Clearly 
$\sigma_{\theta,\rho}\in\text{\rm Irr}(G(O_r,\beta)\mid\psi_{\beta})$ for
all 
$(\theta,\rho)\in
 G_{\beta}(O_r)\sphat{\times}_{K_{l-1}(O_r)}\frak{g}_{\beta}(\Bbb F)\sphat$. 
Take a $\sigma\in\text{\rm Irr}(G(O_r,\beta)\mid\psi_{\beta})$. Then 
\begin{align*}
 \sigma\hookrightarrow
 \text{\rm Ind}_{K_l(O_r)}^{G(O_r,\beta)}\psi_{\beta}
 &=\text{\rm Ind}_{Z(O_r,\beta)}^{G(O_r,\beta)}
   \left(\text{\rm Ind}_{K_l(O_r)}^{Z(O_r,\beta)}\psi_{\beta}
         \right)\\
 &=\bigoplus_{\psi\in Y_{\beta}}
   \text{\rm Ind}_{Z(O_r,\beta)}^{G(O_r,\beta)}\psi
\end{align*}
so that there exists a $\psi=\psi_{\beta,\rho}\in Y_{\beta}$ with 
$\rho\in\frak{g}_{\beta}(\Bbb F)\sphat$ such that 
\begin{align*}
 \sigma\hookrightarrow
    \text{\rm Ind}_{Z(O_r,\beta)}^{G(O_r,\beta)}\psi
 &=\text{\rm Ind}_{K_{l-1}(O_r)}^{G(O_r,\beta)}
   \left(\text{\rm Ind}_{Z(O_r,\beta)}^{K_{l-1}(O_r)}\psi
         \right)\\
 &=\bigoplus^{\dim\pi_{\psi}}
   \text{\rm Ind}_{K_{l-1}(O_r)}^{G(O_r,\beta)}\pi_{\psi}
  =\bigoplus^{\dim\pi_{\psi}}\bigoplus_{\theta}\sigma_{\theta,\psi},
\end{align*}
where $\bigoplus_{\theta}$ is the direct sum over 
$\theta\in G_{\beta}(O_r)\sphat$ such that $\theta=\psi_{\beta,\rho}$ on 
$G_{\beta}(O_r)\cap K_{l-1}(O_r)$. 
Then we have $\sigma=\sigma_{\theta,\rho}$ for some 
$(\theta,\rho)\in
 G_{\beta}(O_r)\sphat{\times}_{K_{l-1}(O_r)}\frak{g}_{\beta}(\Bbb F)\sphat$.
\end{proof}

We have also

\begin{prop}\label{prop:fundamental-bijection-of-parameter}
$(\theta,\rho)\mapsto\theta$ gives a bijection of 
$G_{\beta}(O_r)\sphat{\times}_{K_{l-1}(O_r)}\frak{g}_{\beta}(\Bbb F)\sphat$ 
onto the set
$$
  \left\{\theta\in G_{\beta}(O_r)\sphat\;\;\;
         \text{\rm s.t. $\theta=\psi_{\beta}$ on 
               $G_{\beta}(O_r)\cap K_l(O_r)$}\right\}.
$$
\end{prop}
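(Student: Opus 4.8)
The plan is to establish the bijection by showing that for each character $\theta$ in the target set there is exactly one $\rho\in\frak{g}_{\beta}(\Bbb F)\sphat$ for which $(\theta,\rho)$ lies in $G_{\beta}(O_r)\sphat{\times}_{K_{l-1}(O_r)}\frak{g}_{\beta}(\Bbb F)\sphat$. First I would dispose of well-definedness: if $\theta=\psi_{\beta,\rho}$ on $G_{\beta}(O_r)\cap K_{l-1}(O_r)$, then on the subgroup $G_{\beta}(O_r)\cap K_l(O_r)\subset K_l(O_r)$ we get $\theta=\psi_{\beta,\rho}=\psi_{\beta}$ since $\psi_{\beta,\rho}\in Y_{\beta}$ agrees with $\psi_{\beta}$ on $K_l(O_r)$; so $(\theta,\rho)\mapsto\theta$ does map into the indicated set. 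Everything else rests on the following claim: the restriction of $\heartsuit$ to $G_{\beta}(O_r)\cap K_{l-1}(O_r)$ is a surjective homomorphism onto $\frak{g}_{\beta}(\Bbb F)$ whose kernel is $G_{\beta}(O_r)\cap K_l(O_r)$.

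The image of this restriction is contained in $\frak{g}_{\beta}(\Bbb F)$ because $G_{\beta}(O_r)\cap K_{l-1}(O_r)\subset Z(O_r,\beta)$, which was verified in the proof of Proposition \ref{prop:existence-of-canonical-intertwiner-of-pi-psi}; and the kernel is $K_l(O_r)\cap\bigl(G_{\beta}(O_r)\cap K_{l-1}(O_r)\bigr)=G_{\beta}(O_r)\cap K_l(O_r)$, since $\ker\heartsuit=K_l(O_r)$ and $K_l(O_r)\subset K_{l-1}(O_r)$. The substantive point — and the one place where the standing hypothesis that $G_{\beta}$ is smooth over $O$ is genuinely used — is surjectivity. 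Here I would argue that, $G_{\beta}$ being a closed subscheme of $G$ (hence injective on points), $G_{\beta}(O_r)\cap K_{l-1}(O_r)$ is exactly $\ker(G_{\beta}(O_r)\to G_{\beta}(O_{l-1}))$; that the reduction $G_{\beta}(O_r)\to G_{\beta}(O_l)$ is surjective by Proposition \ref{prop:geometric-hensel-lemma}, so this kernel maps onto $\ker(G_{\beta}(O_l)\to G_{\beta}(O_{l-1}))$; and that, by smoothness of $G_{\beta}$, this last kernel is canonically $\frak{g}_{\beta}(\Bbb F)$ via $1+\varpi^{l-1}X\npmod{\frak{p}^l}\mapsto\overline X$ — precisely the recipe defining $\heartsuit$. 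I expect this to be the only step requiring care; the rest is formal.

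Granting the claim, injectivity of $(\theta,\rho)\mapsto\theta$ is immediate: if $(\theta,\rho)$ and $(\theta,\rho^{\prime})$ both occur, then $\psi_{\beta,\rho}/\psi_{\beta,\rho^{\prime}}=(\rho/\rho^{\prime})\circ\heartsuit$ is trivial on $G_{\beta}(O_r)\cap K_{l-1}(O_r)$; since $\heartsuit$ restricted to that subgroup surjects onto $\frak{g}_{\beta}(\Bbb F)$, we get $\rho/\rho^{\prime}=1$, hence $\rho=\rho^{\prime}$, recalling that $\rho\mapsto\psi_{\beta,\rho}$ is a bijection onto $Y_{\beta}$.

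For surjectivity, fix an admissible $\theta$ and consider $\chi=\theta\cdot\widetilde{\psi}_{\beta}^{-1}$ as a character of $G_{\beta}(O_r)\cap K_{l-1}(O_r)$ — legitimate since this group lies in $Z(O_r,\beta)$, the domain of $\widetilde{\psi}_{\beta}$. On $G_{\beta}(O_r)\cap K_l(O_r)$ one has $\theta=\psi_{\beta}$ by admissibility and $\widetilde{\psi}_{\beta}=\psi_{\beta}$ — the latter because $\psi_{\beta,\rho}=\widetilde{\psi}_{\beta}\cdot(\rho\circ\heartsuit)$ lies in $Y_{\beta}$ and $\rho\circ\heartsuit$ is trivial on $\ker\heartsuit=K_l(O_r)$ (or directly, since on $1+\varpi^lY$ the two differ by $\tau(2^{-1}\varpi B(Y^2,\beta))=1$, using that $F$ is non-dyadic so $2^{-1}\varpi B(Y^2,\beta)\in O$). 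Hence $\chi$ is trivial on $\ker(\heartsuit|_{G_{\beta}(O_r)\cap K_{l-1}(O_r)})=G_{\beta}(O_r)\cap K_l(O_r)$, so by the surjectivity in the claim it descends to a character $\rho\in\frak{g}_{\beta}(\Bbb F)\sphat$ with $\chi=\rho\circ\heartsuit$. Then $\theta=\widetilde{\psi}_{\beta}\cdot(\rho\circ\heartsuit)=\psi_{\beta,\rho}$ on $G_{\beta}(O_r)\cap K_{l-1}(O_r)$, so $(\theta,\rho)$ lies in the fibered set and maps to $\theta$, which completes the plan.
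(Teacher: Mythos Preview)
Your proof is correct, and it takes a genuinely different route from the paper's. The paper argues by explicit computation with the truncated exponential of condition~III): for $X\in\frak{g}_{\beta}(O)$ (lifting a given $\overline X\in\frak{g}_{\beta}(\Bbb F)$ via smoothness of $G_{\beta}$), the element $g=1+\varpi^{l-1}X+2^{-1}\varpi^{2l-2}X^2\npmod{\frak{p}^r}$ lies in $G_{\beta}(O_r)\cap K_{l-1}(O_r)$, and evaluating $\theta(g)=\psi_{\beta,\rho}(g)$ directly yields the closed formula
\[
 \rho(\overline X)=\tau\!\left(-\varpi^{-l}B(X,\beta)\right)\cdot
 \theta\!\left(1+\varpi^{l-1}X+2^{-1}\varpi^{2l-2}X^2\npmod{\frak{p}^r}\right),
\]
from which injectivity is immediate; surjectivity is then obtained by checking that this formula defines a well-posed $\rho$ for any admissible $\theta$. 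Your argument is instead structural: you isolate the short exact sequence $1\to G_{\beta}(O_r)\cap K_l(O_r)\to G_{\beta}(O_r)\cap K_{l-1}(O_r)\xrightarrow{\heartsuit}\frak{g}_{\beta}(\Bbb F)\to 1$ and read off both injectivity and surjectivity from the factorisation $\psi_{\beta,\rho}=\widetilde\psi_{\beta}\cdot(\rho\circ\heartsuit)$. What you gain is that condition~III) is never invoked---only condition~II) (to identify $K_{l-1}(O_l)$ with $\frak{g}(\Bbb F)$) and smoothness of $G_{\beta}$ (for the Hensel lift $G_{\beta}(O_r)\twoheadrightarrow G_{\beta}(O_l)$); what the paper's approach buys is the explicit recovery formula for $\rho$ in terms of $\theta$, which your proof establishes only abstractly.
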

\begin{proof}
Take a 
$(\theta,\rho)\in 
 G_{\beta}(O_r)\sphat{\times}_{K_{l-1}(O_r)}\frak{g}_{\beta}(\Bbb F)\sphat$.
The smoothness of $G_{\beta}$ over $O$ implies that the canonical
mapping $\frak{g}_{\beta}(O)\to\frak{g}_{\beta}(\Bbb F)$ is
surjective. So Take a $\overline X\in\frak{g}_{\beta}(\Bbb F)$ with 
$X\in\frak{g}_{\beta}(O)$. Then we have
$$
 g=1+\varpi^{l-1}X+2^{-1}\varpi^{2l-2}X^2\npmod{\frak{p}^r}
 \in K_{l-1}(O_r)\cap G_{\beta}(O_r)
$$
so that
\begin{align*}
 \theta(g)=\psi_{\beta,\rho}(g)
 &=\tau\left(\varpi^{-l}B(X+2^{-1}\varpi^{l-1}X^2,\beta)
             -2^{-1}\varpi^{-1}B(X,\beta)\right)\cdot
   \rho(\overline X)\\
 &=\tau\left(\varpi^{-l}B(X,\beta)\right)\cdot\rho(\overline X).
\end{align*}
Hence we have 
$$
 \rho(\overline X)
 =\tau\left(-\varpi^{-l}B(X,\beta)\right)\cdot
  \theta\left(1+\varpi^{l-1}X+2^{-1}\varpi^{2l-2}X^2\npmod{\frak{p}^r}
              \right).
$$
This means that the mapping $(\theta,\rho)\mapsto\theta$ is
injective. Take $X, X^{\prime}\in\frak{g}_{\beta}(O)$ such that 
$X\equiv X^{\prime}\pmod{\frak p}$. Then we have 
$X^{\prime}=X+\varpi T$ with $T\in\frak{g}_{\beta}(O)$ and
\begin{align*}
 &1+\varpi^{l-1}X^{\prime}
                   +2^{-1}\varpi^{2l-2}X^{\prime 2}\pmod{\frak{p}^r}\\
 =&1+\varpi^{-1}X+2^{-1}\varpi^{2l-2}X^2+\varpi^lT\pmod{\frak{p}^r}\\
 =&(1+\varpi^{l-1}X+2^{-1}\varpi^{2l-2}X^2)(1+\varpi^lT)
         \pmod{\frak{p}^r},
\end{align*}
where $1+\varpi^lT\pmod{\frak{p}^r}\in K_l(O_r)$ and hence
$$
 \theta(1+\varpi^lT\npmod{\frak{p}^r})
 =\psi_{\beta}(1+\varpi^lT\npmod{\frak{p}^r})
 =\tau\left(\varpi^{-(l-1)}B(T,\beta)\right).
$$
This and the commutativity of $G_{\beta}$ show that
$$
 \rho(\overline X)
 =\tau\left(-\varpi^{-l}B(X,\beta)\right)\cdot
  \theta\left(1+\varpi^{l-1}X+2^{-1}\varpi^{2l-2}X^2\npmod{\frak{p}^r}
              \right)
$$
with $\overline X\in\frak{g}_{\beta}(\Bbb F)$ with
$X\in\frak{g}_{\beta}(O)$ gives an well-defined group homomorphism of 
$\frak{g}_{\beta}(\Bbb F)$ to $\Bbb C^{\times}$. Then 
$(\theta,\rho)\in 
 G_{\beta}(O_r)\sphat{\times}_{K_{l-1}(O_r)}\frak{g}_{\beta}(\Bbb F)\sphat$ 
and our mapping in question is surjective.
\end{proof}
         
Proposition \ref{prop:generalized-main-result-for-odd-r} and 
Proposition \ref{prop:fundamental-bijection-of-parameter} give the
bijection presented in our main Theorem 
\ref{th:main-result} in the case of $r$ being odd. So all that we need
is to show the triviality of the Schur multiplier 
$[c_U]\in H^2(G_{\beta}(O_r),\Bbb C^{\times})$ in 
Proposition \ref{prop:generalized-main-result-for-odd-r}. The
triviality is proved in subsection
\ref{subsec:vanishing-of-c-beta-rho} after 
\begin{enumerate}
\item analyzing, in subsections
      \ref{subsec:k-l-1-as-group-extension} and  
      \ref{subsec:reduction-to-intermediate-group}, 
      the structure of $K_{l-1}(O_r)$ and 
\item describing, in subsection
      \ref{subsec:schrodinger-representation} and 
      \ref{subsec:action-of-g-o-f-beta},  the Schur multiplier 
$$
 [c_U]\in H^2(G_{\beta}(O_r),\Bbb C^{\times})
$$
      in terms of 
      the Schur multiplier defined in section 
      \ref{sec:schur-multiplier}.
\end{enumerate}
% if the Schur 
%multiplier $[c_U]\in H^2(G_{\beta}(O_r),\Bbb C^{\times})$ is actually
%trivial. 
%The triviality of the Schur multiplier is shown by 
%Proposition \ref{prop:vanishing-of-c-beta-rho} after establishing a
%connection between the Schur multiplier 
%$[c_U]\in H^2(G_{\beta}(O_r),\Bbb C^{\times})$ and the Schur
%multiplier defined in the preceding section 
%\ref{sec:schur-multiplier}. 

\subsection[]{Structure of $K_{l-1}(O_r)$}
\label{subsec:k-l-1-as-group-extension}
A group extension
\begin{equation}
 0\to\frak{g}(O_{l-1})\xrightarrow{\diamondsuit}
     K_{l-1}(O_r)\xrightarrow{\heartsuit}
     \frak{g}(\Bbb F)\to 0
\label{eq:k-l-1-as-group-extension}
\end{equation}
is given by the canonical surjection 
\eqref{eq:canonical-surjection-of-k-l-1-to-g(f)}, whose kernel is
$K_l(O_r)$, with the group isomorphism 
$$
 \diamondsuit:\frak{g}(O_{l-1})\,\tilde{\to}\,K_l(O_r)
$$
defined by 
$S\npmod{\frak{p}^{l-1}}\mapsto(1+\varpi^lS)\npmod{\frak{p}^r}$ which
is assumed as the condition II) in subsection
\ref{subsec:fundamental-setting}. 

In order to determine the 2-cocycle of the group extension 
\eqref{eq:k-l-1-as-group-extension}, choose any mapping 
$\lambda:\frak{g}(\Bbb F)\to\frak{g}(O)$ such that 
$X=\lambda(X)\npmod{\frak p}$ for all $X\in\frak{g}(\Bbb F)$ and 
$\lambda(0)=0$, and define a section
$$
 l:\frak{g}(\Bbb F)\to K_{l-1}(O_r)
$$
of \eqref{eq:k-l-1-as-group-extension} by 
$X\mapsto 1+\varpi^{l-1}\lambda(X)
           +2^{-1}\varpi^{2l-2}\lambda(X)^2\npmod{\frak{p}^r}$. 
Then we have
$$
 l(X)^{-1}
 =1-\varpi^{l-1}\lambda(X)+2^{-1}\varpi^{2l-2}\lambda(X)^2
  \npmod{\frak{p}^r}
$$
for all $X\in\frak{g}(\Bbb F)$ and
$$
 l(X)(1+\varpi^lS)l(X)^{-1}
 \equiv 1+\varpi^lS\npmod{\frak{p}^r}
$$
for all $S_{l-1}\in\frak{g}(O_{l-1})$. Furthermore we have
$$
 l(X)l(Y)l(X+Y)^{-1}
 =1+\varpi^l\left\{\mu(X,Y)+2^{-1}\varpi^{l-2}[\lambda(X),\lambda(Y)]
                  \right\}\npmod{\frak{p}^r}
$$
for all $X,Y\in\frak{g}(\Bbb F)$ where 
$\mu:\frak{g}(\Bbb F)\times\frak{g}(\Bbb F)\to\frak{g}(O)$ 
is defined by 
$$
 \lambda(X)+\lambda(Y)-\lambda(X+Y)
 =\varpi\cdot\mu(X,Y)
$$
for all $X,Y\in\frak{g}(\Bbb F)$. Now we have two elements 
($2$-cocycle) 
$$
 \mu=[(X,Y)\mapsto\mu(X,Y)_{l-1}],
 \quad
 c=[(\overline X,\overline Y)\mapsto 2^{-1}\varpi^{l-2}[X,Y]_{l-1}]
$$
of $Z^2(\frak{g}(\Bbb F),\frak{g}(O_{l-1}))$ 
with trivial action of $\frak{g}(\Bbb F)$ on $\frak{g}(O_{l-1})$. 

Let us consider two groups $\Bbb M$ and $\Bbb G$ corresponding to the two
2-cocycles $\mu$ and $c$ respectively. That is the group operation on 
$\Bbb M=\frak{g}(\Bbb F)\times\frak{g}(O_{l-1})$ is defined by
$$
 (X,S_{l-1})\cdot(Y,T_{l-1})=(X+Y,(S+T+\mu(X,Y))_{l-1})
$$
and the group operation on 
$\Bbb G=\frak{g}(\Bbb F)\times\frak{g}(O_{l-1})$ is defined by 
$$
 (\overline X,S_{l-1})\cdot(\overline Y,T_{l-1})
 =(\overline{X+Y},(S+T+2^{-1}\varpi^{l-2}[X,Y])_{l-1}).
$$
Let $\Bbb G{\times}_{\frak{g}(\Bbb F)}\Bbb M$ be the fiber product of 
$\Bbb G$ and $\Bbb M$ with respect to the canonical projections 
onto $\frak{g}(\Bbb F)$. In other words
$$
 \Bbb G{\times}_{\frak{g}(\Bbb F)}\Bbb M
 =\left\{(X;S,T)=((X,S),(X,T))\in\Bbb G\times\Bbb M\right\}
$$
is a subgroup of the direct product $\Bbb G\times\Bbb M$. 
We have a surjective group homomorphism 
\begin{equation}
 (\ast):\Bbb G{\times}_{\frak{g}(\Bbb F)}\Bbb M\to K_{l-1}(O_r)
\label{eq:fundamental-surjection-for-general-alg-group}
\end{equation}
defined by 
\begin{align*}
 (X;S_{l-1},T_{l-1})\mapsto&
                      l(X)\cdot(1+\varpi^l(S+T)\npmod{\frak{p}^r})\\
         &=1+\varpi^{l-1}\lambda(X)
            +2^{-1}\varpi^{2l-2}\lambda(X)^2
            +\varpi^l(S+T)\npmod{\frak{p}^r}.
\end{align*}

\subsection[]{Reduction to an intermediate group}
\label{subsec:reduction-to-intermediate-group}
The group homomorphism 
$B_{\beta}:\frak{g}(O_{l-1})\to O_{l-1}$ ($X\mapsto B(X,\beta_{l-1})$)
induces a group homomorphism 
$$
 B_{\beta}^{\ast}:H^2(\frak{g}(\Bbb F),\frak{g}(O_{l-1}))\to 
                  H^2(\frak{g}(\Bbb F),O_{l-1}).
$$
Let us denote by 
$\mathcal{H}_{\beta}$ the group associated with the 2-cocycle 
$$
 c_{\beta}=B_{\beta}\circ c
 =\left[(\overline X,\overline Y)\mapsto
        2^{-1}\varpi^{l-2}B([X,Y],\beta)_{l-1}\right]
 \in Z^2(\frak{g}(\Bbb F),O_{l-1}).
$$
That is $\mathcal{H}_{\beta}=\frak{g}(\Bbb F)\times O_{l-1}$ with a
group operation 
$$
 (\overline X,s)\cdot(\overline Y,t)
 =(\overline{X+Y},s+t+2^{-1}\varpi^{l-2}B([X,Y],\beta)_{l-1}).
$$
Then the center of $\mathcal{H}_{\beta}$ is 
$Z(\mathcal{H}_{\beta})=\frak{g}_{\beta}(\Bbb F)\times O_{l-1}$, the
direct product of two additive groups $\frak{g}_{\beta}(\Bbb F)$ and
$O_{l-1}$. 

The inverse
image of $Z(\mathcal{H}_{\beta})$ with respect to the surjective group
homomorphism 
\begin{equation}
 \spadesuit:\Bbb G{\times}_{\frak{g}(\Bbb F)}\Bbb M\to\mathcal{H}_{\beta}
 \qquad
 \left((X;S_{l-1},T_{l-1})\mapsto
       (X,B(S,\beta)_{l-1})\right)
\label{eq:fund-surjection-on-hesenberg-like-group-for-general-alg-group}
\end{equation}
is 
$\left(\Bbb G{\times}_{\frak{g}(\Bbb F)}\Bbb M\right)_{\beta}
 =\left\{(X;S,T)\in\Bbb G{\times}_{\frak{g}(\Bbb F)}\Bbb M\mid
         X\in\frak{g}(\Bbb F)_{\overline\beta}\right\}$ 
which is 
mapped onto $Z(O_r,\beta)\subset K_{l-1}(O_r)$ by the surjection 
\eqref{eq:fundamental-surjection-for-general-alg-group}. 

Take a $\rho\in\frak{g}_{\beta}(\Bbb F)\sphat$ which defines group
homomorphisms
$$
 \chi_{\rho}
 =\rho\otimes\left[x_{l-1}\mapsto\tau(\varpi^{-(l-1)}x)\right]
 :Z(\mathcal{H}_{\beta})
  =\frak{g}_{\beta}(\Bbb F)\times O_{l-1}
 \to\Bbb C^{\times}
$$
and
$$
 \widetilde{\chi}_{\rho}:
  \left(\Bbb G{\times}_{\frak{g}(\Bbb F)}\Bbb M\right)_{\beta}
  \xrightarrow{\spadesuit}Z(\mathcal{H}_{\beta})
  \xrightarrow{\chi_{\rho}}\Bbb C^{\times}.
$$
On the other hand we have a group homomorphism
$$
 \widetilde{\psi}_0:
 \Bbb G{\times}_{\frak{g}(\Bbb F)}\Bbb M\to\Bbb C^{\times}
$$
defined by 
$\widetilde{\psi}_0(X;S_{l-1},T_{l-1})
 =\tau\left(\varpi^{-l}B(\lambda(X)+\varpi T,\beta)\right)$. Then 
$\widetilde{\psi}_0\cdot\widetilde{\chi}_{\beta}$ is trivial on the
 kernel of the surjection
 \eqref{eq:fundamental-surjection-for-general-alg-group}
 and it induces a group homomorphism $\psi_{\beta,\rho}\in Y_{\beta}$
 defined in subsection
 \ref{subsec:crude-weil-representation}. 

\subsection[]{Schr\"odinger representations over finite fields}
\label{subsec:schrodinger-representation}
Fix a $\rho\in\frak{g}_{\beta}(\Bbb F)\sphat$. 
Let us determine the 2-cocycle of the group extension
\begin{equation}
 0\to Z(\mathcal{H}_{\beta})\to\mathcal{H}_{\beta}
  \xrightarrow{\clubsuit}\Bbb V_{\beta}\to 0
\label{eq:central-extension-of-h-beta}
\end{equation}
where $\clubsuit:\mathcal{H}_{\beta}\to\Bbb V_{\beta}$
is defined by 
$(X,s)\mapsto\dot X\npmod{\frak{g}_{\beta}(\Bbb F)}$. 
Fix a 
$\Bbb F$-linear section $v\mapsto[v]$ of the exact sequence
$$
 0\to\frak{g}_{\beta}(\Bbb F)\to\frak{g}(\Bbb F)
  \to\Bbb V_{\beta}\to 0
$$
of $\Bbb F$-vector spaces and define a section 
$l:\Bbb V_{\beta}\to\mathcal{H}_{\beta}$ of the group
  extension \eqref{eq:central-extension-of-h-beta} by 
$l(v)=([v],0)$. Then we have
$$
 l(u)l(v)l(u+v)^{-1}
 =(0,2^{-1}\varpi^{l-2}B([X,Y],\beta)\npmod{\frak{p}^{l-1}})
$$
for 
$u=\dot{\overline X}, v=\dot{\overline Y}
 \in\Bbb V_{\beta}$ so that the 2-cocycle of the group extension 
\eqref{eq:central-extension-of-h-beta} is
$$
 \left[(\dot{\overline X},\dot{\overline Y})\mapsto
       2^{-1}\varpi^{l-2}B([X,Y],\beta)\npmod{\frak{p}^{l-1}}\right]
 \in Z^2(\Bbb V_{\beta},O_{l-1}).
$$
Define a group operation on 
$\Bbb H_{\beta}=\Bbb V_{\beta}\times Z(\mathcal{H}_{\beta})$
by 
$$
 \left(\dot{\overline X},z\right)\cdot\left(\dot{\overline Y},w\right)
 =\left(\dot{\overline{X+Y}},
    z+w+2^{-1}\varpi^{l-2}B([X,Y],\beta)\npmod{\frak{p}^{l-1}}\right).
$$
Then $\Bbb H_{\beta}$ is isomorphic to $\mathcal{H}_{\beta}$ by 
$(v,(Y,s))\mapsto([v]+Y,s)$.

 Let $H_{\beta}$ be the Heisenberg group
of the symplectic $\Bbb F$-space $\Bbb V_{\beta}$, that is 
$H_{\beta}=\Bbb V_{\beta}\times\Bbb C^1$ with a group
operation
$$
 (u,s)\cdot(v,t)
 =\left(u+v,s\cdot t\cdot
            \widehat\tau(2^{-1}\langle u,v\rangle_{\beta})
        \right).
$$
Then we have a group homomorphism 
$$
 \Bbb H_{\beta}=\Bbb V_{\beta}\times Z(\mathcal{H}_{\beta})\to H_{\beta}
 \qquad
 ((v,z)\mapsto(v,\chi_{\rho}(z)).
$$
Fix a polarization 
$\Bbb V_{\beta}=\Bbb W^{\prime}\oplus\Bbb W$ of the symplectic
$\Bbb F$-space $\Bbb V_{\beta}$. Let us denote by 
$L^2(\Bbb W^{\prime})$ 
the complex vector space of the complex-valued functions $f$ on
$\Bbb W^{\prime}$ with inner product 
$(f,f^{\prime})
 =\sum_{w\in\Bbb W^{\prime}}f(w)\overline{f^{\prime}(w)}$.
The Schr\"odinger representation $(\pi^{\beta},L^2(\Bbb W^{\prime}))$ 
of $H_{\beta}$ associated with the polarization is defined for 
$(v,s)\in H_{\beta}$ and $f\in L^2(\Bbb W^{\prime})$ by
$$
 \left(\pi^{\beta}(v,s)f\right)(w)
 =s\cdot\widehat\tau\left(
   2^{-1}\langle v_-,v_+\rangle_{\overline\beta}
   +\langle w,v_+\rangle_{\overline\beta}\right)\cdot f(w+v_-)
$$
where $v=v_-+v_+\in\Bbb V_{\beta}$ with 
$v_-\in\Bbb W^{\prime}, v_+\in\Bbb W^{\prime}$. 

Now an irreducible
representation $(\pi^{\beta,\rho},L^2(\Bbb W^{\prime}))$ of 
$\Bbb H_{\beta}$ is defined by 
$\pi^{\beta,\rho}(v,z)=\pi^{\beta}(v,\chi_{\rho}(z))$, and an
irreducible representation 
$(\widetilde\pi^{\beta,\rho},L^2(\Bbb W^{\prime}))$ of 
$\Bbb G{\times}_{\frak{g}(\Bbb F)}\Bbb M$ is defined by
$$
 \widetilde\pi^{\beta,\rho}:
 \Bbb G{\times}_{\frak{g}(\Bbb F)}\Bbb M
 \xrightarrow{\spadesuit}\mathcal{H}_{\beta}
 \,\tilde{\to}\,\Bbb H_{\beta}
 \xrightarrow{\pi^{\beta,\rho}}
 GL_{\Bbb C}(L^2(\Bbb W^{\prime})).
$$
Then $\widetilde{\psi}_0\cdot\widetilde{\pi}_{\beta,\rho}$ is trivial
on the kernel of 
$(\ast):\Bbb G{\times}_{\frak{g}(\Bbb F)}\Bbb M\to K_{l-1}(O_r)$ so
that it induces an irreducible representation 
$\pi_{\beta,\rho}$ of $K_{l-1}(O_r)$ on $L^2(\Bbb W^{\prime})$. 

\begin{prop}
\label{prop:another-expression-of-pi-beta-psi}
Take a $g=1+\varpi^{l-1}T\npmod{\frak{p}^r}\in K_{l-1}(O_r)$ 
with $T\in\frak{gl}_n(O)$. Then we have 
$T\npmod{\frak{p}^{l-1}}\in\frak{g}(O_{l-1})$ and
$$
 \pi_{\beta,\rho}(g)
  =\tau\left(\varpi^{-l}B(T,\beta)
            -2^{-1}\varpi^{-1}B(T^2,\beta)\right)\cdot
   \rho(Y)\cdot\pi^{\beta}(v,1)
$$
where $\overline T=[v]+Y\in\frak{g}(\Bbb F)$ with 
$v\in\Bbb V_{\beta}$ and 
$Y\in\frak{g}_{\beta}(\Bbb F)$. In particular
$\pi_{\beta,\rho}(h)$ is the homothety $\psi_{\beta,\rho}(h)$ for all 
$h\in Z(O_r,\beta)$.
\end{prop}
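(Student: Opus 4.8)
The plan is to read $\pi_{\beta,\rho}(g)$ off directly from its construction. Recall that $\pi_{\beta,\rho}$ is the representation of $K_{l-1}(O_r)$ obtained by descending $\widetilde\psi_0\cdot\widetilde\pi^{\beta,\rho}$ along the surjection $(\ast)$ of \eqref{eq:fundamental-surjection-for-general-alg-group}; since that descent is well-defined (already established in the text), it suffices to exhibit one preimage of $g$ under $(\ast)$ and evaluate $\widetilde\psi_0\cdot\widetilde\pi^{\beta,\rho}$ on it. The membership $T\npmod{\frak p^{l-1}}\in\frak g(O_{l-1})$ comes for free from the middle isomorphism in \eqref{eq:canonical-surjection-of-k-l-1-to-g(f)}, which carries the image of $g$ in $K_{l-1}(O_{r-1})$ to $T\npmod{\frak p^{l-1}}$; in particular $\heartsuit(g)=\overline T$.

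To build a preimage I would set $\lambda=\lambda(\overline T)\in\frak g(O)$, so $T-\lambda=\varpi U$ for some $U\in\frak{gl}_n(O)$, and multiply out modulo $\frak p^r$, using $l(\overline T)^{-1}=1-\varpi^{l-1}\lambda+2^{-1}\varpi^{2l-2}\lambda^2\npmod{\frak p^r}$ and discarding the terms of order $\geq\varpi^{3l-3}$ (which vanish since $l\geq 2$). This gives
\[
 l(\overline T)^{-1}\cdot g=1+\varpi^l V\npmod{\frak p^r},
 \qquad
 V=U+\varpi^{l-2}\bigl(2^{-1}\lambda^2-\lambda T\bigr),
\]
so that $g=(\ast)\bigl(\overline T;V_{l-1},0\bigr)$, with $V_{l-1}\in\frak g(O_{l-1})$ because the left-hand side lies in $K_l(O_r)$. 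Writing $\overline T=[v]+Y$ with $v=\dot{\overline T}\in\Bbb V_\beta$ and $Y\in\frak g_\beta(\Bbb F)$, I would then unwind the chain $\Bbb G{\times}_{\frak g(\Bbb F)}\Bbb M\xrightarrow{\spadesuit}\mathcal{H}_\beta\,\tilde{\to}\,\Bbb H_\beta\to H_\beta$ on this element: one has $\widetilde\psi_0(\overline T;V_{l-1},0)=\tau(\varpi^{-l}B(\lambda,\beta))$, while $\spadesuit(\overline T;V_{l-1},0)=(\overline T,B(V,\beta)_{l-1})$ corresponds to $(v,(Y,B(V,\beta)_{l-1}))\in\Bbb H_\beta$, and, using that the Schr\"odinger representation $\pi^{\beta}$ is linear in its $\Bbb C^1$-argument together with $\chi_\rho(Y,s_{l-1})=\rho(Y)\tau(\varpi^{-(l-1)}s)$, this yields
\[
 \pi_{\beta,\rho}(g)=\tau\bigl(\varpi^{-l}B(\lambda,\beta)+\varpi^{-(l-1)}B(V,\beta)\bigr)\cdot\rho(Y)\cdot\pi^{\beta}(v,1).
\]

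It then remains to simplify the scalar. Substituting $V$ and $U=\varpi^{-1}(T-\lambda)$ and collecting powers of $\varpi$, the argument of $\tau$ reduces modulo $O$ to $\varpi^{-l}B(T,\beta)+\varpi^{-1}B(2^{-1}\lambda^2-\lambda T,\beta)$; since $\lambda\equiv T\pmod{\frak p}$ the matrix $2^{-1}\lambda^2-\lambda T+2^{-1}T^2$ lies in $\frak p\cdot\frak{gl}_n(O)$, so the second summand equals $-(2\varpi)^{-1}B(T^2,\beta)$ modulo $O$, and the asserted formula follows. For the "in particular" part, if $h\in Z(O_r,\beta)$ then $\overline T=\heartsuit(h)\in\frak g_\beta(\Bbb F)$, whence $v=0$, $Y=\overline T$, $\pi^{\beta}(0,1)=\mathrm{id}$, and the formula collapses to $\tau\bigl(\varpi^{-l}B(T,\beta)-(2\varpi)^{-1}B(T^2,\beta)\bigr)\cdot\rho(\overline T)=\widetilde\psi_\beta(h)\cdot(\rho\circ\heartsuit)(h)=\psi_{\beta,\rho}(h)$.

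I do not expect a genuine obstacle: the whole argument is bookkeeping. The only points demanding care are keeping the powers of $\varpi$ straight through the two reductions (modulo $\frak p^r$ and modulo $\frak p^{l-1}$) and through the three successive identifications above, and invoking the already-proved fact that $\widetilde\psi_0\cdot\widetilde\pi^{\beta,\rho}$ descends along $(\ast)$ in order to know that the answer is independent of the chosen preimage.
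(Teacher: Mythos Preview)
Your proposal is correct and follows essentially the same approach as the paper: both choose a preimage of $g$ under the surjection $(\ast)$ of the form $(\overline T;S_{l-1},0)$, evaluate $\widetilde\psi_0\cdot\widetilde\pi^{\beta,\rho}$ on it via the chain $\spadesuit$, $\mathcal H_\beta\,\tilde\to\,\Bbb H_\beta$, $\pi^{\beta,\rho}$, and then simplify the resulting $\tau$-argument using $\lambda(\overline T)\equiv T\pmod{\frak p}$. The paper computes $\varpi S\equiv T-\lambda(\overline T)-2^{-1}\varpi^{l-1}\lambda(\overline T)^2\pmod{\frak p^l}$ and substitutes directly, whereas you expand $V=U+\varpi^{l-2}(2^{-1}\lambda^2-\lambda T)$ and reduce $2^{-1}\lambda^2-\lambda T+2^{-1}T^2\in\frak p\cdot\frak{gl}_n(O)$; these are the same computation in different packaging.
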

\begin{proof}
By the definition we have
\begin{align*}
 \pi_{\beta,\psi}(&l(X)(1+\varpi^lS\npmod{\frak{p}^r}))\\
 =&\psi_0(X,0)\cdot\widetilde\pi^{\beta,\rho}([v]+Y;S_{l-1},0)\\
 =&\psi\left(l(Y)(1+\varpi^lS\npmod{\frak{p}^r})\right)\cdot
   \tau\left(\varpi^{-l}B(\lambda(X)-\lambda(Y),\beta)\right)\cdot
   \pi^{\beta}(v,1)\\
 =&\tau\left(
     \varpi^{-(l-1)}B(S,\beta)+\varpi^{-l}B(\lambda(X),\beta)\right)
      \cdot\rho(Y)\cdot\pi^{\beta}(v,1)
\end{align*}
where $X=[v]+Y\in\frak{g}(\Bbb F)$ with $v\in\Bbb V_{\beta}$
and $Y\in\frak{g}_{\beta}(\Bbb F)$ and 
put $1+\varpi^{l-1}T\equiv l(X)(1+\varpi^lS)\npmod{\frak{p}^r}$ with 
$X\in\frak{g}(\Bbb F)$ and $S\in\frak{g}(O)$. Then we have
$$
 1+\varpi^{l-1}T\equiv 1+\varpi^{l-1}\lambda(X)
 \npmod{\frak{p}^l}
$$
so that we have $T\npmod{\frak p}=X\in\frak{g}(\Bbb F)$ and 
$$
 \varpi S\equiv 
 T-\lambda(\overline T)-2^{-1}\varpi^{l-1}\lambda(\overline T)^2
 \npmod{\frak{p}^l}.
$$
Then we have
\begin{align*}
 \pi_{\beta,\rho}(g)
 =&\pi^{\beta,\rho}\left(l(\overline T)(1+\varpi^lS)_r\right)\\
 =&\tau\left(\varpi^{-(l-1)}B(S,\beta)
             +\varpi^{-l}B(\lambda(\overline T),\beta)\right)\cdot
   \rho(Y)\cdot\pi^{\beta}(v,1)\\
 =&\tau\left(\varpi^{-l}B(T,\beta)-2^{-1}\varpi^{-1}B(T^2,\beta)
             \right)\cdot\rho(Y)\cdot\pi^{\beta}(v,1).
\end{align*}
\end{proof}

This proposition shows that the irreducible representation 
$(\pi_{\beta,\rho},L^2(\Bbb W^{\prime}))$ $K_{l-1}(O_r)$ 
is exactly the irreducible representation $\pi_{\psi}$ with 
$\psi=\psi_{\beta,\rho}\in Y_{\beta}$ defined in Proposition
\ref{prop:existence-of-fundamental-rep-associated-with-psi}. 

\subsection[]{Description of Schur multiplier}
\label{subsec:action-of-g-o-f-beta}
Fix a $\rho\in\frak{g}_{\beta}(\Bbb F)\sphat$. 
In this subsection we will study the conjugate action of 
$g_r=g\npmod{\frak{p}^r}\in G(O_r,\beta)$ on $K_{l-1}(O_r)$ and on 
$\pi_{\beta,\rho}$. For any $X\in\frak{g}(\Bbb F)$, we have
$$
 g_r^{-1}l(X)g_r
 =l\left(\text{\rm Ad}(\overline g)^{-1}X)
         +\varpi^l\nu(X,g)\npmod{\frak{p}^r}\right)
$$
with $\nu(X,g)\in\frak{g}(O)$ such that 
$$
 \text{\rm Ad}(g)^{-1}\lambda(X)-\lambda(\text{\rm Ad}(g)^{-1}X)
 =\varpi\cdot\nu(X,g).
$$
Then we have
\begin{align*}
 &g_r^{-1}l(X)(1+\varpi^l(S+T)_r)g_r\\
 =&l(\text{\rm Ad}(\overline g)^{-1}X)
   \left(1+\varpi^l(\text{\rm Ad}(g)^{-1}S+\text{\rm Ad}(g)^{-1}T
          +\nu(X,g))\right)\npmod{\frak{p}^r}
\end{align*}
and an action of $g_r\in G(O_r,\beta)$ on 
$(X;S_{l-1},T_{l-1})
 \in\Bbb G{\times}_{\frak{g}(\Bbb F)}\Bbb M$ is defined by 
\begin{equation}
 (X;S_{l-1},T_{l-1})^{g_r}
 =\left(\text{\rm Ad}(\overline g)^{-1}X;
         (\text{\rm Ad}(g)^{-1}S)_{l-1},
         (\text{\rm Ad}(g)^{-1}T+\nu(X,g))_{l-1}\right)
\label{eq:action-on-fibre-product}.
\end{equation}
The action \eqref{eq:action-on-fibre-product} is compatible with the
action  
$$
 (X,s)^{g_r}=(\text{\rm Ad}(\overline g)^{-1}X,s)
$$ 
of $g_r\in G(O_r,\beta)$ on $(X,s)\in\mathcal{H}_{\beta}$ via
the surjection
\eqref{eq:fund-surjection-on-hesenberg-like-group-for-general-alg-group}. 
If
we put $X=[v]+Y\in\frak{g}(\Bbb F)$ with $v\in\Bbb V_{\beta}$
and $Y\in\frak{g}_{\beta}(\Bbb F)$, then we have
$$
 \text{\rm Ad}(\overline g)^{-1}X
 =[v\sigma_{\overline g}]+\gamma(v,\overline g)
                        +\text{\rm Ad}(\overline g)^{-1}Y
$$
in the notations of subsection
\ref{subsec:definition-of-schur-multiplier}. So 
$g_r\in G(O_r,\beta)$ acts on $(v,(Y,s))\in\Bbb H_{\beta}$ by 
$$
 (v,(Y,s))^{g_r}
 =\left(v\sigma_{\overline g},
   (\text{\rm Ad}(\overline g)^{-1}Y+\gamma(v,\overline g),s)\right).
$$
In particular $g_r\in G_{\beta}(O_r)$ acts on 
$(v,z)\in\Bbb H_{\beta}$ by 
$$
 (v,z)^{g_r}
 =(v\sigma_{\overline g},(\gamma(v,\overline g),0)\cdot z).
$$
There exists a group homomorphism 
$T:Sp(\Bbb V_{\beta})\to GL_{\Bbb C}(L^2(\Bbb W^{\prime}))$
  such that 
$$
 \pi^{\beta}(v\sigma,s)
 =T(\sigma)^{-1}\circ\pi^{\beta}(v,s)\circ T(\sigma)
$$
for all $\sigma\in Sp(\Bbb V_{\beta})$ and $(v,s)\in
H_{\beta}$ (see \cite[Th.2.4]{Gerardin1977}). 
Then we have
\begin{align*}
 \pi^{\beta,\rho}((v,z)^{g_r})
 =&\pi^{\beta,\rho}
    \left(v\sigma_{\overline g},(\gamma(v,\overline g),0)\cdot z\right)\\
 =&\pi^{\beta}
    \left(v\sigma_{\overline g},
           \rho(\gamma(v,\overline g))\cdot\chi_{\rho}(z)\right)\\
 =&\pi^{\beta}(v\sigma_{\overline g},
    \widehat\tau\left(\langle v,v_{\overline g}\rangle_{\beta}
                       \right)\cdot\chi_{\rho}(z))\\
 =&T(\sigma_{\overline g})^{-1}\circ
   \pi^{\beta}(v,
    \widehat\tau\left(\langle v,v_{\overline g}\rangle_{\beta}
                       \right)\cdot\chi_{\rho}(z))\circ
   T(\sigma_{\overline g})\\
 =&T(\sigma_{\overline g})^{-1}\circ
   \pi^{\beta}
    \left((v_{\overline g},1)^{-1}(v,\chi_{\rho}(z))(v_{\overline g},1)
           \right)\circ
   T(\sigma_{\overline g})\\
 =&T(\sigma_{\overline g})^{-1}\circ
   \pi^{\beta}(v_{\overline g},1)^{-1}\circ
   \pi^{\beta,\rho}(v,z)\circ
   \pi^{\beta}(v_{\overline g},1)\circ
   T(\sigma_{\overline g}).
\end{align*}
If we put
$$
 U(g_r)
 =\pi^{\beta}(v_{\overline g},1)\circ T(\sigma_{\overline g})
 \in GL_{\Bbb C}(L^2(\Bbb W^{\prime}))
$$
for $g_r\in G_{\beta}(O_r)$ then we have
$$
 U(g_r)\circ U(h_r)
 =c_{\beta,\rho}(\overline g,\overline h)\cdot U((gh)_r)
$$
for all $g_r,h_r\in G_{\beta}(O_r)$, in fact 
\begin{align*}
 U(g_r)\circ U(h_r)
 &=\pi^{\beta}(v_{\overline g},1)\circ T(\sigma_{\overline g})\circ 
   \pi^{\beta}(v_{\overline h},1)\circ T(\sigma_{\overline h})\\
 &=\pi^{\beta}(v_{\overline g},1)\circ 
   \pi^{\beta}(v_{\overline h}^{{\overline g}^{-1}},1)\circ
   T(\sigma_{\overline g})\circ T(\sigma_{\overline h})\\
 &=\pi^{\beta}\left(v_{\overline g}+v_{{\overline h}^{{\overline g}^{-1}}},
    \widehat\tau\left(
      2^{-1}\langle v_{\overline g},v_{{\overline h}^{{\overline g}^{-1}}}\rangle_{\beta}
                    \right)\right)\circ
   T(\sigma_{\overline{gh}})\\
 &=c_{\beta,\rho}(\overline g,\overline h)\cdot
   \pi^{\beta}(v_{\overline{gh}},1)\circ T(\sigma_{\overline{gh}}).
\end{align*}
On the other hand 
\begin{align*}
  \widetilde\psi_0\left((X;S_{l-1},T_{l-1})^{g_r^{-1}}
                       \right)
 &=\tau\left(\varpi^{-l}B(\lambda(X)+\varpi T,\text{\rm Ad}(g)\beta)
             \right)\\
 &=\tau\left(\varpi^{-l}B(\lambda(X)+\varpi T,\beta)
             \right)
\end{align*}
for all $g_r\in G_{\beta}(O_r)$. That is $\widetilde{\psi}_0$
is invariant under the conjugate action of $G_l(O_r,\beta)^{(c)}$. 
Hence we have
$$
 \pi_{\beta,\psi}(g_r^{-1}hg_r)
 =U(g_r)^{-1}\circ\pi_{\beta,\psi}(h)\circ
  U(g_r)
$$
for all $g_r\in G_{\beta}(O_r)$ and $h\in K_{l-1}(O_r)$. 

\subsection[]{Triviality of Schur multiplier}
\label{subsec:vanishing-of-c-beta-rho}
The following proposition is the keystone of this paper.

\begin{prop}\label{prop:vanishing-of-c-beta-rho}
If the characteristic polynomial of 
$\overline\beta\in\frak{g}(\Bbb F)\subset\frak{gl}_n(\Bbb F)$ is the
minimal polynomial of $\overline\beta\in M_n(\Bbb F)$, then the Schur
  multiplier 
$[c_{\beta,\rho}]\in H^2(G_{\beta}(\Bbb F),\Bbb C^{\times})$ is
  trivial for all $\rho\in\frak{g}(\Bbb F)\sphat$.
\end{prop}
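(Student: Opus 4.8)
The plan is to reduce to the case $G=GL_n$ via Proposition \ref{prop:regular-schur-multiplier-is-restriction-from-upper-group} with $H=GL_n$, and then to deduce the vanishing from the extendability theorem of \cite{Krakovski2018}, \cite{Stasinski-Stevens2017} by an inflation argument. First I would use the minimal-polynomial hypothesis to pin down the centralizers: by subsection \ref{subsec:detailed-description-of-the-case-gl-n} it forces $\{X\in M_n(\Bbb F):X\overline\beta=\overline\beta X\}=\Bbb F[\overline\beta]$ and $\{X\in M_n(O_l):X\beta\equiv\beta X\nnpmod{\frak{p}^l}\}=O_l[\beta_l]$ for every $l$. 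Hence $GL_{n,\beta}(\Bbb F)=\Bbb F[\overline\beta]^{\times}$ and $GL_{n,\beta}(O_r)=O_r[\beta_r]^{\times}$ are abelian, $\frak{g}_{\beta}(\Bbb F)\subset\Bbb F[\overline\beta]$, and every element of $G_{\beta}(\Bbb F)\subset GL_{n,\beta}(\Bbb F)=\Bbb F[\overline\beta]^{\times}$ centralizes $\frak{g}_{\beta}(\Bbb F)$; therefore $G_{\beta}(\Bbb F)^{(c)}=G_{\beta}(\Bbb F)$ and $GL_{n,\beta}(\Bbb F)^{(c)}=GL_{n,\beta}(\Bbb F)$, so $G_{\beta}(\Bbb F)^{(c)}\subset GL_{n,\beta}(\Bbb F)^{(c)}$. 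Since the trace form on $\frak{gl}_n(\Bbb F)$ is non-degenerate, Proposition \ref{prop:regular-schur-multiplier-is-restriction-from-upper-group} applies and $[c_{\beta,\rho}]\in H^2(G_{\beta}(\Bbb F),\Bbb C^{\times})$ is the restriction of $[c_{\beta,\widetilde\rho}]\in H^2(GL_{n,\beta}(\Bbb F),\Bbb C^{\times})$. Thus it is enough to show that $[c_{\beta,\rho}]\in H^2(GL_{n,\beta}(\Bbb F),\Bbb C^{\times})$ is trivial for $G=GL_n$ and for an arbitrary character $\rho$ of $\frak{gl}_{n,\beta}(\Bbb F)$.

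Now I take $G=GL_n$, which satisfies conditions I), II), III) of subsection \ref{subsec:fundamental-setting}, so the constructions of subsections \ref{subsec:crude-weil-representation}--\ref{subsec:action-of-g-o-f-beta} are available. The operators $U(g_r)$ of subsection \ref{subsec:action-of-g-o-f-beta} satisfy $U(g_r)\circ U(h_r)=c_{\beta,\rho}(\overline g,\overline h)\cdot U((gh)_r)$, which says precisely that the obstruction class $[c_U]\in H^2(GL_{n,\beta}(O_r),\Bbb C^{\times})$ to extending $\pi_{\psi}$ ($\psi=\psi_{\beta,\rho}$) from $K_{l-1}(O_r)$ to $GL_{n,\beta}(O_r)\cdot K_{l-1}(O_r)$ equals the image of $[c_{\beta,\rho}]$ under inflation along the reduction $p\colon GL_{n,\beta}(O_r)\twoheadrightarrow GL_{n,\beta}(\Bbb F)$. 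By \cite{Krakovski2018}, \cite{Stasinski-Stevens2017} the representation $\pi_{\psi}$ does extend to $GL_{n,\beta}(O_r)\cdot K_{l-1}(O_r)$, so $[c_U]=0$ and hence $p^{*}[c_{\beta,\rho}]=0$. It remains to see that $p^{*}$ is injective. Because $GL_{n,\beta}(O_r)=O_r[\beta_r]^{\times}$ is abelian, in the central extension $1\to N\to GL_{n,\beta}(O_r)\to GL_{n,\beta}(\Bbb F)\to 1$ with $N=1+\varpi\cdot O_r[\beta_r]$ the conjugation action on $N$ is trivial and every character of $N$ extends to $GL_{n,\beta}(O_r)$; hence the transgression $\mathrm{Hom}(N,\Bbb C^{\times})\to H^2(GL_{n,\beta}(\Bbb F),\Bbb C^{\times})$ in the five-term exact sequence of the extension vanishes, i.e. the inflation $p^{*}$ is injective. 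Therefore $[c_{\beta,\rho}]=0$ for $GL_n$, and by the reduction step $[c_{\beta,\rho}]=0$ for the original $G$.

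The only non-formal ingredient is the $GL_n$ extendability theorem of \cite{Krakovski2018}, \cite{Stasinski-Stevens2017}; the rest is the five-term exact sequence together with the commutativity of the centralizers. I expect the points requiring the most care to be the bookkeeping that identifies $[c_U]$ with the inflation of $[c_{\beta,\rho}]$ (this is essentially what subsection \ref{subsec:action-of-g-o-f-beta} already computes, once one has checked that $GL_n$ satisfies conditions I)--III)) and the observation — which is exactly where the minimal-polynomial hypothesis enters — that the abelianness of $GL_{n,\beta}(O_r)$ forces that inflation map to be injective.
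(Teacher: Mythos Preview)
Your proposal is correct and follows essentially the same route as the paper's own proof: reduce to $GL_n$ via Proposition \ref{prop:regular-schur-multiplier-is-restriction-from-upper-group}, invoke the extendability result of \cite{Krakovski2018,Stasinski-Stevens2017} to kill $[c_U]$, and then use the five-term (inflation--restriction) exact sequence together with the commutativity of $GL_{n,\beta}(O_r)=O_r[\beta_r]^{\times}$ to conclude that the inflation map $H^2(GL_{n,\beta}(\Bbb F),\Bbb C^{\times})\to H^2(GL_{n,\beta}(O_r),\Bbb C^{\times})$ is injective. The paper presents the two steps in the opposite order and phrases the injectivity of inflation as surjectivity of the restriction $H^1(GL_{n,\beta}(O_r),\Bbb C^{\times})\to H^1(K_1(O_r)\cap GL_{n,\beta}(O_r),\Bbb C^{\times})$, but this is the same argument; your explicit verification that $G_{\beta}(\Bbb F)^{(c)}=G_{\beta}(\Bbb F)$ and $GL_{n,\beta}(\Bbb F)^{(c)}=GL_{n,\beta}(\Bbb F)$ under the minimal-polynomial hypothesis is a detail the paper leaves implicit.
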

\begin{proof}
We will divide the proof into two parts.

1) The case of $G=GL_n$. In this case, Corollary 5.1 of 
\cite{Stasinski-Stevens2017} shows that the Schur multiplier 
$[c_U]\in H^2(G_{\beta}(O_r),\Bbb C^{\times})$ is trivial. On the
other hand we have the inflation-restriction exact sequence 
\begin{align*}
 1&\to H^1(G_{\beta}(\Bbb F),\Bbb C^{\times})
   \xrightarrow{\text{\rm inf}}
       H^1(G_{\beta}(O_r),\Bbb C^{\times})
   \xrightarrow{\text{\rm res}}
       H^1(K_1(O_r)\cap G_{\beta}(O_r),\Bbb C^{\times})^{G_{\beta}(\Bbb F)}\\
  &\to H^2(G_{\beta}(\Bbb F),\Bbb C^{\times})
   \xrightarrow{\text{\rm inf}}
       H^2(G_{\beta}(O_r),\Bbb C^{\times})
\end{align*}
induced by the exact sequence
$$
 1\to K_1(O_r)\cap G_{\beta}(O_r)\to G_{\beta}(O_r)\to G_{\beta}(\Bbb F)\to 1.
$$
Since we have
\begin{align*}
 H^1(G_{\beta}(O_r),\Bbb C^{\times})
 &=\text{\rm Hom}(G_{\beta}(O_r),\Bbb C^{\times}),\\
 H^1(K_1(O_r)\cap G_{\beta}(O_r),\Bbb C^{\times})^{G_{\beta}(\Bbb F)}
 &=\text{\rm Hom}(K_1(O_r)\cap G_{\beta}(O_r),\Bbb C^{\times})
\end{align*}
and $K_1(O_r)\cap G_{\beta}(O_r)\subset G_{\beta}(O_r)$ are finite
commutative groups, the restriction mapping
$$
 \text{\rm res}:H^1(G_{\beta}(O_r),\Bbb C^{\times})\to
  H^1(K_1(O_r)\cap G_{\beta}(O_r),\Bbb C^{\times})^{G_{\beta}(\Bbb F)}
$$
is surjective. Hence the inflation mapping
$$
 \text{\rm inf}:H^2(G_{\beta}(\Bbb F),\Bbb C^{\times})\to
                H^2(G_{\beta}(O_r),\Bbb C^{\times})
$$
is injective. Since the results of the preceding subsections show that 
the Schur multiplier 
$[c_U]\in H^2(G_{\beta}(O_r),\Bbb C^{\times})$ is the image of 
$[c_{\beta,\rho}]\in H^2(G_{\beta}(\Bbb F),\Bbb C^{\times})$ under the
inflation mapping, the statement of the proposition is established
for the group $G=GL_n$.
\renewcommand{\thefootnote}{\fnsymbol{footnote}}
\footnote[2]{This argument is presented by the referee.}

2) The general case of $G\subset GL_n$. We have 
$G_{\beta}(\Bbb F)\subset GL_{n,\beta}(\Bbb F)$. Then Proposition 
\ref{prop:regular-schur-multiplier-is-restriction-from-upper-group}
says that the Schur multiplier 
$[c_{\beta,\rho}]\in H^2(G_{\beta}(\Bbb F),\Bbb C^{\times})$ is the
image of the Schur multiplier 
$[c_{\beta,\widetilde\rho}]\in 
 H^2(GL_{n,\beta}(\Bbb F),\Bbb C^{\times})$ under the restriction
 mapping
$$
 \text{\rm res}:H^2(GL_{n,\beta}(\Bbb F),\Bbb C^{\times})\to
                H^2(G_{\beta}(\Bbb F),\Bbb C^{\times}).
$$
Since we have shown in the part one of the proof that 
$[c_{\beta,\widetilde\rho}]\in H^2(GL_{n,\beta},\Bbb C^{\times})$ is
trivial, so is 
$[c_{\beta,\rho}]\in H^2(G_{\beta}(\Bbb F),\Bbb C^{\times})$.
\end{proof}

Now we have established the triviality of the Schur multiplier 
$$
 [c_{\beta,\rho}]\in H^2(G_{\beta}(\Bbb F),\Bbb C^{\times})
$$ 
which implies the triviality of the Schur multiplier 
$[c_U]\in H^2(G_{\beta}(O_r),\Bbb C^{\times})$ due to the relation 
\eqref{eq:c-u-is-c-beta-theta}. Then 
Proposition \ref{prop:generalized-main-result-for-odd-r} and 
Proposition \ref{prop:fundamental-bijection-of-parameter} give the
bijection presented in our main Theorem 
\ref{th:main-result} in the case of $r$ being odd.

It may be quite interesting if we can find a counter example to the
following statement;
\begin{quote}
Let $G$ be a connected reductive algebraic group defined over 
$\Bbb F$ and $\frak{g}$ the Lie algebra scheme of $G$. Take a 
$\beta\in\frak{g}(\Bbb F)$ which is smoothly regular with respect to $G$ and
  $G_{\beta}$ is commutative. Then the Schur multiplier 
$[c_{\beta,\rho}]\in H^2(G_{\beta}(\Bbb F),\Bbb C^{\times})$ is trivial
  for all $\rho\in\frak{g}(\Bbb F)\sphat$.
\end{quote}

\section{Classical groups}
\label{sec:classical-group}
In this section, we will apply Theorem \ref{th:main-result} to
the special linear group, the symplectic group and the orthogonal group.

\subsection[]{Special linear group}
\label{subsec:application-to-sl-n}
Let $G=SL_n$ be the $O$-group subscheme $G\subset GL_n$ defined by 
$$
 G=\{g\in GL_n\mid\det g=1\}.
$$
Its Lie algebra is
$$
 \frak{g}=\frak{sl}_n=\{X\in\frak{gl}_n\mid\text{\rm tr}(X)=0\}.
$$
Then

\begin{prop}\label{prop:sl-n-is-smooth-with-condition-i-ii-iii}
\begin{enumerate}
\item $G=SL_n$ is smooth over $O$,
\item the conditions II) and III) of subsection 
\ref{subsec:fundamental-setting} hold for $G=SL_n$,
\item the condition I) of subsection 
\ref{subsec:fundamental-setting} holds for $G=SL_n$ if and only if 
$n$ is prime to
the characteristic of $\Bbb F$.
\end{enumerate}
\end{prop}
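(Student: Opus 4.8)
The plan is to treat the three assertions separately, working throughout with the matrix descriptions $SL_n=\{g\in GL_n\mid\det g=1\}$ and $\frak{sl}_n=\{X\in\frak{gl}_n\mid\text{\rm tr}(X)=0\}$; for $SL_n$ each point comes down to expanding a single determinant. For the smoothness of $SL_n$ over $O$ I would view it as the hypersurface $V(\det-1_n)$ in $\Bbb A_O^{n^2}=\text{\rm Spec}\,O[x_{ij}]$ and apply the Jacobian criterion: the differential of $\det-1$ is $\sum_{i,j}C_{ij}(x)\,dx_{ij}$, with $C_{ij}$ the $(i,j)$-cofactor, and Laplace expansion gives $\sum_j x_{ij}C_{ij}(x)=\det=1$ on $SL_n$, so the cofactors generate the unit ideal there and the differential is nowhere zero; hence $SL_n$ is smooth over $O$ of relative dimension $n^2-1$, with no hypothesis on the characteristic.

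For conditions II) and III) I would use the expansion $\det(1_n+M)=\sum_{k=0}^n e_k(M)$, where $e_k(M)$ is the sum of the $k\times k$ principal minors of $M$, homogeneous of degree $k$ in the entries of $M$, so that entries of $M$ in $\frak{p}^s$ force $e_k(M)\in\frak{p}^{ks}$. For condition II), with $r=l+l^{\prime}$, $l^{\prime}\le l$, and $X\in\frak{sl}_n(O)$, taking $M=\varpi^lX$ gives $e_1(M)=\varpi^l\text{\rm tr}(X)=0$ and $e_k(M)\in\frak{p}^{kl}\subset\frak{p}^r$ for $k\ge2$ (as $2l\ge r$), so $\det(1_n+\varpi^lX)\equiv1\pmod{\frak{p}^r}$ and the map lands in $SL_n(O_r)$; it factors through $\frak{g}(O_{l^{\prime}})$, is a homomorphism since the cross term $\varpi^{2l}XY$ vanishes mod $\frak{p}^r$, and is injective, while its surjectivity onto $K_l(O_r)$ I would get by writing any $g\in K_l(O_r)$ as $1_n+\varpi^lZ$ with $Z\in M_n(O_{l^{\prime}})$ and reading off from the same expansion that $\det g=1$ forces $\text{\rm tr}(Z)\equiv0\pmod{\frak{p}^{l^{\prime}}}$, i.e. $Z\in\frak{sl}_n(O_{l^{\prime}})$ (using that $\frak{sl}_n(O)\to\frak{sl}_n(O_{l^{\prime}})$ is surjective, $\frak{sl}_n(O)$ being $O$-free). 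For condition III), where $r=2l-1\ge3$ hence $l\ge2$, the prescribed matrix is $E(\varpi^{l-1}X)$ with $E(Y)=1_n+Y+2^{-1}Y^2$ (legitimate since $2\in O^{\times}$, $F$ being non-dyadic) and $X\in\frak{sl}_n(O)$: here $M=\varpi^{l-1}X+2^{-1}\varpi^{2l-2}X^2$ has entries in $\frak{p}^{l-1}$, so $e_k(M)\in\frak{p}^{k(l-1)}\subset\frak{p}^r$ for $k\ge3$ (as $3(l-1)\ge2l-1$), and modulo $\frak{p}^r$ one computes $e_1(M)\equiv2^{-1}\varpi^{2l-2}\text{\rm tr}(X^2)$ and $e_2(M)=\tfrac12\big((\text{\rm tr}\,M)^2-\text{\rm tr}(M^2)\big)\equiv-2^{-1}\varpi^{2l-2}\text{\rm tr}(X^2)$, using $\text{\rm tr}(X)=0$, $(\text{\rm tr}\,M)^2\in\frak{p}^{4(l-1)}\subset\frak{p}^r$ and $M^2\equiv\varpi^{2l-2}X^2\pmod{\frak{p}^r}$; these cancel, so $\det E(\varpi^{l-1}X)\equiv1\pmod{\frak{p}^r}$, and since the matrix is $\equiv1_n\pmod{\frak{p}^{l-1}}$ it lies in $K_{l-1}(O_r)$, which is condition III).

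For condition I), I would use that the trace form on $\frak{gl}_n(\Bbb F)$ is non-degenerate — its Gram matrix on the matrix units, $B(E_{ij},E_{kl})=\delta_{jk}\delta_{il}$, is a permutation matrix — together with the decomposition $\frak{gl}_n(\Bbb F)=\frak{sl}_n(\Bbb F)\oplus\Bbb F\cdot1_n$, which is valid exactly when $\text{\rm char}(\Bbb F)\nmid n$ so that $\text{\rm tr}(1_n)=n\ne0$; since $B(X,1_n)=\text{\rm tr}(X)=0$ for $X\in\frak{sl}_n(\Bbb F)$ this is an orthogonal decomposition and $B(1_n,1_n)=n\ne0$, so restricting the non-degenerate $B$ to the orthogonal summand $\frak{sl}_n(\Bbb F)$ keeps it non-degenerate. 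No step here is deep; the one that needs genuine care is the $\frak{p}$-adic bookkeeping in condition III), where the degree-one and degree-two elementary-symmetric contributions to $\det E(\varpi^{l-1}X)$ individually have valuation only $2l-2=r-1$ and must be seen to cancel — a shadow of $\det\circ\exp=\exp\circ\,\text{\rm tr}$ — in order to land in $SL_n(O_r)$; I would also flag that the hypothesis $\text{\rm char}(\Bbb F)\nmid n$ in the last part is precisely (and only) what is used to split off $1_n$ and, equivalently, what fails to be available for a general subgroup $G\subset GL_n$.
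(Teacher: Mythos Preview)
Your proof is correct; each of the three parts goes through as written. The overall strategy, however, differs from the paper's in two places. For smoothness you use the Jacobian criterion on the hypersurface $\det=1$, whereas the paper verifies the infinitesimal lifting property directly: given $g\in M_n(R)$ with $\det g=1+a$, $a\in\frak a$, $\frak a^2=0$, it corrects $g$ by the diagonal matrix $\text{diag}(1-a,1,\dots,1)$. Both are standard; yours is perhaps the quicker check here, while the paper's lifting argument is what it reuses verbatim for $Sp_{2n}$ and $SO(S)$ later in the section, so it pays off in uniformity.

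For conditions II) and III) you expand $\det(1_n+M)=\sum_k e_k(M)$ via principal minors and track $\frak p$-adic valuations of the $e_k$, arranging the degree-$1$ and degree-$2$ cancellation in III) by the Newton identity $e_2=\tfrac12\big((\text{\rm tr}\,M)^2-\text{\rm tr}(M^2)\big)$. The paper instead writes $\det$ as the product over eigenvalues $\alpha_i$ of $X$ and simplifies $\sum_{i<j}\alpha_i\alpha_j+2^{-1}\sum_i\alpha_i^2=2^{-1}(\text{\rm tr}\,X)^2$. Your route has the minor advantage of staying over $O$ without an implicit passage to an extension where $X$ triangularizes; the paper's is a bit shorter to write and makes the ``shadow of $\det\circ\exp=\exp\circ\,\text{\rm tr}$'' you mention visible at a glance. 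For condition I) your orthogonal-decomposition argument and the paper's (show $X$ in the radical is scalar, then use $n\neq 0$) are essentially the same.
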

\begin{proof}
1) Take any $O$-algebra $R$ and an ideal $\frak{a}\subset R$ such that
$\frak{a}^2=0$. For any $g\npmod{\frak a}\in G(R/\frak{a})$ 
($g\in M_n(R)$), we have $\det g=1+a$ with $a\in\frak{a}$. Then 
$(1-a)(1+a)=1$ because $a^2\in\frak{a}^2=0$. Now
$$
 h=\begin{bmatrix}
    1-a& &      & \\
       &1&      & \\
       & &\ddots& \\
       & &      &1
   \end{bmatrix}\cdot g\in G(R)
$$
and $h\equiv g\npmod{\frak a}$. Hence the canonical mapping 
$G(R)\to G(R/\frak{a})$ is surjective, which means that 
$G=SL_n$ is smooth
over $O$ (see \cite[p.111, Cor. 4.6]{Demazure-Gabriel1970}).

2) Let $r=l+l^{\prime}$ with $l\geq l^{\prime}>0$. Then 
for any $X\in M_n(O)$ with the eigenvalues 
$\alpha_i$ ($1\leq i\leq n$), we have
\begin{align*}
 \det(1_n+\varpi^lX)
 &\equiv\prod_{i=1}^n(1+\varpi^l\alpha_i)\npmod{\frak{p}^r}\\
 &\equiv 1+\varpi^l\cdot\text{\rm tr}(X)\npmod{\frak{p}^r}
\end{align*}
because $2l\geq r$. Then 
$\det(1_n+\varpi^lX)\equiv 1\npmod{\frak{p}^r}$ if and only if 
$\text{\rm tr}(X)\equiv 0\npmod{\frak{p}^{l^{\prime}}}$. Hence the
  condition II) holds.
If $r=2l-1>1$ is odd, then we have
\begin{align*}
 &\det\left(1_n+\varpi^{l-1}X+2^{-1}\varpi^{2l-2}X^2\right)\\
 &=\prod_{i=1}^n\left(
    1+\varpi^l\alpha_i+2^{-1}\varpi^{2l-2}\alpha_i^2\right)\\
 &\equiv 1+\varpi^{l-1}\sum_{i=1}^n\alpha_i
  +\varpi^{2l-2}\left(\sum_{i<j}\alpha_i\alpha_j
                      +2^{-1}\sum_{i=1}^n\alpha_i^2\right)
  \npmod{\frak{p}^r}\\
 &\equiv 1+\varpi^{l-1}\text{\rm tr}(X)
   +2^{-1}\varpi^{2l-2}\left(\text{\rm tr}\,X\right)^2
  \npmod{\frak{p}^r}
\end{align*}
because $3l-3\geq r$. Then 
$1_n+\varpi^{l-1}X+2^{-1}\varpi^{2l-2}X^2\npmod{\frak{p}^r}
 \in G(O_r)$ if $\text{\rm tr}(X)=0$. Hence the condition III) holds.

3) Take a $X\in\frak{g}(\Bbb F)$ such that $\text{\rm tr}(XY)=0$ for
all $Y\in\frak{g}(\Bbb F)$. Then $X=x\cdot 1_n$ with 
$\text{\rm tr}(X)=0$. Because $n$ is prime to the characteristic of
$\Bbb F$, we have $X=0$.
\end{proof}

Take a $\beta\in\frak{g}(O)=\frak{sl}_n(O)$ such that 
the
characteristic polynomial of $\overline\beta\in M_n(\Bbb F)$ is its
minimal polynomial and is separable. Then the fibers 
$G_{\beta}{\otimes}_OK$ ($K=\Bbb F, F$) are connected and $\beta$ is
smoothly regular with respect to $G$ over $K=F, \Bbb F$ by the remark
in subsubsection \ref{subsubsec:smooth-regularity-for-sl-n}. 
Hence $G_{\beta}$ is commutative and smooth over $O$ by Proposition
\ref{tprop:sufficient-condition-for-smooth-commutativeness-of-g-beta},
and Theorem \ref{th:main-result} is applicable.

Examples of such $\beta\in\frak{g}(O)=\frak{sl}_n(O)$ is given by an 
unramified separable extension $L/F$ of degree $n$. Identify $L$ with a
$F$-subalgebra of $M_n(F)$ by means of the regular representation with
respect to an $O$-basis of $O_L$. Take any $\beta\in L$ such that 
$O_L=O[\beta]$ and $T_{L/F}(\beta)=0$. Then $\beta\in\frak{sl}_n(O)$
and the
reduction modulo $\frak{p}$ of the characteristic polynomial of
$\beta\in M_n(O)$ is irreducible by Proposition
\ref{prop:generator-of-tame-extension-is-smoothly-regular}. 
In this case, we have
$$
 G_{\beta}(O_r)
 =\left\{\varepsilon\nnpmod{\frak{p}_L^{er}}
              \in\left(O_L/\frak{p}_L^{er}\right)^{\times}
                           \mid \varepsilon\in U_{L/F}\right\}
$$
where $e$ is the ramification index of $L/F$ and
$$
 U_{L/F}=G_{\beta}(O)
 =\{\varepsilon\in O_L^{\times}\mid N_{L/F}(\varepsilon)=1\}.
$$
We have also
\begin{align*}
 &G_{\beta}(O_r)\cap K_l(O_r)\\
 &=\left\{1+\varpi^lx\nnpmod{\frak{p}_L^{er}}
           \in\left(O_L/\frak{p}_L^{er}\right)^{\times}
    \mid x\in O_L, 
         T_{L/F}(x)\equiv 0\npmod{\frak{p}^{l^{\prime}}}
          \right\}
\end{align*}
and 
$$
 \psi_{\beta}\left(1+\varpi^lx\nnpmod{\frak{p}_L^{er}}\right)
 =\tau\left(\varpi^{-l^{\prime}}T_{L/F}(x\beta)\right)
$$ 
for $x\in O_L$
 such that $T_{L/F}(x)\equiv 0\npmod{\frak{p}^{l^{\prime}}}$. Then
 Theorem \ref{th:main-result} gives

\begin{thm}\label{th:shintani-gerardin-parametrization-for-sl-n}
Let $G=SL_n$ with $n$ prime to the characteristic of $\Bbb F$. Then 
there exists a bijection 
$\theta\mapsto
 \text{\rm Ind}_{G(O_r,\beta)}^{G(O_r)}\sigma_{\beta,\theta}$ of the
 set 
$$
  \left\{\begin{array}{l}
        \theta:U_{L/F}\to\left(O_L/\frak{p}_L^{er}\right)^{\times}
               \to\Bbb C^{\times}:\text{\rm group homomorphism}\\
   \hphantom{\theta:}
        \text{\rm s.t. 
  $\theta(\gamma)=\tau(\varpi^{-l^{\prime}}T_{L/F}(\beta x))$}\\
   \hphantom{\theta:s.t.}
        \text{\rm for 
   $\forall\gamma\in U_{L/F}$ s.t. 
        $\gamma\equiv 1+\varpi^lx\npmod{\frak{p}_L^{er}},\;x\in O_L$}
         \end{array}\right\}
$$
onto $\text{\rm Irr}(G(O_r)\mid\psi_{\beta})$.
\end{thm}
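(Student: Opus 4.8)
The plan is to deduce this from Theorem \ref{th:main-result} by making the abstract parameter set explicit in terms of $U_{L/F}$.

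\textbf{Step 1: checking the hypotheses.} First I would verify that $G=SL_n$ and the chosen $\beta\in O_L$ satisfy the assumptions of Theorem \ref{th:main-result}. Conditions I), II), III) of subsection \ref{subsec:fundamental-setting} hold by Proposition \ref{prop:sl-n-is-smooth-with-condition-i-ii-iii} (condition I) using that $n$ is prime to the characteristic of $\Bbb F$). Since $L/F$ is unramified separable of degree $n$ and $O_L=O[\beta]$, Proposition \ref{prop:generator-of-tame-extension-is-smoothly-regular} gives that $\chi_{\overline\beta}(t)\npmod{\frak p}$ is irreducible, hence is the minimal polynomial of $\overline\beta\in M_n(\Bbb F)$ and is separable; by subsubsection \ref{subsubsec:smooth-regularity-for-sl-n} the fibres $G_\beta\otimes_OK$ are then connected and $\beta$ is smoothly regular with respect to $SL_n$ over $K=F,\Bbb F$, so $G_\beta$ is commutative and smooth over $O$ by Proposition \ref{tprop:sufficient-condition-for-smooth-commutativeness-of-g-beta}. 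Thus Theorem \ref{th:main-result}, combined with subsection \ref{subsec:clifford-theory-in-general}, already yields a bijection $\theta\mapsto\text{\rm Ind}_{G(O_r,\beta)}^{G(O_r)}\sigma_{\beta,\theta}$ from $\{\theta\in G_\beta(O_r)\sphat \text{ s.t. } \theta=\psi_\beta \text{ on } G_\beta(O_r)\cap K_l(O_r)\}$ onto $\text{\rm Irr}(G(O_r)\mid\psi_\beta)$.

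\textbf{Step 2: making $G_\beta$, $\frak g_\beta$ and $\psi_\beta$ explicit.} Because $\chi_\beta$ is the minimal polynomial of $\beta$, the centralizer of $\beta$ in $M_n$ is $F[\beta]=L$, so $G_\beta=GL_{n,\beta}\cap SL_n$; for an $O$-algebra $R$ one has $GL_{n,\beta}(R)=(O_L\otimes_OR)^\times$ acting on $R^n$ by multiplication, with determinant $N_{L/F}$, whence $G_\beta(O)=U_{L/F}$. By smoothness of $G_\beta$ the reduction $G_\beta(O)\twoheadrightarrow G_\beta(O_r)$ is surjective, identifying $G_\beta(O_r)$ with the image of $U_{L/F}$ in $(O_L/\frak p_L^{er})^\times$. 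Similarly $\frak g_\beta(O)=\{x\in O_L\mid T_{L/F}(x)=0\}$, so by condition II) and smoothness of $G_\beta$, $G_\beta(O_r)\cap K_l(O_r)=\{1+\varpi^lx\npmod{\frak p_L^{er}}\mid x\in O_L,\ T_{L/F}(x)\equiv0\npmod{\frak p^{l'}}\}$; and since the trace form on $M_n(O)$ restricts on $O_L$ to $(x,y)\mapsto T_{L/F}(xy)$, one gets $\psi_\beta(1+\varpi^lx\npmod{\frak p_L^{er}})=\tau(\varpi^{-l'}T_{L/F}(x\beta))$.

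\textbf{Step 3: matching the index sets.} Combining the two descriptions, a character $\theta$ of $G_\beta(O_r)$ is the same as a character of $U_{L/F}$ factoring through its image in $(O_L/\frak p_L^{er})^\times$, and the requirement ``$\theta=\psi_\beta$ on $G_\beta(O_r)\cap K_l(O_r)$'' translates into exactly the stated constraint: for every $\gamma\in U_{L/F}$ with $\gamma\equiv1+\varpi^lx\npmod{\frak p_L^{er}}$, $x\in O_L$, one has $\theta(\gamma)=\tau(\varpi^{-l'}T_{L/F}(\beta x))$. This is precisely the parameter set in the statement, so Theorem \ref{th:main-result} gives the claimed bijection.

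\textbf{Main obstacle.} I expect the bookkeeping in Step 2 to be the delicate point: correctly identifying $G_\beta$, $\frak g_\beta$ and the restricted trace form over each of $O$, $O_r$, $O_{l'}$, verifying that $U_{L/F}=G_\beta(O)\to G_\beta(O_r)$ is surjective (so that characters of $G_\beta(O_r)$ and the indicated characters of $U_{L/F}$ coincide), and checking that the condition defining the parameter set is well posed, i.e.\ independent of the chosen lift $x$ of a given element of $G_\beta(O_r)\cap K_l(O_r)$. All of this is routine once the smoothness and commutativity of $G_\beta$ from Step 1 are available.
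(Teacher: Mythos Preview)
Your proposal is correct and follows essentially the same route as the paper: verify the hypotheses of Theorem \ref{th:main-result} via Proposition \ref{prop:sl-n-is-smooth-with-condition-i-ii-iii}, Proposition \ref{prop:generator-of-tame-extension-is-smoothly-regular}, subsubsection \ref{subsubsec:smooth-regularity-for-sl-n} and Proposition \ref{tprop:sufficient-condition-for-smooth-commutativeness-of-g-beta}, then identify $G_\beta(O_r)$, $G_\beta(O_r)\cap K_l(O_r)$ and $\psi_\beta$ explicitly in terms of $U_{L/F}$ and $T_{L/F}$, and read off the bijection. The paper does exactly this (in the paragraphs preceding the theorem), with the same computations and the same concluding sentence ``Then Theorem \ref{th:main-result} gives''.
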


\subsection[]{Symplectic group}
\label{subsec:tamely-ramified-exension-and-sp}
Let $G=Sp_{2n}$ be the $O$-group scheme such that
$$
 Sp_{2n}(R)=\{g\in GL_{2n}(R)\mid gJ_n\,^t\!g=J_n\}
$$
 ($J_n=\begin{bmatrix}
        0&1_n\\
       -1_n&0
       \end{bmatrix}$) 
for all $O$-algebra $R$. The Lie algebra $\frak{g}=\frak{sp}_{2n}$ of $G$ 
is an affine $O$-subscheme of $\frak{gl}_{2n}$ such that
$$
 \frak{sp}_{2n}(R)=\{X\in\frak{gl}_{2n}(R)\mid XJ_n+J_n\,^t\!X=0\}
$$
for all $O$-algebra $R$. Then 

\begin{prop}\label{prop:sp-2n-is-smooth-with-condition-i-ii-iii}
\begin{enumerate}
\item $G=Sp_{2n}$ is smooth over $O$,
\item the conditions I), II) and III) of subsection 
\ref{subsec:fundamental-setting} hold for $G=Sp_{2n}$.
\end{enumerate}
\end{prop}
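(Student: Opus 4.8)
The plan is to follow the pattern of the proof of Proposition~\ref{prop:sl-n-is-smooth-with-condition-i-ii-iii}, using only that $J_n$ is invertible and alternating with ${}^tJ_n=-J_n$ and $J_n^2=-1_{2n}$, and that $2$ is a unit in every $O$-algebra (since $F$ is non-dyadic). The one device needed repeatedly is the identity
$$
 q(Y)\,J_n\,{}^t(q(Y))=q(Y)\,q(-Y)\,J_n
 \qquad\bigl(Y\in\frak{sp}_{2n}(R),\ q\in O[t]\bigr),
$$
which holds because $Y\in\frak{sp}_{2n}(R)$ means $J_n\,{}^tY=-YJ_n$, whence $J_n\,{}^t(Y^k)=(-Y)^kJ_n$ and $J_n\,{}^t(q(Y))=q(-Y)J_n$.

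For 1), I would verify the infinitesimal lifting criterion \cite[p.111, Cor.4.6]{Demazure-Gabriel1970}. Given an $O$-algebra $R$, an ideal $\frak{a}$ with $\frak{a}^2=0$, and $g\in M_{2n}(R)$ with $g\npmod{\frak{a}}\in Sp_{2n}(R/\frak{a})$, write $gJ_n\,{}^tg=J_n+M$ with $M$ having entries in $\frak{a}$; taking transposes and using ${}^tJ_n=-J_n$ forces ${}^tM=-M$. Seeking a lift of the form $h=(1+N)g$ with $N$ having entries in $\frak{a}$, one gets, modulo products of elements of $\frak{a}$, that $hJ_n\,{}^th=J_n+M+NJ_n+J_n\,{}^tN$, and the choice $N=2^{-1}MJ_n$ makes $NJ_n+J_n\,{}^tN=-M$ (using $J_n^2=-1_{2n}$ and ${}^tM=-M$); hence $h\in Sp_{2n}(R)$ lifts $g$. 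This gives formal smoothness of $G=Sp_{2n}$, and combined with its finite presentation over $O$, smoothness.

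For condition I), note that $X\mapsto J_nX$ is a linear isomorphism of $\frak{sp}_{2n}(\Bbb F)$ onto the space of symmetric matrices. If $X\in\frak{sp}_{2n}(\Bbb F)$ satisfies $\text{\rm tr}(XY)=0$ for all $Y\in\frak{sp}_{2n}(\Bbb F)$, put $S=J_nX$; as $Y$ ranges over $\frak{sp}_{2n}(\Bbb F)$, $T=J_nY$ ranges over all symmetric matrices, and $\text{\rm tr}(XY)=\text{\rm tr}\bigl((J_nSJ_n)T\bigr)$ with $J_nSJ_n$ symmetric. Since the trace form is non-degenerate on the space of symmetric matrices in characteristic $\ne 2$, we conclude $J_nSJ_n=0$, hence $X=0$; so I) holds, and (unlike the $SL_n$ case) no coprimality hypothesis is needed.

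For conditions II) and III), apply the displayed identity. With $Y=\varpi^lX$, $X\in\frak{sp}_{2n}(O)$, and $q(t)=1+t$, one gets $q(Y)q(-Y)=1-\varpi^{2l}X^2\equiv 1\npmod{\frak{p}^r}$ because $2l\ge r$, so $1+\varpi^lX\npmod{\frak{p}^r}\in Sp_{2n}(O_r)\cap K_l(O_r)$; the resulting map on $\frak{g}(O_{l^{\prime}})$ is a homomorphism (the $\varpi^{2l}$ cross term vanishes) and manifestly injective, and it is onto $K_l(O_r)$ since any element of $K_l(O_r)$ lifts to $1+\varpi^lZ$ with $Z\in M_{2n}(O)$ whose reduction modulo $\frak{p}^{l^{\prime}}$ lies in $\frak{sp}_{2n}(O_{l^{\prime}})$ (again using $2l\ge r$), and $\frak{sp}_{2n}(O)\to\frak{sp}_{2n}(O_{l^{\prime}})$ is surjective. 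With $Y=\varpi^{l-1}X$ and $q(t)=1+t+2^{-1}t^2$ one gets $q(Y)q(-Y)=(1+2^{-1}Y^2)^2-Y^2=1+4^{-1}\varpi^{4l-4}X^4\equiv 1\npmod{\frak{p}^r}$ since $4l-4\ge 2l-1=r$ for $l\ge 2$, which together with the evident congruence modulo $\frak{p}^{l-1}$ provides the asserted map $\frak{g}(O)\to K_{l-1}(O_r)$ of III). I expect the only step requiring genuine care to be the smoothness lift (recognizing ${}^tM=-M$ and hitting on the correction $N=2^{-1}MJ_n$); given the displayed polynomial identity, the verifications of I), II) and III) are routine $\varpi$-adic bookkeeping, essentially identical to the $SL_n$ argument.
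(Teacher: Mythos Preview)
Your proof is correct and follows essentially the same route as the paper's. The smoothness lift is identical up to notation (the paper writes $gJ_n\,{}^tg=(1_{2n}+X)J_n$ and corrects by $(1_{2n}+2^{-1}X)^{-1}$, which is exactly your $1+N$ with $N=2^{-1}MJ_n$ once one unwinds $M=XJ_n$ and uses $X^2=0$). For I) the paper uses the averaging map $T\mapsto T+J_n\,{}^tTJ_n$ into $\frak{sp}_{2n}$ to reduce to non-degeneracy of the trace form on all of $M_{2n}$, whereas you pass to symmetric matrices; both are one-line reductions. Your packaging of II) and III) via the single identity $q(Y)J_n\,{}^t(q(Y))=q(Y)q(-Y)J_n$ is a little cleaner than the paper's direct expansions (and yields the sharper exponent $4l-4\ge r$ rather than the paper's $3l-3\ge r$), but the content is the same.
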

\begin{proof}
1) Take any $O$-algebra $R$ and an ideal $\frak{a}\subset R$ such that
$\frak{a}^2=0$. For any $g\npmod{\frak{a}}\in G(R/\frak{a})$ 
($g\in M_{2n}(R)$), we have 
$gJ_n\,^tg=(1_{2n}+X)J_n$ with $X\in M_{2n}(\frak{a})$. Then 
$(1_{2n}+2^{-1}X)^2=1_{2n}+X$ because $X^2=0$. Then we have
$$
 gJ_n\,^tg=(1_{2n}+2^{-1}X)J_n\,^t(1_{2n}+2^{-1}X)
$$
because $XJ_n=J_n\,^tX$. We have also 
$\det(1_{2n}+2^{-1}X)\in R^{\times}$ because 
$\det(1_{2n}+2^{-1}X)\equiv 1\npmod{\frak a}$. Now 
$$
 h=(1_{2n}+2^{-1}X)^{-1}g\in Sp_{2n}(R).
$$
and $g\equiv h\npmod{\frak a}$. Hence the canonical mapping 
$G(R)\to G(R/\frak{a})$ is surjective, which means that 
$G=Sp_{2n}$ is smooth over $O$ 
(see \cite[p.111, Cor. 4.6]{Demazure-Gabriel1970}).

2) For any $T\in M_{2n}(\Bbb F)$, we have
$Y=T+J_n\,^tTJ_n\in\frak{g}(\Bbb F)$ and 
$$
 \text{\rm tr}(XY)=2\text{\rm tr}(XT)
$$
for all $X\in\frak{g}(\Bbb F)$. Hence the condition I) holds.

For any $X\in M_{2n}(O)$, we have 
$$
 (1_{2n}+\varpi^lX)J_n\,^t(1_{2n}+\varpi^lX)
 \equiv 1_{2n}+\varpi^l(XJ_n+J_n\,^tX)\npmod{\frak{p}^r}
$$
which implies that the condition II) holds.

Take any $X\in\frak{g}(O)$. Then we have
$$
 (1_{2n}+\varpi^{l-1}X+2^{-1}\varpi^{2l-2}X^2)J_n\,
 ^t(1_{2n}+\varpi^{l-1}X+2^{-1}\varpi^{2l-2}X^2)
 \equiv J_n\npmod{\frak{p}^r}
$$
because $3l-3\geq r$. So the condition III) holds.
\end{proof}

Take a $\beta\in\frak{g}(O)$ such that the characteristic polynomial of
$\overline\beta\in M_{2n}(\Bbb F)$ is its minimal polynomial and 
$\det\overline\beta\neq 0$. Then the fibers 
$G_{\beta}{\otimes}_OK$ ($K=\Bbb F, F$) are connected and $\beta$ is
smoothly regular with respect to $G$ over $K=F, \Bbb F$ by the remark
in subsubsection \ref{subsubsec:smooth-regularity-for-sp-2n}. 
Hence $G_{\beta}$ is commutative and smooth over $O$ by Proposition
\ref{tprop:sufficient-condition-for-smooth-commutativeness-of-g-beta},
and Theorem \ref{th:main-result} is applicable.

Proposition \ref{prop:generator-of-tame-extension-is-smoothly-regular}
gives examples of such $\beta\in\frak{g}(O)$. 

Let $L_+/F$ be a tamely ramified extension of degree $n$ and $L/L_+$ a
quadratic extension. Take a $\omega\in O_L$ such that 
$$
 O_L=O_{L_+}\oplus\omega O_{L_+},
 \qquad
 \omega^{\rho}=-\omega
$$
where $\rho\in\text{\rm Gal}(L/L_+)$ is the non-trivial element. Then
$$
 D(x,y)
 =\frac 12T_{L/F}\left(\omega^{-1}\varpi_{L_+}^{1-e_+}x^{\rho}y\right)
 \qquad
 (x,y\in L)
$$
with the ramification index $e_+$ of $L_+/F$ and a prime element
$\varpi_{L_+}$ of $L_+$ 
is a symplectic form on the $F$-vector space $L$. Fix an $O$-basis 
$\{u_1,\cdots,u_n\}$ of $O_{L_+}$. Since $L_+/F$ is a tamely ramified
extension, there exists $u_j^{\ast}\in\frak{p}_{L_+}^{1-e_+}$ 
($1\leq j\leq n$) such that $T_{L_+/F}(u_iu_j^{\ast})=\delta_{ij}$. If
we put $v_j=\omega\cdot\varpi_{L_+}^{e_+-1}\cdot u_j^{\ast}\in O_L$,
then we have
$$
 D(u_i,u_j)=D(v_i,v_j)=0,
 \quad
 D(u_i,v_j)=\delta_{ij}
 \quad
 (1\leq i,j\neq n).
$$
Identify the $F$-algebra $L$ with a $F$-subalgebra of $M_{2n}(F)$ by
means of the $O$-basis $\{u_1,\cdots,u_n,v_1,\cdots,v_n\}$ of $O_L$. 

Take a $\beta\in O_L$ such that $O_L=O[\beta]$ and
$\beta^{\rho}+\beta=0$. Then 
\begin{enumerate}
\item $\beta\in\frak{g}(O)$ and the characteristic polynomial of 
      $\overline\beta\in M_{2n}(\Bbb F)$ is its minimal polynomial,
  and
\item $\det\overline\beta\neq 0$ for 
      $\overline\beta\in M_{2n}(\Bbb F)$ 
      if and only if $L/F$ is not totally ramified
\end{enumerate}
by Proposition
\ref{prop:generator-of-tame-extension-is-smoothly-regular}. 
We have
$$
 G_{\beta}(O_r)
 =\left\{\varepsilon\nnpmod{\frak{p}_L^{er}}
           \in\left(O_L/\frak{p}_L^{er}\right)^{\times}
   \mid\varepsilon\in U_{L/L_+}\right\}
$$
where $e$ is the ramification index of $L/F$ and
$$
 U_{L/L_+}=G_{\beta}(O)
 =\{\varepsilon\in O_L^{\times}\mid N_{L/L_+}(\varepsilon)=1\}.
$$
We have also
\begin{align*}
 &G_{\beta}(O_r)\cap K_l(O_r)\\
 &=\left\{1+\varpi^lx\nnpmod{\frak{p}_L^{er}}
                \in\left(O_L/\frak{p}_L^{er}\right)^{\times}
    \mid x\in O_L, 
         T_{L/L_+}(x)\equiv 0\npmod{\frak{p}_{L_+}^{e_+l^{\prime}}}
          \right\}
\end{align*}
and 
$$
 \psi_{\beta}\left(1+\varpi^lx\nnpmod{\frak{p}_L^{er}}\right)
 =\tau\left(\varpi^{-l^{\prime}}T_{L/F}(\beta x)\right)
$$
for $x\in O_L$ such that 
$T_{L/L_+}(x)\equiv 0\npmod{\frak{p}_{L_+}^{e_+l^{\prime}}}$. 
Then Theorem \ref{th:main-result} gives

\begin{thm}\label{thm:shintani-gerardin-type-parametrization-for-sp-2n}
Assume that $L/F$ is not totally ramified. Then 
there exists a bijection 
$\theta\mapsto
 \text{\rm Ind}_{G(O_r,\beta)}^{G(O_r)}\sigma_{\beta,\theta}$ of the
 set 
$$
 \left\{\begin{array}{l}
        \theta:U_{L/L_+}\to\left(O_L/\frak{p}_L^{er}\right)^{\times}
               \to\Bbb C^{\times}:\text{\rm group homomorphism}\\
   \hphantom{\theta:}
        \text{\rm s.t. 
  $\theta(\gamma)=\tau(\varpi^{-l^{\prime}}T_{L/F}(\beta x))$}\\
   \hphantom{\theta:s.t.}
        \text{\rm for 
   $\forall\gamma\in U_{L/L_+},
          \gamma\equiv 1+\varpi^lx\npmod{\frak{p}_L^{er}},\;x\in O_L$}
         \end{array}\right\}
$$
onto $\text{\rm Irr}(G(O_r)\mid\psi_{\beta})$.
\end{thm}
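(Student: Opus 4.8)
The plan is to obtain the theorem as a direct application of Theorem~\ref{th:main-result} to $G=Sp_{2n}$ and the chosen $\beta\in O_L$, followed by a translation of the abstract parameter set of that theorem into the language of the extension $L/L_+$. First I would check that the hypotheses of Theorem~\ref{th:main-result} are met. By Proposition~\ref{prop:sp-2n-is-smooth-with-condition-i-ii-iii} the group $Sp_{2n}$ is smooth over $O$ and satisfies conditions I), II), III) of subsection~\ref{subsec:fundamental-setting}. For $\beta\in O_L$ with $O_L=O[\beta]$ and $\beta^{\rho}+\beta=0$, Proposition~\ref{prop:generator-of-tame-extension-is-smoothly-regular} shows that the characteristic polynomial of $\overline\beta\in M_{2n}(\Bbb F)$ is its minimal polynomial, which is the second hypothesis of Theorem~\ref{th:main-result}; the same proposition, together with the assumption that $L/F$ is not totally ramified, gives $\det\overline\beta\neq 0$, so by subsubsection~\ref{subsubsec:smooth-regularity-for-sp-2n} the element $\beta$ is smoothly regular with respect to $Sp_{2n}$ over $F$ and over $\Bbb F$, and $Sp_{2n,\beta}{\otimes}_OK$ is connected for $K=F,\Bbb F$. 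Proposition~\ref{tprop:sufficient-condition-for-smooth-commutativeness-of-g-beta} then gives that $G_{\beta}$ is commutative and smooth over $O$, which is the first hypothesis. Hence Theorem~\ref{th:main-result} provides a bijection $\theta\mapsto\sigma_{\beta,\theta}$ of $\{\theta\in G_{\beta}(O_r)\sphat\mid\theta=\psi_{\beta}\text{ on }G_{\beta}(O_r)\cap K_l(O_r)\}$ onto $\text{\rm Irr}(G(O_r,\beta)\mid\psi_{\beta})$, and composing with the bijection in part~(2) of subsection~\ref{subsec:clifford-theory-in-general} gives the bijection $\theta\mapsto\text{\rm Ind}_{G(O_r,\beta)}^{G(O_r)}\sigma_{\beta,\theta}$ onto $\text{\rm Irr}(G(O_r)\mid\psi_{\beta})$.

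Next I would identify the parameter set with the one displayed in the statement, using the concrete description of $G_{\beta}$ recorded in subsection~\ref{subsec:tamely-ramified-exension-and-sp}. The point is that, since $D$ is defined so that $D(xu,v)=D(u,x^{\rho}v)$ for every $x\in L$, the symplectic adjoint attached to the Gram matrix $J_n$ of $D$ in the basis $\{u_1,\dots,u_n,v_1,\dots,v_n\}$ restricts on $L\subset M_{2n}(F)$ to the nontrivial Galois automorphism $x\mapsto x^{\rho}$; combined with $O_L=O[\beta]$ and the identification $O_l[\beta_l]\cong O_L/\frak p_L^{el}$ from subsection~\ref{subsec:detailed-description-of-the-case-gl-n}, this identifies $G_{\beta}(O_r)$ with the image of $U_{L/L_+}$ in $(O_L/\frak p_L^{er})^{\times}$, as stated there. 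Consequently a character of $G_{\beta}(O_r)$ is the same as a group homomorphism $U_{L/L_+}\to(O_L/\frak p_L^{er})^{\times}\to\Bbb C^{\times}$, which is the first two lines of the displayed set, and the constraint ``$\theta=\psi_{\beta}$ on $G_{\beta}(O_r)\cap K_l(O_r)$'' becomes, by the formulas for $G_{\beta}(O_r)\cap K_l(O_r)$ and for $\psi_{\beta}$ already given in subsection~\ref{subsec:tamely-ramified-exension-and-sp}, exactly ``$\theta(\gamma)=\tau(\varpi^{-l'}T_{L/F}(\beta x))$ whenever $\gamma\in U_{L/L_+}$ with $\gamma\equiv 1+\varpi^l x\pmod{\frak p_L^{er}}$, $x\in O_L$''. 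Putting these two steps together yields the asserted bijection.

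The calculations needed along the way are elementary and I would only indicate them: the identity $D(xu,v)=D(u,x^{\rho}v)$ is immediate from the defining formula $D(x,y)=\frac12 T_{L/F}(\omega^{-1}\varpi_{L_+}^{1-e_+}x^{\rho}y)$ and commutativity of $L$; the trace condition $T_{L/L_+}(x)\equiv 0\pmod{\frak p_{L_+}^{e_+l'}}$ in the description of $G_{\beta}(O_r)\cap K_l(O_r)$ need not be imposed on the parameter $\gamma$, since expanding $N_{L/L_+}(1+\varpi^l x)=1+\varpi^l T_{L/L_+}(x)+\varpi^{2l}N_{L/L_+}(x)$ modulo $\frak p_{L_+}^{e_+r}$ and using $l+l'=r\le 2l$ forces it from $N_{L/L_+}(\gamma)=1$; and every element of $G_{\beta}(O_r)\cap K_l(O_r)$ is represented by a genuine norm-one unit $\gamma\in U_{L/L_+}$ because $G_{\beta}$ is smooth over $O$. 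The one step that requires a little care, and that I regard as the crux, is verifying that the symplectic adjoint in the basis $\{u_1,\dots,u_n,v_1,\dots,v_n\}$ really is $x\mapsto x^{\rho}$ on $L$: this is exactly where the particular choices of $\omega$ (with $\omega^{\rho}=-\omega$), of the dual basis $u_j^{\ast}\in\frak p_{L_+}^{1-e_+}$, and of $v_j=\omega\,\varpi_{L_+}^{e_+-1}u_j^{\ast}$ are used, so that $D(u_i,v_j)=\delta_{ij}$ and $D(u_i,u_j)=D(v_i,v_j)=0$. Once that identification of $G_{\beta}$ with the norm-one torus of $L/L_+$ is in place, the rest is bookkeeping.
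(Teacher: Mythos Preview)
Your proposal is correct and follows essentially the same approach as the paper: verify the hypotheses of Theorem~\ref{th:main-result} for $G=Sp_{2n}$ and the given $\beta$, invoke that theorem, and then translate the abstract parameter set into the field-theoretic description via the explicit formulas for $G_{\beta}(O_r)$, $G_{\beta}(O_r)\cap K_l(O_r)$, and $\psi_{\beta}$ recorded in subsection~\ref{subsec:tamely-ramified-exension-and-sp}. The paper itself simply writes ``Then Theorem~\ref{th:main-result} gives'' after listing those formulas; your added explanations (that the symplectic adjoint restricts to $x\mapsto x^{\rho}$ on $L$, and that the trace condition on $x$ is automatic from $N_{L/L_+}(\gamma)=1$) are correct elaborations of points the paper leaves implicit.
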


\subsection[]{Orthogonal group}
\label{subsec:tamely-ramified-exension-and-so}
Take a $S\in M_m(O)$ such that $^tS=S$ and $\det S\in O^{\times}$. 
Let $G=SO(S)$ be the $O$-group subscheme of $SL_m$ such that 
$$
 G(R)=\{g\in SL_m(R)\mid gS^t\!g=S\}
$$
for all $O$-algebra $R$. The Lie algebra $\frak{g}=\frak{so}(S)$ of
$G$ is an affine $O$-subscheme of $\frak{gl}_m$ such that
$$
 \frak{g}(R)=\{X\in\frak{gl}_m(R)\mid XS+S\,^t\!X=0\}
$$
for all $O$-algebra $R$. Then 

\begin{prop}\label{prop:so(s)-is-smooth-with-condition-i-ii-iii}
\begin{enumerate}
\item $G=SO(S)$ is smooth over $O$,
\item the conditions I), II) and III) of subsection 
      \ref{subsec:fundamental-setting} hold for $G=SO(S)$.
\end{enumerate}
\end{prop}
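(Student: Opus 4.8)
The plan is to imitate step by step the arguments already given for $SL_n$ in Proposition \ref{prop:sl-n-is-smooth-with-condition-i-ii-iii} and for $Sp_{2n}$ in Proposition \ref{prop:sp-2n-is-smooth-with-condition-i-ii-iii}, replacing the alternating form $J_n$ by the symmetric form $S$, and taking care of the determinant-one condition. For smoothness I would verify the infinitesimal lifting criterion of \cite[p.111, Cor. 4.6]{Demazure-Gabriel1970}. Given an $O$-algebra $R$, an ideal $\frak a\subset R$ with $\frak a^2=0$, and a lift $g\in M_m(R)$ of a point of $G(R/\frak a)$, we have $gS\,{}^{t}g=S+X$ with $X\in M_m(\frak a)$ symmetric, since $S$ and $gS\,{}^{t}g$ are symmetric. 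As $2\in O^{\times}$ the matrix $u=1_m+2^{-1}XS^{-1}$ is invertible (it is $\equiv 1_m$ modulo $\frak a$) and satisfies $uS\,{}^{t}u=S+X$, the quadratic term $4^{-1}XS^{-1}X$ vanishing because $\frak a^2=0$; hence $h=u^{-1}g$ reduces to $g$ modulo $\frak a$ and satisfies $hS\,{}^{t}h=S$. Then $(\det h)^2=1$ while $\det h\equiv 1\pmod{\frak a}$, and since $2\in R^{\times}$ we cannot have $-1\equiv 1$ in $R/\frak a$ unless $R/\frak a=0$; so $\det h=1$ and $h\in G(R)$.

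For condition I) I would use the involution $\theta(X)=-S\,{}^{t}XS^{-1}$ of $\frak{gl}_m(\Bbb F)$, whose $(+1)$-eigenspace is exactly $\frak g(\Bbb F)=\frak{so}(S)(\Bbb F)$; since $\Bbb F$ has odd characteristic this gives a decomposition $\frak{gl}_m(\Bbb F)=\frak g(\Bbb F)\oplus\frak g(\Bbb F)^{-}$ into the $\pm 1$-eigenspaces. A short cyclic-trace manipulation gives $\text{\rm tr}(\theta(X)\theta(Y))=\text{\rm tr}(XY)$, so $B$ is $\theta$-invariant, whence $\frak g(\Bbb F)$ and $\frak g(\Bbb F)^{-}$ are $B$-orthogonal; as $B$ is non-degenerate on $\frak{gl}_m(\Bbb F)$, its restriction to each summand, in particular to $\frak g(\Bbb F)$, is non-degenerate.

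For conditions II) and III) I would run the same computations as in the proof of Proposition \ref{prop:sp-2n-is-smooth-with-condition-i-ii-iii}, using that $X\in\frak g(R)$ means $S\,{}^{t}XS^{-1}=-X$, so that $S\,{}^{t}(\,\cdot\,)S^{-1}$ sends a polynomial $u(X)$ in $X$ to $u(-X)$. For II), $(1_m+\varpi^{l}X)S\,{}^{t}(1_m+\varpi^{l}X)\equiv S+\varpi^{l}(XS+S\,{}^{t}X)\pmod{\frak p^{r}}$ because $2l\geq r$, so the element preserves $S$ modulo $\frak p^{r}$ iff $XS+S\,{}^{t}X\equiv 0\pmod{\frak p^{l^{\prime}}}$, and its determinant in $O_r$ is then automatically $1$. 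For III), with $u=1_m+\varpi^{l-1}X+2^{-1}\varpi^{2l-2}X^{2}$ and $X\in\frak g(O)$, one has $S\,{}^{t}u\,S^{-1}=1_m-\varpi^{l-1}X+2^{-1}\varpi^{2l-2}X^{2}$ and $u\cdot(1_m-\varpi^{l-1}X+2^{-1}\varpi^{2l-2}X^{2})\equiv 1_m\pmod{\frak p^{r}}$, the only surviving term being $4^{-1}\varpi^{4l-4}X^{4}$ with $4l-4\geq 2l-1=r$; hence $uS\,{}^{t}u\equiv S\pmod{\frak p^{r}}$ and $\det u=1$ in $O_r$, so III) holds.

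The computations themselves are routine truncations; the only point requiring attention throughout is the distinction between $O(S)$ and $SO(S)$. In each step the relevant matrix is congruent to $1_m$ modulo a proper ideal in which $2$ is a unit, so that $\det=\pm 1$ together with $\det\equiv 1$ forces $\det=1$; thus the extra defining equation $\det=1$ of $SO(S)\subset O(S)$ imposes nothing new, and I expect no genuine obstacle beyond this bookkeeping.
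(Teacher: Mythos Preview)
Your proof is correct and follows essentially the same route as the paper: the smoothness argument via the infinitesimal lifting criterion with correction factor $1_m+2^{-1}XS^{-1}$ matches the paper's $(1_m+2^{-1}X)^{-1}$ up to the change of variable $X\leftrightarrow XS$, your involution argument for condition I) is the conceptual restatement of the paper's direct formula $Y=T-S\,{}^{t}TS^{-1}\in\frak g(\Bbb F)$ with $\text{tr}(XY)=2\,\text{tr}(XT)$, and for II) and III) both you and the paper simply rerun the $Sp_{2n}$ computation with $J_n$ replaced by $S$. Your handling of the determinant-one condition (ruling out $\det h=-1$ via $2\in R^{\times}$ and $\det h\equiv 1$ modulo a square-zero ideal) is likewise exactly what the paper does.
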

\begin{proof}
1) Take any $O$-algebra $R$ and an ideal $\frak{a}\subset R$ such that
$\frak{a}^2=0$. For any $g\npmod{\frak a}\in G(R/\frak{a})$, we have 
$gS\,^tg=(1_m+X)S$ with $X\in M_m(\frak{a})$. Then, as in the proof of
Proposition \ref{prop:sp-2n-is-smooth-with-condition-i-ii-iii}, we
have $\det(1_m+2^{-1}X)\in R^{\times}$ and put 
$h=(1_m+2^{-1}X)^{-1}g\in GL_m(R)$. Then we have 
$hS\,^th=S$ and $h\equiv g\npmod{\frak a}$. If $\det h=-1$, then
$2\in\frak{a}$ because $\det h\equiv\det g\equiv 1\npmod{\frak a}$,
this means $\frak{a}=R$ since $F$ is non-dyadic. Hence the canonical
mapping $G(R)\to G(R/\frak{a})$ is surjective, which means that 
$G=SO(S)$ is smooth over $O$ 
(see \cite[p.111, Cor. 4.6]{Demazure-Gabriel1970}).

2) For any $T\in M_{2n}(\Bbb F)$, we have
$Y=T-S\,^tTS^{-1}\in\frak{g}(\Bbb F)$ and 
$$
 \text{\rm tr}(XY)=2\text{\rm tr}(XT)
$$
for all $X\in\frak{g}(\Bbb F)$. Hence the condition I) holds. 
Similar arguments as in the proof of Proposition 
\ref{prop:sp-2n-is-smooth-with-condition-i-ii-iii} show that the
conditions II) and III) hold for $G=SO(S)$.
\end{proof}

\subsubsection[]{Case of even variables}
\label{subsubsec:tamely-ramified-extension-and-so-2n}
Let us consider the case of $m=2n$ being even. 

Take a $\beta\in\frak{g}(O)$ such that the characteristic polynomial
of $\overline\beta\in M_m(\Bbb F)$ is its minimal polynomial and 
$\det\overline\beta\neq 0$. Then 
$\beta\in\frak{g}(O)$ is smoothly regular with respect to $G=SO(S)$
over $O$ by the remark in subsubsection 
\ref{subsubsec:smooth-regularity-for-so-2n}, and the fibers 
$G_{\beta}{\otimes}_OK$ ($K=\Bbb F, F$) are connected. So 
$G_{\beta}$ is smooth over $O$ and Theorem \ref{th:main-result} is
applicable. 

Proposition \ref{prop:generator-of-tame-extension-is-smoothly-regular}
gives examples of such $\beta\in\frak{g}(O)$. 

Let $L/F$ be a tamely ramified Galois
extension of degree $2n$. Fix an intermediate field 
$F\subset L_+\subset L$ such that $(L:L_+)=2$, 
and assume that $L/L_+$ is unramified. Take an  
$\varepsilon\in O_{L_+}^{\times}$ and put
$$
 S_{\varepsilon}(x,y)
 =T_{L/F}\left(\varepsilon\cdot\varpi_{L_+}^{1-e}\cdot xy^{\rho}\right)
 \qquad
 (x,y\in L)
$$
where $\rho\in\text{\rm Gal}(L/L_+)$ is the non-trivial element, 
$e$ is the ramification index of $L/F$ and $\varpi_{L_+}$ is a prime
element of $L_+$. 
Then $S_{\varepsilon}$ is a regular $F$-quadratic form on $L$. Take a
$O$-basis $\{u_1,\cdots,u_{2n}\}$ of $O_L$ and put 
$B=\left(u_i^{\sigma_j}\right)_{1\leq i,j\leq 2n}$ with
$\text{\rm Gal}(L/F)=\{\sigma_1,\cdots,\sigma_{2n}\}$. Then we have
$$
 \left(S_{\varepsilon}(u_i,u_j)\right)_{1\leq i,j\leq 2n}
 =B\begin{bmatrix}
    (\varepsilon\varpi_{L_+}^{1-e})^{\sigma_1}&  &  \\
                                    &\ddots&  \\
       &  &(\varepsilon\varpi_{L_+}^{1-e})^{\sigma_{2n}}
   \end{bmatrix}
  \,^tB^{\rho}
$$
so that the discriminant of the quadratic form $S_{\varepsilon}$ is
$$
 \det\left(S_{\varepsilon}(u_i,u_j)\right)_{1\leq i,j\leq 2n}
 =\pm(\det B)^2N_{L/F}\left(\varepsilon\varpi_{L_+}^{1-e}\right).
$$
Note that $(\det B)^{\sigma}=\pm\det B$ for any 
$\sigma\in\text{\rm Gal}(L/F)$. Since $L/F$ is tamely ramified, its
discriminant is 
$$
 D(L/F)=(\det B^2)=\frak{p}^{f(e-1)}
$$
where $2n=ef$. Hence 
$\det\left(S_{\varepsilon}(u_i,u_j)\right)_{1\leq i,j\leq 2n}
 \in O^{\times}$. So
the $O$-group scheme $G=SO(S_{\varepsilon})$ and its Lie algebra
$\frak{g}=\frak{so}(S_{\varepsilon})$ is 
defined by 
$$
 G(R)=\left\{g\in SL_R(O_L{\otimes}R)\biggm|
                    \begin{array}{l}
                    S_{\varepsilon}(xg,yg)=S_{\varepsilon}(x,y)\\
                    \text{\rm for $\forall x,y\in O_L{\otimes}_OR$}
                    \end{array}\right\}
$$
and by 
$$
 \frak{g}(R)
  =\left\{X\in\text{\rm End}_R(O_L{\otimes}R)\biggm|
           \begin{array}{l}
            S_{\varepsilon}(xX,y)+S_{\varepsilon}(x,yX)=0\\
            \text{\rm for $\forall x,y\in O_L{\otimes}_OR$}
           \end{array}\right\}
$$
for all $O$-algebra $R$. Note that $\text{\rm End}_F(L)$
acts on $L$ from the right side.

Take a $\beta\in O_L$ such that $O_L=O[\beta]$ and 
$\beta^{\rho}+\beta=0$. Since $L/F$ is not totally ramified, 
Proposition \ref{prop:generator-of-tame-extension-is-smoothly-regular}
implies that $\beta$ is an unit of $O_L$. 
Identify $\beta\in L$ with the element 
$x\mapsto x\beta$ of $\frak{g}(O)\subset\text{\rm End}_O(O_L)$. Then
we have
$$
 G_{\beta}(O_r)
 =\left\{\varepsilon\nnpmod{\frak{p}_L^{er}}
           \in\left(O_L/\frak{p}_L^{er}\right)^{\times}
    \mid\varepsilon\in U_{L/L_+}\right\}
$$
where $e$ is the ramification index of $L/F$ and
$$
 U_{L/L_+}=G_{\beta}(O)
 =\{\varepsilon\in O_L^{\times}\mid N_{L/L_+}(\varepsilon)=1\}.
$$
We have also
\begin{align*}
 &G_{\beta}(O_r)\cap K_l(O_r)\\
 &=\left\{1+\varpi^lx\nnpmod{\frak{p}_L^{er}}
           \in\left(O_L/\frak{p}_L^{er}\right)^{\times}
    \mid x\in O_L, 
     T_{L/L_+}(x)\equiv 0\npmod{\frak{p}_{L_+}^{el^{\prime}}}
         \right\}
\end{align*}
and 
$$
 \psi_{\beta}\left(1+\varpi^lx\nnpmod{\frak{p}_L^{er}}\right)
 =\tau\left(\varpi^{-l^{\prime}}T_{L/F}(\beta x)\right)
$$
for $x\in O_L$ such that 
$T_{L/L_+}(x)\equiv 0\npmod{\frak{p}_{L_+}^{e_+l^{\prime}}}$. 
Then Theorem \ref{th:main-result} gives

\begin{thm}\label{thm:shintani-gerardin-type-parametrization-for-even-so}
There exists a bijection 
$\theta\mapsto
 \text{\rm Ind}_{G(O_r,\beta)}^{G(O_r)}\sigma_{\beta,\theta}$ of the
 set 
$$
 \left\{\begin{array}{l}
        \theta:U_{L/L_+}\to\left(O_L/\frak{p}_L^{er}\right)^{\times}
               \to\Bbb C^{\times}:\text{\rm group homomorphism}\\
   \hphantom{\theta:}
        \text{\rm s.t. 
  $\theta(\gamma)=\tau(\varpi^{-l^{\prime}}T_{L/F}(\beta x))$}\\
   \hphantom{\theta:s.t.}
        \text{\rm for
    $\forall\gamma\in U_{L/L_+},
          \gamma\equiv 1+\varpi^lx\npmod{\frak{p}_L^{er}},\;x\in O_L$}
         \end{array}\right\}
$$
onto $\text{\rm Irr}(G(O_r)\mid\psi_{\beta})$.
\end{thm}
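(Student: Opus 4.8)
The plan is to obtain the theorem by specialising Theorem~\ref{th:main-result} to $G=SO(S_{\varepsilon})$ and the chosen $\beta$, and then to rewrite the resulting parameter set in the concrete terms of the extension $L/L_+$. Concretely the argument splits into three tasks: (i) check that $G$ and $\beta$ meet the hypotheses of Theorem~\ref{th:main-result}; (ii) identify $\{\theta\in G_{\beta}(O_r)\sphat\mid\theta=\psi_{\beta}\text{ on }G_{\beta}(O_r)\cap K_l(O_r)\}$ with the set of characters of $U_{L/L_+}$ displayed in the statement; and (iii) pass from $\text{\rm Irr}(G(O_r,\beta)\mid\psi_{\beta})$ to $\text{\rm Irr}(G(O_r)\mid\psi_{\beta})$ through the Clifford-theoretic induction bijection of subsection~\ref{subsec:clifford-theory-in-general}.

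For task (i): Proposition~\ref{prop:so(s)-is-smooth-with-condition-i-ii-iii} already gives that $G=SO(S_{\varepsilon})$ is smooth over $O$ and that conditions I), II), III) of subsection~\ref{subsec:fundamental-setting} hold. For $\beta\in O_L$ with $O_L=O[\beta]$ and $\beta^{\rho}+\beta=0$, Proposition~\ref{prop:generator-of-tame-extension-is-smoothly-regular} shows that the characteristic polynomial of $\overline\beta\in M_{2n}(\Bbb F)$ is its minimal polynomial, and, $L/F$ being not totally ramified, that $\det\overline\beta\neq 0$; hence by subsubsection~\ref{subsubsec:tamely-ramified-extension-and-so-2n} and Proposition~\ref{tprop:sufficient-condition-for-smooth-commutativeness-of-g-beta}, $\beta$ is smoothly regular with respect to $G$ and $G_{\beta}$ is commutative and smooth over $O$. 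So Theorem~\ref{th:main-result} applies and furnishes a bijection $\theta\mapsto\sigma_{\beta,\theta}$ of $\{\theta\in G_{\beta}(O_r)\sphat\mid\theta=\psi_{\beta}\text{ on }G_{\beta}(O_r)\cap K_l(O_r)\}$ onto $\text{\rm Irr}(G(O_r,\beta)\mid\psi_{\beta})$.

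For task (ii): since $O_L=O[\beta]$, for every $O$-algebra $R$ the $R$-linear endomorphisms of $O_L\otimes_O R$ commuting with right multiplication by $\beta$ are exactly right multiplications by elements $\gamma\in O_L\otimes_O R$, and such an operator lies in $G(R)=SO(S_{\varepsilon})(R)$ precisely when $\gamma\gamma^{\rho}=1$, i.e.\ $N_{L/L_+}(\gamma)=1$ (the determinant condition being automatic, as $N_{L/F}(\gamma)=N_{L_+/F}(N_{L/L_+}(\gamma))=1$). This identifies $G_{\beta}$ with the norm-one torus $U_{L/L_+}$ and yields the description of $G_{\beta}(O_r)$ recorded before the statement. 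To compute $G_{\beta}(O_r)\cap K_l(O_r)$, expand $N_{L/L_+}(1+\varpi^l x)=1+\varpi^l T_{L/L_+}(x)+\varpi^{2l}N_{L/L_+}(x)$; since $2l\geq r$ the last term vanishes modulo $\frak{p}_L^{er}$, and one reads off (uniformly in the parity of $r$, with $r-l=l^{\prime}$) that $1+\varpi^l x\npmod{\frak{p}_L^{er}}$ lies in $G_{\beta}(O_r)$ iff $T_{L/L_+}(x)\equiv 0\npmod{\frak{p}_{L_+}^{el^{\prime}}}$. This congruence forces $x$ to be anti-invariant modulo $\frak{p}^{l^{\prime}}$, so the element already lies in $K_l(O_r)$. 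Finally, the restriction of $\psi_{\beta}$ to this element is $\tau(\varpi^{-l^{\prime}}B(X,\beta))$, where $X\in\frak{so}(S_{\varepsilon})(O)$ is right multiplication by $x_-=\tfrac12(x-x^{\rho})$; since $B(X,\beta)$ is the trace of right multiplication by $x_-\beta$, namely $T_{L/F}(x_-\beta)$, and $T_{L/F}(\tfrac12(x+x^{\rho})\beta)=0$ (as $\beta^{\rho}=-\beta$), this equals $\tau(\varpi^{-l^{\prime}}T_{L/F}(\beta x))$. Hence the condition ``$\theta=\psi_{\beta}$ on $G_{\beta}(O_r)\cap K_l(O_r)$'' is exactly the ``s.t.'' clause in the statement, and the two parameter sets agree.

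For task (iii), compose $\theta\mapsto\sigma_{\beta,\theta}$ with $\sigma\mapsto\text{\rm Ind}_{G(O_r,\beta)}^{G(O_r)}\sigma$, which is a bijection onto $\text{\rm Irr}(G(O_r)\mid\psi_{\beta})$ by subsection~\ref{subsec:clifford-theory-in-general}(2). The main obstacle is task (ii): one must handle the truncations modulo $\frak{p}_L^{er}$ with care --- verifying that $N_{L/L_+}(\gamma)=1$ cuts out the correct subgroup at level $r$ for both $r=2l$ and $r=2l-1$, that the apparent loss of anti-invariance of $x$ is harmless at level $l^{\prime}$, and that the restriction of the trace form $B$ on $\frak{so}(S_{\varepsilon})$ to the line through $\beta$ is computed correctly as $T_{L/F}(\,\cdot\,\beta)$ under the regular representation. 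The remaining steps are routine bookkeeping with the identifications already fixed in subsubsection~\ref{subsubsec:tamely-ramified-extension-and-so-2n}.
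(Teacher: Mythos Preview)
Your proposal is correct and follows essentially the same approach as the paper: both derive the theorem by applying Theorem~\ref{th:main-result} after verifying its hypotheses via Proposition~\ref{prop:so(s)-is-smooth-with-condition-i-ii-iii}, Proposition~\ref{prop:generator-of-tame-extension-is-smoothly-regular}, and Proposition~\ref{tprop:sufficient-condition-for-smooth-commutativeness-of-g-beta}, and then identifying the abstract parameter set with the concrete one through the descriptions of $G_{\beta}(O_r)$, $G_{\beta}(O_r)\cap K_l(O_r)$, and $\psi_{\beta}$ already recorded in subsubsection~\ref{subsubsec:tamely-ramified-extension-and-so-2n}. You supply somewhat more detail in task~(ii) (the norm computation, the anti-invariance of $x$ modulo $\frak{p}^{l^{\prime}}$, and the trace identity $T_{L/F}(x_+\beta)=0$) than the paper, which simply asserts these identifications and then writes ``Then Theorem~\ref{th:main-result} gives''.
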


\subsubsection[]{Case of odd variables}
\label{subsubsec:tamely-ramified-extension-and-so-2n+1}
Let us consider the case of $m=2n+1$ being odd. 

Take a $\beta\in\frak{g}(O)$ such that the characteristic polynomial
of $\overline\beta\in M_m(\Bbb F)$ is its minimal polynomial. Then 
$\beta\in\frak{g}(O)$ is smoothly regular with respect to $G=SO(S)$
over $O$ by the remark in subsubsection 
\ref{subsubsec:smooth-regularity-for-so-an+1}, and the fibers 
$G_{\beta}{\otimes}_OK$ ($K=\Bbb F, F$) are connected. So 
$G_{\beta}$ is smooth over $O$ and Theorem \ref{th:main-result} is
applicable. 

Proposition \ref{prop:generator-of-tame-extension-is-smoothly-regular}
gives examples of such $\beta\in\frak{g}(O)$. 

Use the notations of
the preceding subsubsection. Take a 
$\eta\in O^{\times}$ and define a $F$-quadratic form
$S_{\varepsilon,\eta}$ on the $F$-vector space $L\times F$ by
$$
 S_{\varepsilon,\eta}((x,s),(y,t))
 =S_{\varepsilon}(x,y)+\eta\cdot st.
$$
Then the $O$-group scheme $G=SO(S_{\varepsilon,\eta})$ and its Lie
algebra $\frak{g}=\frak{so}(S_{\varepsilon,\eta})$ is defined by
$$
 G(R)=\left\{g\in SL_R((O_L\times O){\otimes}R)\biggm|
                    \begin{array}{l}
                    S_{\varepsilon,\eta}(ug,vg)
                         =S_{\varepsilon,\eta}(u,v)
                    \;\;\text{\rm for}\\
                    \text{\rm $\forall u,v\in(O_L\times O){\otimes}_OR$}
                    \end{array}\right\}
$$
and
$$
 \frak{g}(R)
  =\left\{X\in\text{\rm End}_R((O_L\times O){\otimes}R)\biggm|
           \begin{array}{l}
            S_{\varepsilon,\eta}(uX,v)+S_{\varepsilon,\eta}(u,vX)=0\\
            \text{\rm for $\forall u,v\in(O_L\times O){\otimes}_OR$}
           \end{array}\right\}
$$
for all $O$-algebra $R$. An element 
$X\in\text{\rm End}_R((O_L\times O){\otimes}_OR)$ is denoted by
$$
 X=\begin{bmatrix}
    A&B\\
    C&D
   \end{bmatrix}\;\text{\rm with}\;
 \begin{cases}
  A\in\text{\rm End}_R(O_L{\otimes}_OR),
    &B\in\text{\rm Hom}_R(O_L{\otimes}_OR,R),\\
  C\in\text{\rm Hom}_R(R,O_L{\otimes}_OR),
    &D\in\text{\rm End}_R(R)=R.
 \end{cases}
$$
Take an $\alpha\in O_L$ such that $O_L=O[\alpha]$ and 
$\alpha^{\rho}+\alpha=0$. Identify $\alpha\in O_L$
with the endomorphism 
$[x\mapsto x\alpha]\in \text{\rm End}_O(O_L)$. Then
$$
 \beta=\begin{bmatrix}
        \alpha&0\\
          0   &0
       \end{bmatrix}\in\frak{g}(O)
$$
and the characteristic polynomial of 
$\overline\beta=\beta\npmod{\frak{p}}
 \in\text{\rm End}_{\Bbb F}(\Bbb L\times\Bbb F)$ 
is its minimal polynomial by Proposition 
\ref{prop:generator-of-tame-extension-is-smoothly-regular}. 
In this case we have
$$
 G_{\beta}(O_r)
  =\left\{\begin{bmatrix}
           \gamma\npmod{\frak{p}_L^{er}}&0\\
           0&1
          \end{bmatrix}\biggm|\gamma\in U_{L/L_+}\right\}
$$
where
$$
 U_{L/L_+}=\{\varepsilon\in O_L^{\times}\mid
               N_{L/L_+}(\varepsilon)=1\}.
$$
We have also 
$\psi_{\beta}(h)=\tau\left(\varpi^{-l^{\prime}}T_{L/F}(\alpha x)\right)$
for all
$$
 h=\begin{bmatrix}
    1+\varpi^lx\npmod{\frak{p}_L^r}&0\\
    0&1
   \end{bmatrix}
 \in K_l(O_r)\cap G_{\beta}(O_r).
$$
Then Theorem \ref{th:main-result} gives 

\begin{thm}\label{thm:shintani-gerardin-type-parametrization-for-odd-so}
There exists a bijection 
$\theta\mapsto
 \text{\rm Ind}_{G(O_r,\beta)}^{G(O_r)}\sigma_{\beta,\theta}$ of the
 set 
$$
 \left\{\begin{array}{l}
        \theta:U_{L/L_+}\to\left(O_L/\frak{p}_L^{er}\right)^{\times}
               \to\Bbb C^{\times}:\text{\rm group homomorphism}\\
   \hphantom{\theta:}
        \text{\rm s.t. 
  $\theta(\gamma)=\tau(\varpi^{-l^{\prime}}T_{L/F}(\alpha x))$}\\
   \hphantom{\theta:s.t.}
        \text{\rm for 
   $\forall\gamma\in U_{L/L_+},
          \gamma\equiv 1+\varpi^lx\npmod{\frak{p}_L^{er}},\;x\in O_L$}
         \end{array}\right\}
$$
onto $\text{\rm Irr}(G(O_r)\mid\psi_{\beta})$.
\end{thm}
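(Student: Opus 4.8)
The plan is to obtain the statement as a direct application of Theorem \ref{th:main-result} to $G=SO(S_{\varepsilon,\eta})$ and $\beta=\begin{bmatrix}\alpha&0\\0&0\end{bmatrix}\in\frak{g}(O)$, so that the work splits into (i) verifying that this $\beta$ satisfies the two hypotheses of that theorem, and (ii) rewriting the abstract parameter set of Theorem \ref{th:main-result} in the concrete arithmetic terms of the statement --- the latter being essentially the content of the computations already displayed in subsubsection \ref{subsubsec:tamely-ramified-extension-and-so-2n+1}.

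First I would record the standing hypotheses on $G$: smoothness over $O$ and conditions I), II), III) of subsection \ref{subsec:fundamental-setting} are Proposition \ref{prop:so(s)-is-smooth-with-condition-i-ii-iii}, and the fibres $G\otimes_OK$ are the noncommutative groups $SO_{2n+1}$ (here $n\ge 1$ since $[L:F]=2n\ge 2$). For hypothesis (2) of Theorem \ref{th:main-result} I would first note that $\alpha$ is a unit of $O_L$: because $L/L_+$ is unramified of degree $2$, the residue extension $\Bbb L/\Bbb L_+$ is nontrivial, so $L/F$ is not totally ramified, and then $O_L=O[\alpha]$ forces, via Proposition \ref{prop:generator-of-tame-extension-is-smoothly-regular}, that the ``constant term'' of $\alpha$ be a unit of the maximal unramified subring, whence $\overline\alpha$ is invertible. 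Consequently the characteristic polynomial of $\overline\beta$ equals $t\cdot\chi_{\overline\alpha}(t)$ with $t$ coprime to $\chi_{\overline\alpha}(t)$, and since $\chi_{\overline\alpha}(t)\npmod{\frak p}$ is the minimal polynomial of $\overline\alpha$ (again Proposition \ref{prop:generator-of-tame-extension-is-smoothly-regular}), $\chi_{\overline\beta}(t)$ is the minimal polynomial of $\overline\beta\in M_{2n+1}(\Bbb F)$. For hypothesis (1), the smoothness of $G_\beta$ over $O$, I would invoke the analysis of subsubsection \ref{subsubsec:smooth-regularity-for-so-an+1}: the semisimple part of $\overline\beta$ is conjugate to $\mathrm{diag}(A,0,-{}^\tau A)$ with the lone eigenvalue $0$ sitting on the one-dimensional block, so the factor attached to $a_r=0$ is $SO_1$, which is trivial; the only numerical condition, that $\mathrm{char}\,\Bbb F$ be prime to the Jordan block sizes, holds because those sizes are the ramification index $e$ of $L/F$ (prime to the residue characteristic, as $L/F$ is tame) and $1$. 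Hence $\overline\beta$ --- and likewise $\beta$ over $F$ --- is smoothly regular with connected centralizer fibres, and Proposition \ref{tprop:sufficient-condition-for-smooth-commutativeness-of-g-beta} gives that $G_\beta$ is commutative and smooth over $O$.

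With both hypotheses verified, Theorem \ref{th:main-result} yields a bijection $\theta\mapsto\mathrm{Ind}_{G(O_r,\beta)}^{G(O_r)}\sigma_{\beta,\theta}$ from the set of $\theta\in G_\beta(O_r)\sphat$ with $\theta=\psi_\beta$ on $G_\beta(O_r)\cap K_l(O_r)$ onto $\mathrm{Irr}(G(O_r)\mid\psi_\beta)$, and it remains to identify the source. Since $\chi_{\overline\beta}$ is its own minimal polynomial, the centralizer of $\beta$ in $M_m(O_r)$ is $O_r[\beta_r]$ (subsection \ref{subsec:detailed-description-of-the-case-gl-n}), which in the present coordinates is the block-diagonal ring: multiplications by $O_L/\frak{p}_L^{er}$ on the first block, and $O_r$ on the one-dimensional block. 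Intersecting with $SO(S_{\varepsilon,\eta})$ one finds, exactly as recorded before the statement, $G_\beta(O_r)=\{\mathrm{diag}(\gamma\npmod{\frak{p}_L^{er}},1)\mid\gamma\in U_{L/L_+}\}$, the scalar $1$ on the anisotropic line being forced because $F$ is non-dyadic (the same argument as in the proof of Proposition \ref{prop:so(s)-is-smooth-with-condition-i-ii-iii}). Under this identification a character of $G_\beta(O_r)$ is a homomorphism $U_{L/L_+}\to\Bbb C^\times$ factoring through $U_{L/L_+}\to(O_L/\frak{p}_L^{er})^\times$; the subgroup $G_\beta(O_r)\cap K_l(O_r)$ corresponds to those $\gamma\in U_{L/L_+}$ with $\gamma\equiv 1+\varpi^lx\npmod{\frak{p}_L^{er}}$ for some $x\in O_L$; and on it $\psi_\beta$ is the character $\gamma\mapsto\tau\!\left(\varpi^{-l^{\prime}}T_{L/F}(\alpha x)\right)$ computed there. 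Substituting these facts into the source of the Theorem \ref{th:main-result}-bijection produces precisely the displayed set, which completes the argument.

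I expect the only genuinely delicate step to be the explicit determination of $G_\beta$ over $O$ --- namely, checking that an $S_{\varepsilon,\eta}$-isometry lying in the commutative algebra $O[\beta]$ must act as the identity on the anisotropic line and that its restriction to $O_L$ is multiplication by a norm-one unit --- together with the correct matching of the $\frak{p}$-adic and $\frak{p}_L$-adic normalizations in the formula for $\psi_\beta$; all of this is carried out in subsubsection \ref{subsubsec:tamely-ramified-extension-and-so-2n+1}, so in the write-up I would cite those computations rather than reproduce them. The Clifford-theoretic and Weil-representation input is entirely absorbed into Theorem \ref{th:main-result}, and the verification that $\alpha$ is a unit, while essential for hypothesis (2), is routine.
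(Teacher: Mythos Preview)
Your proposal is correct and follows the same route as the paper: the theorem is obtained by applying Theorem \ref{th:main-result} once the hypotheses on $\beta$ are verified and $G_\beta(O_r)$, $G_\beta(O_r)\cap K_l(O_r)$, $\psi_\beta$ are identified with the arithmetic data of $L/L_+$, all of which is carried out in subsubsection \ref{subsubsec:tamely-ramified-extension-and-so-2n+1}. Your write-up is more explicit than the paper's (which simply says ``Then Theorem \ref{th:main-result} gives''), in particular your observation that $\overline\alpha$ is a unit --- needed so that $t$ and $\chi_{\overline\alpha}(t)$ are coprime and hence $\chi_{\overline\beta}$ is the minimal polynomial --- is a detail the paper leaves implicit.
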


%\appendix
%\input{hensel.tex}
%\section*{References}

%\end{linenumbers}

\begin{flushleft}
Sendai 980-0845, Japan\\
Miyagi University of Education, 
Department of Mathematics\\
e-mail : k-taka2@ipc.miyakyo-u.ac.jp
\end{flushleft}         
\end{document}